\newtheorem{theorem}{Theorem}
\newtheorem{lemma}[theorem]{Lemma}
\newtheorem{claim}[theorem]{Claim}
\newtheorem{proposition}[theorem]{Proposition}
\theoremstyle{definition}
\newtheorem{remark}[theorem]{Remark}
\newtheorem{definition}[theorem]{Definition}
\newtheorem{example}[theorem]{Example}
\g@addto@macro\@verbatim{\microtypesetup{activate=false}}\makeatother%
\title[Cohomology of the Moduli of Differentials ]{Cohomology of Moduli Space of Multiscale Differentials in genus 0}
\author{Prabhat Devkota}
\begin{document}
\maketitle
\begin{abstract}
  We prove that the rational cohomology ring of moduli space of multiscale differentials in genus 0 is generated by the classes of boundary divisors. The main idea is the technique of the Chow-K\"unneth generation Property and the observation that the intersection of a collection of boundary divisors in the moduli space is irreducible. We observe that the relations between the boundary strata in cohomology are generated by the pullback of the WDVV relations and the relations between the torus-invariant subvarieties in the fiber over $\overline{M}_{0,n}$. We also characterize the cases in which the moduli space is a smooth variety, and in these cases, we prove that the integral cohomology ring is generated by the boundary divisors.
\end{abstract}
\section*{Introduction}
Given an $n$-tuple of integers $\mu=(m_1,\ldots,m_n)$ with $\sum_im_i=2g-2$, the moduli space $\Omega\mathcal{M}_{g,n}(\mu)$ parametrizes abelian differentials $\omega$ on an $n$-pointed compact Riemann surface $(C;p_1,\ldots,p_n)$ of genus $g$ with prescribed zeros (or poles) of order $m_i$ at $p_i$. This moduli space admits a $\mathbb{C}^*$ action via rescaling the differentials; the quotient, which will also be called the moduli space of abelian differentials, denoted by $\mathcal{B}\coloneqq\mathbb{P}\Omega\mathcal{M}_{g,n}(\mu)$, admits a modular compactification $\mathbb{P}\Xi\overline{\mathcal{M}}_{g,n}(\mu)$ which is a smooth Deligne-Mumford stack with normal crossing boundary divisor, parametrizing ``multiscale differentials'' on stable nodal curves of arithmetic genus $g$ (see \cite{bcggm}). Denote by $\overline{\mathcal{B}}\coloneq \mathbb{P}\Xi\overline{\mathcal{M}}_{g,n}(\mu)$ this moduli stack of multiscale differentials. Its coarse moduli space, denoted $\overline{B}$, is proven in \cite{ccm22} to be a projective variety (\cite{cghms22} gives another proof of the projectivity). Our objective is to compute the cohomology with rational coefficients of the moduli space of multiscale differentials of genus zero:
\begin{theorem}\label{main-thm}
  The rational Chow and cohomology rings of the moduli space $\overline{B}$ of multiscale differentials in genus zero are isomorphic and generated by the boundary divisors.
\end{theorem}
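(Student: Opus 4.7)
The plan is to transfer generation by boundary divisors from $\overline{M}_{0,n}$ to $\overline{B}$ along the forgetful morphism $\pi : \overline{B} \to \overline{M}_{0,n}$, using the Chow-K\"unneth generation property (CKgP). Over the interior $\mathcal{M}_{0,n}$ the map $\pi$ is an isomorphism, because a meromorphic differential on $\mathbb{P}^1$ with a fixed zero/pole divisor is unique up to a global scalar, so all non-trivial fiber information of $\pi$ is concentrated on the boundary and is essentially combinatorial: the choice of a level structure and a scaling on each irreducible component of a stable nodal curve.

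First I refine the boundary stratification of $\overline{M}_{0,n}$ to the level-graph stratification of $\overline{B}$. Strata are indexed by enhanced level graphs $\Lambda$, and the stratum labeled by $\Lambda$ is sent by $\pi$ onto the $\overline{M}_{0,n}$-stratum labeled by the underlying dual graph; the fiber is cut out by the remaining level and scaling data. I expect each such stratum to be (a quotient of) a toric fibration over the corresponding stratum of $\overline{M}_{0,n}$, so that it is built out of objects whose Chow theory is completely understood.

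Next I establish the two key inputs. The first, irreducibility of every intersection of boundary divisors of $\overline{B}$, is the observation highlighted in the abstract; it reduces the identification of a stratum closure with a monomial in boundary divisor classes to a combinatorial intersection computation. The second is CKgP for $\overline{B}$: since $\overline{M}_{0,n}$ satisfies CKgP by Keel's theorem and toric varieties satisfy CKgP because their Chow rings are generated by torus-invariant divisors, a stratum-wise excision argument should push CKgP from the strata to all of $\overline{B}$ and, as a byproduct, yield the asserted isomorphism between the rational Chow and cohomology rings. Combining the two inputs, stratum closures generate the Chow ring and each stratum closure is an intersection of boundary divisors, so boundary divisors alone generate.

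For the relations, the plan is to show that every relation among stratum classes splits into a \emph{horizontal} part pulled back from $\overline{M}_{0,n}$, where it is a consequence of the WDVV relations via Keel's presentation, and a \emph{vertical} part supported on a fiber, where it is a consequence of the standard relations among torus-invariant divisors on the toric fiber. The main obstacle I anticipate is making the description of each stratum as a toric fibration sufficiently precise and compatible with the ambient stratification, in particular keeping track of the finite quotient involved in passing from the smooth DM stack $\mathbb{P}\Xi\overline{\mathcal{M}}_{0,n}(\mu)$ to its coarse space $\overline{B}$ and verifying that it is invisible to rational Chow groups. The irreducibility of arbitrary intersections of boundary divisors is a close second, requiring a careful combinatorial analysis of how enhanced level graphs are glued along their codimension-one degenerations.
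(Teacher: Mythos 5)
Your plan follows essentially the same route as the paper: stratify $\overline{B}$ by open level-graph strata, show each such stratum is finitely covered by (a boundary stratum of $\overline{M}_{0,n}$) times a torus so that CKgP propagates stratum-by-stratum and yields the rational Chow--cohomology isomorphism, and then use irreducibility of intersections of boundary divisors to identify stratum closures with intersections of boundary divisors. The one point to tighten is that ``stratum closures generate the Chow ring'' is not a byproduct of CKgP itself but of the fact that each open stratum has trivial rational Chow ring (being a finite image of an open subset of affine space times a torus) combined with the excision/localization sequence, which the paper packages as an induction showing the Chow ring is tautological before invoking irreducibility.
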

One technique we use is the \textit{Chow-K\"unneth generation Property (CKgP)}. This property has been recently used in \cite{cl22} and \cite{clp23} for $\overline{M}_{g,n}$ with considerable success, and in this article, we further explore the geometry of the multiscale compactification and adapt the Chow-K\"unneth generation Property to the case of the moduli space of abelian differentials in genus 0.

In the case $\mu=(0^{n-1},-2)$, the moduli space of multiscale differentials is a smooth projective {\em variety}. In this case, we will prove the stronger statement for the {\em integral} cohomology: the Chow ring and cohomology ring with integral coefficients are isomorphic and are generated by boundary divisors. The CKgP does not work for integral cohomology, so we use a different method. The idea is to factor the birational morphism $\overline{B}\rightarrow\overline{M}_{0,n}$ into a composition of blowups along smooth irreducible centers and then use the formula for the cohomology (or Chow ring) of a regular blowup. The idea is motivated by the approach used in \cite{keel} to determine the cohomology ring of $\overline{M}_{0,n}$.

Besides $\mu=(0^{n-1},-2)$, there are other cases when $\overline{B}$ is a smooth variety. When $n\geq 7$, we have:
\begin{proposition}
  If $n\geq 7$ then $\overline{B}$ is a smooth variety if and only if $\mu=(0^{n-1},-2)$ or $(0^{n-2},-1^2)$.
\end{proposition}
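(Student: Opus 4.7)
The plan is to analyze the local structure of the smooth Deligne--Mumford stack $\overline{\mathcal{B}}$ at each boundary stratum and apply Chevalley--Shephard--Todd: the coarse space $\overline{B}$ is smooth at a given point if and only if its inertia acts on the local tangent space as a group generated by pseudo-reflections.

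For the ``if'' direction, suppose $\mu=(0^{n-1},-2)$ or $(0^{n-2},-1^2)$, so every $m_i\in\{-2,-1,0\}$. A direct count of orders on each irreducible component of a stable multiscale degeneration shows that the pole order on the lower side of every vertical node is forced to equal $2$: on a lower component $V$, the condition $\sum_{i\in V}m_i-c=-2$ together with the bounds on the $m_i$ forces $c=2$, so the prong count is $1$ and the associated level rotation is trivial. Thus $\overline{\mathcal{B}}$ is representable and $\overline{B}\to\overline{M}_{0,n}$ is, as foreshadowed in the introduction for the first case and by a parallel argument for the second, an iterated blowup along smooth centers, giving a smooth variety.

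For the ``only if'' direction, I first observe that a non-exceptional $\mu$ must have some $m_i\geq 1$: if all $m_i\leq 0$ and some $m_j\leq -3$, then $\sum_{i\neq j}m_i=-2-m_j\geq 1$ forces a positive entry, a contradiction; so every $m_i\in\{-2,-1,0\}$ and the only partitions of $-2$ into these parts are the two exceptional signatures. Relabel so that $m_1\geq 1$, and set $\kappa\coloneq m_1+1\geq 2$. I would then construct a boundary stratum $Z\subset\overline{B}$ whose local transverse structure is a non-smooth cyclic quotient: place $p_1$ together with order-zero auxiliary markings on a deep component and arrange the remaining components so that two distinct vertical edges both carry prong count $\kappa$, choosing a graph topology in which the compatibility relations defining the level rotation group cut the naive $\mu_\kappa\times\mu_\kappa$ down to its \emph{diagonal} subgroup acting on transverse coordinates $(s_1,s_2)$ as $(\zeta s_1,\zeta s_2)$. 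Since this diagonal action is not a pseudo-reflection for $\kappa\geq 2$, Chevalley--Shephard--Todd then yields a cyclic quotient singularity along the image of $Z$.

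The main obstacle is producing the right graph uniformly in $\mu$. In a naive multi-level chain the level rotation decomposes as $\mu_{\kappa_1}\times\mu_{\kappa_2}$ with each factor acting on its own coordinate, so the quotient is smooth. Forcing a non-pseudo-reflective action requires exploiting a combinatorial coincidence among prong counts together with a graph topology that links two vertical edges. Verifying that such a stratum exists for every non-exceptional $\mu$ when $n\geq 7$, and checking that all components of the degenerate curve remain stable, is the technical heart of the argument; the hypothesis $n\geq 7$ also rules out sporadic smooth signatures that appear for small $n$ and must be treated separately.
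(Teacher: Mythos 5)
Your ``if'' direction is essentially the paper's argument: for $\mu=(0^{n-1},-2)$ or $(0^{n-2},-1^2)$ the degree count on each lower-level component forces every vertical edge to have enhancement $\kappa_e=1$, hence one prong, so the twist group equals the simple twist group, the ghost automorphisms vanish, and since $\mathrm{Aut}(C,\omega)$ is trivial in genus $0$ the coarse space is smooth; the detour through representability and an iterated-blowup structure is not needed for this conclusion. Your opening reduction in the other direction (a non-exceptional $\mu$ must contain some $m_i\geq 1$) is also correct.

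The ``only if'' direction, however, has a genuine gap, and you flag it yourself: producing, for \emph{every} non-exceptional $\mu$ with $n\geq 7$, a boundary stratum whose stabilizer acts non-pseudo-reflectively is precisely the technical heart, and it is not carried out. In the paper this is done by an explicit combinatorial case analysis: by \cref{equal_enhancements} (resting on \cref{cherry_example}, i.e.\ \cite[Example 5.7]{ccm22}) it suffices to exhibit a \emph{realizable} and \emph{unbalanced} cherry or upside-down cherry, and the argument splits into the cases ``at least four $m_i\geq 0$'' and ``at most three $m_i\geq 0$'', writing down specific cherries, checking realizability via the inequalities of \cref{realizable}, and using the room provided by $n\geq 7$ (with separate handling of $n=7$ sub-cases). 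Nothing of that analysis appears in your proposal. Moreover, the mechanism you sketch is misdirected: two vertical edges carrying the \emph{same} prong count $\kappa$ contribute equally to $\mathrm{Tw}_\Gamma$ and to $\mathrm{Tw}^s_\Gamma$ and by themselves yield no ghost automorphism (a slanted cherry with equal enhancements $a=b$ has $|K_\Gamma|=b/\gcd(a,b)=1$, and a two-edge chain with equal enhancements likewise gives $M=M'$). The singularities in genus $0$ come from two edges with \emph{unequal} enhancements crossing different numbers of level passages, i.e.\ exactly from slanted unbalanced cherries; any configuration in which the level-rotation relations ``cut down to a diagonal'' action secretly requires such an inequality among enhancements elsewhere in the graph. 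So even as a blueprint your construction must be reoriented toward forcing unequal enhancements, and then the realizability case-check for arbitrary non-exceptional $\mu$ (the content of the paper's proof of \cref{smooth_geq7}) still remains to be done.
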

 However, for $n=5,6$, there are in fact additional exceptional profiles $\mu$ for which the coarse moduli space remains a smooth variety; see \cref{smooth_spaces} for a complete list of smooth cases (of course, for $n\leq 4$, $\overline{B}\cong\overline{M}_{0,n}$ is always a smooth variety). The ideas involved in the computation of the integral cohomology for $\mu=(0^{n-1},-2)$ should also apply in these other smooth cases to allow us to conclude that the integral cohomology ring (as well as integral Chow ring) is generated by boundary divisors.\\

 Using the birational morphism $\pi:\overline{B}\rightarrow\overline{M}_{0,n}$, we can already understand the rational Picard group of $\overline{B}$. Indeed, the interior of $\overline{B}$ is equal to $M_{0,n}$ which is a Zariski open subset of $\mathbb{C}^{n-3}$ and so has trivial Picard group. So, $\text{Pic}(\overline{B})\otimes\mathbb{Q}$ is generated by the boundary divisors. The exceptional divisors of $\pi$ are linearly independent with each other (and with the pullback of the Picard group of $\overline{M}_{0,n}$), so the rank of the Picard group of $\overline{B}$ is equal to the rank of $\text{Pic}(\overline{M}_{0,n})$ plus the number of divisors of $\overline{B}$ exceptional over $\overline{M}_{0,n}$. In particular, the linear relations between the boundary divisors of $\overline{B}$ are generated by the pullback of the linear relations between the boundary divisors of $\overline{M}_{0,n}$. On the other hand, while the pullback of the Witten-Dijkgraaf-Verlinde-Verlinde (WDVV) relations are not sufficient to generate all the relations between the higher codimension boundary strata in $\overline{B}$, we will prove:
 \begin{theorem}\label{relations}
   The vector space of relations between the codimension $p$ boundary strata in the cohomology group $H^{2p}(\overline{B},\mathbb{Q})$ is $\mathbb{Q}$-linearly generated by the pullback of the WDVV relations from $\overline{M}_{0,n}$ and relations between the torus invariant subvarieties in an irreducible component of a fiber of the birational morphism $\pi:\overline{B}\rightarrow\overline{M}_{0,n}$.
 \end{theorem}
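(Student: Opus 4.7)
The plan is to analyze the kernel of the pushforward $\pi_*$ on the $\mathbb{Q}$-subspace of $H^{2p}(\overline{B},\mathbb{Q})$ spanned by codimension $p$ boundary strata, and then to control what remains by exploiting the toric structure of the fibers of $\pi$. A codimension $p$ boundary stratum $Z \subset \overline{B}$ is indexed by an enhanced level graph $\widetilde{\Gamma}$; its image under $\pi$ is the boundary stratum $D_{\Gamma(Z)} \subset \overline{M}_{0,n}$ attached to the underlying stable graph, and $\pi|_Z$ is generically finite of some degree $d_Z$ precisely when the level structure on $\widetilde{\Gamma}$ is trivial. Thus $\pi_*[Z] = d_Z\,[D_{\Gamma(Z)}]$ in that case and $\pi_*[Z] = 0$ whenever $\widetilde{\Gamma}$ has a nontrivial level structure.

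Given a relation $\sum_Z a_Z[Z]=0$ in $H^{2p}(\overline{B},\mathbb{Q})$, I would first push it forward along $\pi$ to obtain a relation among codimension $p$ boundary cycles in $H^{2p}(\overline{M}_{0,n},\mathbb{Q})$. By Keel's presentation of $H^*(\overline{M}_{0,n})$, this pushforward lies in the ideal generated by WDVV. Subtracting the $\pi^*$-pullback of a WDVV-generated relation with matching pushforward then reduces the problem to a relation supported entirely on \emph{exceptional} boundary strata, i.e.\ those with $\dim Z > \dim\pi(Z)$.

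Next I would peel off this exceptional remainder by descending induction on the codimension of the image stratum $D_\Gamma \subset \overline{M}_{0,n}$, handling deepest images first. Over the open part $D_\Gamma^{\circ}$ (the complement of all deeper boundary strata of $\overline{M}_{0,n}$), the preimage $\pi^{-1}(D_\Gamma^{\circ})$ admits, up to the finite étale cover recording prong matchings, a local product description $D_\Gamma^{\circ} \times F_\Gamma$, where each irreducible component of $F_\Gamma$ is a toric variety (one component for each enhanced level structure on $\Gamma$, as noted in the irreducibility observation from the abstract). Exceptional boundary strata mapping onto $D_\Gamma$ restrict to the product of $D_\Gamma^{\circ}$ with a torus-invariant subvariety of a component of $F_\Gamma$. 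By the Künneth formula on this open product, the restriction of the remaining exceptional relation to $\pi^{-1}(D_\Gamma^{\circ})$ splits, on each component, into a relation among torus-invariant subvarieties of the corresponding toric fiber, which is exactly a relation of the second type in the theorem.

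The hard part, and the step I would expect to require the most care, is promoting these open, fiberwise relations back to global classes on $\overline{B}$ without double-counting contributions from different image strata. My plan is to fix canonical lifts, via the étale local product description, of each toric relation to a relation on the closure $\pi^{-1}(D_\Gamma)$, and then to show that subtracting such lifts from the current exceptional relation produces a relation whose support is contained in the union of strictly deeper preimages $\pi^{-1}(D_{\Gamma'})$ with $D_{\Gamma'}\subsetneq D_\Gamma$. The inductive hypothesis then disposes of this deeper piece. Combining the WDVV pullbacks extracted in the first step with the toric fiber relations produced in the inductive step expresses the original relation as a $\mathbb{Q}$-linear combination of the two types claimed in the theorem.
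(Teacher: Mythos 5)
Your overall strategy (push forward along $\pi$, invoke Kontsevich--Manin/Keel to identify the pushed-forward relation with WDVV, then peel off the exceptional remainder stratum by stratum using the toric fiber structure) is genuinely different from the paper's argument, which never works element-by-element: the paper runs the weight spectral sequence of the filtration of $\overline{B}$ by unions of boundary strata in compactly supported cohomology, uses the purity of the mixed Hodge structure on the open strata $D_\Gamma^o$ (finite images of $\delta_\Gamma^o\times(\mathbb{C}^*)^r$, hence of hyperplane-arrangement-complement type) to get degeneration at $E_2$, and reads off the relation space in degree $2p$ as the image of $H^1(T_{p+1}\setminus T_p)=\bigoplus_\Gamma H^1(\delta_\Gamma^o)\oplus H^1((\mathbb{C}^*)^{r(\Gamma)})$, whose two summands give exactly the WDVV pullbacks and the toric-fiber relations. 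However, your proposal has genuine gaps at both of its key steps. First, the reduction ``subtract the $\pi^*$-pullback of a WDVV-generated relation with matching pushforward, leaving a relation supported on exceptional strata'' is not justified: for a codimension-$p$ stratum $\delta\subset\overline{M}_{0,n}$ the class $\pi^*[\delta]$ is not the class of $\pi^{-1}(\delta)$ (the preimage has components of smaller codimension over the exceptional locus, so excess-intersection corrections appear), and there is no argument that after expressing $\pi^*$ of the WDVV combination in terms of boundary strata of $\overline{B}$ the non-exceptional coefficients cancel term-by-term against the original $a_Z$; only the pushforwards agree, which is weaker. (Relatedly, your opening claim that $\pi_*[Z]=0$ whenever the level structure is nontrivial is false: a two-level graph with a single vertical node gives a vertical divisor mapping generically finitely onto a boundary divisor of $\overline{M}_{0,n}$; the correct dichotomy is $\dim Z>\dim\pi(Z)$, which you do use later.)

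Second, the step you yourself flag as the hard part --- promoting the fiberwise toric relations over $D_\Gamma^{\circ}$ to global relations on $\overline{B}$ and showing the difference is supported over strictly deeper strata --- is only a plan, and it is precisely the point where an element-wise argument needs a mechanism to control classes supported on the boundary of the closure: a formal combination that vanishes on $\pi^{-1}(D_\Gamma^{\circ})$ is a relation on $\overline{B}$ only modulo classes pushed forward from the deeper locus, and one must also rule out contributions from the odd cohomology of the open strata interfering with the descending induction. This bookkeeping is exactly what the paper's exact sequence
$H^1(T_{p+1}\setminus T_p)\rightarrow H^0(T_p\setminus T_{p-1})\rightarrow H^{2n-2p}(\overline{B})\rightarrow 0$
(obtained from degeneration of the spectral sequence, using the weight/purity computation for $H^*(D_\Gamma^o)$) accomplishes structurally; without it, or some substitute such as a careful Borel--Moore/Gysin long exact sequence argument at each stage of your induction together with the finite-cover description $\delta_\Gamma^o\times(\mathbb{C}^*)^r\rightarrow D_\Gamma^o$, your induction does not close. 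As written, the proposal identifies the right two sources of relations but does not prove that they generate.
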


We note that while \cite{cghms22} describes the birational morphism $\pi:\overline{B}\rightarrow\overline{M}_{0,n}$ as a blowup of an explicit ideal sheaf on $\overline{M}_{0,n}$, it is inadequate for the computation of cohomology because the blowup locus defined by the ideal sheaf is neither irreducible nor reduced (and the components are not equidimensional). Consequently, the usual formula for computation of the cohomology ring of a regular blowup is not applicable.
 
 The outline of this paper is as follows. In \cref{prelim}, we will summarize the basic facts about the structure of the moduli space of multiscale differentials, that will be relevant throughout the paper, and also collect some consequences of Chow-K\"unneth generation Property that will be relevant in our argument. In \cref{irred_bdry}, we will prove that an intersection of a collection of boundary divisors, if non-empty, is irreducible. This fact will be crucial to prove that the boundary divisors will generate the cohomology ring, once we know that the cohomology ring is tautological. In \cref{rational_cohomology}, we will prove \cref{main-thm} as well as \cref{relations}. Then in \cref{smooth_spaces}, we will determine all the cases for which the coarse moduli space $\overline{B}$ is a smooth variety. That for the smooth cases, the integral cohomology is generated by the boundary divisors will be discussed in \cref{integral_cohomology}.
 
\subsection*{Acknowledgements}
I would like to thank my advisor Samuel Grushevsky for continuous encouragement and helpful advice as well as regular fruitful discussions. I would also like to thank Samir Canning, Myeongjae Lee and Martin M\"oller for fruitful discussions. Especially utmost thanks to Samir Canning for providing ideas to determine the relations between the boundary strata in \cref{relations_cohomology} and to Myeongjae Lee for providing the idea of the proof of \cref{prongs_unique}.
\section{Preliminaries}\label{prelim}
In this section, we collect some general facts about the moduli space of multiscale differentials, especially in regards to its boundary, as well as a review of the Chow-K\"unneth generation Property (CKgP) that will be used in this paper. For more details on the moduli space of multiscale differentials and the structure of its boundary, we refer to \cite{bcggm} and \cite{cmz22}, whereas for more details on the CKgP, we refer to \cite{cl22} and \cite{clp23}.
\subsection{Enhanced level graphs and multiscale differentials}
Given a point in the boundary $\partial\overline{B}\coloneq\overline{B}\setminus B$, there is an enhanced level graph compatible with that point (and the boundary is stratified by strata corresponding to such enhanced level graphs). An enhanced level graph $(\Gamma,\ell,\{\kappa_e\})$ comprises the data of:
\begin{itemize}
\item A connected stable graph $\Gamma=(V,E,H)$, with vertices $V$, edges $E$ and half edges $H$. Each vertex is further assigned with a non-negative integer, called the genus of the vertex. The vertices $V$ correspond to the irreducible components of the stable curve $C$, the edges $E$ correspond to the nodes of the curve $C$ and the half edges $H$ either correspond to the marked points (such half edges are also called legs) or pair up with some other half edge at an adjacent vertex to form an edge. The stability of the graph then corresponds to the stability of the pointed curve $(C;p_1,\ldots,p_n)$.
\item A total ordering on the vertices $V$ of $\Gamma$, where equality is permitted. This gives us the level structure on $\Gamma$. We can encode this by a level function $\ell:V\rightarrow\mathbb{Z}$ and for convenience, we assume that the image of this function is a set of consecutive integers with the largest (top level) being 0.
\item An assignment of a non-negative integer $\kappa_e$ to each edge of $\Gamma$, such that $\kappa_e=0$ if and only if the edge connects two vertices on the same level (such an edge is called a horizontal edge).
\end{itemize}
For brevity, we will frequently denote the enhanced level graph simply by $\Gamma$ whenever the confusion with the underlying dual graph does not arise. By a {\em leaf} of a dual graph, we will refer to an extremal vertex -- that is, a vertex of valency one. Note that, if a dual graph is a tree, it will always have a leaf. Similarly, by a \textit{rooted level tree}, we will refer to a level graph (whose dual graph is a tree) with a unique top or bottom level vertex. In this case, the unique vertex in the top (or the bottom) level will be called the \textit{root}.\\

Next, we discuss the twisted differentials and what it means for them to be compatible with an enhanced level graph. A twisted differential of type $\mu=(m_1,\ldots,m_n)$ on a stable curve $C$ with dual graph $\Gamma$ compatible with an enhanced level structure $(\Gamma,\ell,\{\kappa_e\})$ is a collection of differentials $\{\omega_v\}_{v\in V(\Gamma)}$ (where $\omega_v$ is a meromorphic differential on a component $C_v$ of $C$) with zeros and poles of order as prescribed by $\mu$ and the enhancements. To elucidate further, the differential $\omega_v$ will have a zero/pole on the half edges. If the half edge represents a marked point $p_i$ then $\omega_v$ has zero/pole of order $m_i$ at $p_i$. If the half edge pairs with another to form a horizontal edge $e$ then $\omega_v$ has a simple pole at the point in $C_v$ corresponding to the half edge, with residue opposite to that of the half edge it is paired with. And if the half edge pairs with another to form a vertical edge $e$, and the vertex $v$ lies on the lower level, then $\omega_v$ has a pole of order $-\kappa_e-1$ at the corresponding point in $C_v$, whereas if the vertex lies on the upper level then it has a zero of order $\kappa_e-1$. Furthermore, the differentials also satisfy an additional condition called the Global Residue Condition (GRC for brevity). Since GRC is automatically satisfied in genus 0, we simply refer to \cite{bcggm} for the details.

If a meromorphic differential $\omega$ on a curve $C$ (say smooth and irreducible) has a zero of order $m\geq 0$ at a point $p$ then locally around $p$, there exists a local coordinate $z$ in which it can be written as $\omega=z^mdz$. In this local chart, we have $\kappa=m+1$ outgoing horizontal prongs $\zeta^i_\kappa\frac{\partial}{\partial z}$ (where $\zeta_\kappa$ is the $\kappa$-th primitive root of unity). Analogously, when $\omega$ has a pole of order $m\leq -2$ at $p$, we have $\kappa=-m-1$ incoming horizontal prongs $-\zeta^i_\kappa\frac{\partial}{\partial z}$, where $z$ is a standard coordinate around $p$ in which the differential can be written as $\omega=(z^{m+1}+r)\frac{dz}{z}$ (where $r=\text{Res}_p\omega$). If $\omega$ is a twisted differential on a stable curve $(C;p_1,\ldots,p_n)$ and a vertical edge $e$ with enhancement $\kappa_e$ connects two components $C_1$ and $C_2$ (with $C_1$ at higher level), for the $\omega$ to yield a well-defined multiscale differential, we need to have additional datum of ``prongs-matchings'' between the set of horizontal prongs of $C_1$ and $C_2$. A prong-matching is a cyclic order reversing bijection between the outgoing prongs of $C_1$ and incoming prongs of $C_2$.

Thus, a multiscale differential of type $\mu=(m_1,\ldots,m_n)$ on a stable curve $(C;p_1,\ldots,p_n)$ comprises of the following data:
\begin{itemize}
\item An enhanced level structure on the dual graph $\Gamma$ of $(C;p_1,\ldots,p_n)$.
\item A collection of differentials $\{\omega_v\}_{v\in V(\Gamma)}$, compatible with the enhanced level structure of $\Gamma$.
\item Prong-matchings on each vertical edge.
\end{itemize}

The multiscale differentials only retain information on the lower level up to projectivization, so we have to consider the action by rescaling level-by-level of a multiplicative torus, called the level rotation torus $T_\Gamma$, isogeneous to $(\mathbb{C}^*)^{L(\Gamma)}$, on the space of twisted differentials compatible with the enhanced level graph $\Gamma$ (here, $L(\Gamma)$ is the number of levels below the top). This action also has the effect of twisting the prong matchings on the vertical edges. The moduli space of multiscale differentials then parametrizes the equivalence classes of multiscale differentials under such action. We refer to \cite{bcggm} or \cite{cmz22} for details and recap what we will need about the equivalence classes of prong matchings:

Given an enhanced level graph $(\Gamma,\ell,\{\kappa_e\})$, there are altogether $\prod_e\kappa_e$ possible choices of prong matchings on a compatible multiscale differential. Then understanding the equivalence classes of prong matchings amounts to understanding the equivalence classes under the rotation of the prongs by the level rotation group $R_\Gamma\cong \mathbb{Z}^{L(\Gamma)}$ whose $i^{\text{th}}$ factor acts on the $i^{\text{th}}$ level passage (that is, all the edges that cross the virtual horizontal line right above the level $-i$) by diagonally turning in a fixed direction the the prong-matchings at each edge crossing this level passage.

We will frequently use the term ``codimension of a level graph'' to mean the number of levels below the top (that is, the number of level passages) plus the number of horizontal edges. Indeed, the codimension of a level graph coincides with the codimension of the associated boundary stratum. In particular, a level graph of codimension one is called ``divisorial''. We will also call a boundary stratum given by a divisorial level graph with only one horizontal edge a ``horizontal divisor'' and those with no horizontal edge a ``vertical divisor''.

\begin{remark}\label{enhancements_on_tree}
  In genus zero (or more generally if the dual graph $\Gamma$ is a tree), the dual graph determines uniquely the enhancements on all the edges (but, in general, the level structure is not fully determined -- we can only recover a partial order on the vertices of the dual graph). Indeed, using the fact that the degree of a meromorphic differential on each component of genus $g_v$ is $2g_v-2$, we can start by determining the enhancement on an edge connecting to a leaf of the tree, and then use induction on the number of vertices.
  \end{remark}

\subsection{The structure of the boundary of the moduli space of multiscale differentials}
As mentioned before, the boundary $\partial\overline{B}$ of $\overline{B}$ is stratified into the loci of multiscale differentials compatible with various enhanced level graphs. Suppose $\Lambda$ is an enhanced level graph with $L$ level passages and denote by $D_\Lambda\subset\overline{B}$ the closure of the locus of multiscale differentials compatible with $\Lambda$. Then for each level $i$, the tuple $(\bm{g}^{[i]},\bm{n}^{[i]},\bm{\mu}^{[i]})$ consisting of genera of the components at level $i$, the marked points (legs) at level $i$ and the orders of zeros and poles on the half edges at level $i$, along with the residue conditions $\bm{\mathfrak{R}}^{[i]}$ imposed on some of the half legs, defines the generalized stratum $B_\Lambda^{[i]}\coloneqq \mathbb{P}\Xi\overline{\mathcal{M}}_{\bm{g}^{[i]},\bm{n}^{[i]}}^{\bm{\mathfrak{R}}^{[i]}}(\bm{\mu}^{[i]})$. We refer to \cite[sec. 4]{cmz22}, for details on the generalized strata. What will be important for us in the further discussion is the following:
\begin{proposition}\cite[Prop. 4.4]{cmz22}\label{product_stratum}
  There exists a stack $D_\Lambda^s$, called the simple boundary stratum of type $\Lambda$, that admits finite morphisms $c_\Lambda:D_\Lambda^s\rightarrow D_\Lambda$ and $p_\Lambda:D_\Lambda^s\rightarrow B_\Lambda\coloneqq\prod_iB_\Lambda^{[i]}$.
\end{proposition}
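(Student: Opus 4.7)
The plan is to define $D_\Lambda^s$ as a rigidification of $D_\Lambda$ that keeps track of an actual prong-matching at each vertical edge, rather than just its equivalence class under the level rotation group, so that $D_\Lambda^s$ naturally sits between $D_\Lambda$ and $B_\Lambda$. Concretely, an $S$-point of $D_\Lambda^s$ would be a family $(\mathcal{C}/S, \{\omega_v\}, \pi)$ of multiscale differentials of type $\Lambda$ together with a chosen prong-matching $\pi$ at every vertical edge, taken modulo only the continuous part of the level rotation torus $T_\Lambda$ acting simultaneously on $\{\omega_v\}$ and $\pi$, but not modulo the discrete level rotation group $R_\Lambda \cong \mathbb{Z}^{L(\Lambda)}$ which rotates only $\pi$.

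With this definition, I would define $c_\Lambda: D_\Lambda^s \to D_\Lambda$ as the forgetful map that passes to the $R_\Lambda$-orbit of $\pi$. Since $R_\Lambda$ acts with finite orbits on the finite set $\prod_{e} \mathbb{Z}/\kappa_e$ of prong-matchings, $c_\Lambda$ has finite fibers; combined with properness of both source and target, this yields finiteness. Similarly, $p_\Lambda: D_\Lambda^s \to B_\Lambda$ is defined by forgetting $\pi$ entirely and sending the level-$i$ components of $\{\omega_v\}$ to their class in the generalized stratum $B_\Lambda^{[i]}$. Its fibers are the set of prong-matching choices, which is again finite, so $p_\Lambda$ is finite as well.

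The main obstacle will be the stack-theoretic bookkeeping: verifying that $D_\Lambda^s$ has a well-defined Deligne--Mumford structure (whose automorphism groups are finite-index enlargements of those of $D_\Lambda$ by the stabilizer of the $R_\Lambda$-action on prong-matchings), and that both $c_\Lambda$ and $p_\Lambda$ are genuine morphisms of stacks respecting these automorphism groups. One must also check that $p_\Lambda$ factors correctly through $B_\Lambda$, which requires matching the residue conditions $\bm{\mathfrak{R}}^{[i]}$ imposed on the generalized strata with the residue behavior of twisted differentials along the boundary stratum $D_\Lambda$ (automatic in genus zero where the GRC is vacuous, but needing verification in general).
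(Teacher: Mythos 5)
This proposition is not proved in the paper at all: it is quoted verbatim from \cite{cmz22} (Prop.\ 4.4), so the only meaningful comparison is with the construction given there. Measured against that, your sketch has two genuine gaps. First, the quotient you propose is not well-posed. The level rotation torus $T_\Lambda$ is connected, and the discrete level rotation group $R_\Lambda\cong\mathbb{Z}^{L(\Lambda)}$ is not a complementary factor that one can simply decline to quotient by: it acts on prong-matchings precisely as the monodromy of loops in $T_\Lambda$, so ``modulo the continuous part of $T_\Lambda$ but not modulo $R_\Lambda$'' does not define an equivalence relation on triples $(\mathcal{C},\{\omega_v\},\pi)$. The construction in \cite{cmz22} instead replaces the full twist group $\mathrm{Tw}_\Lambda$ by the simple twist group $\mathrm{Tw}^s_\Lambda$ (equivalently, quotients by the simple level rotation torus), and $c_\Lambda$ is then the quotient by the residual finite group of ghost and graph automorphisms; this is exactly what yields the degrees recorded in \cref{degree_product_stratum} ($\deg c_\Lambda^\Lambda$ = ghost and graph automorphisms, $\deg p_\Lambda^\Lambda$ = number of prong-matching equivalence classes). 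With your definition the two degrees over the open stratum would instead be an $R_\Lambda$-orbit size and the full count $\prod_e\kappa_e$, which is not the object the rest of the paper relies on.

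Second, and more seriously, your construction only addresses the open part of the stratum, while $D_\Lambda$ is the \emph{closed} boundary stratum and the substance of the proposition is that $p_\Lambda$ is defined on all of $D_\Lambda^s$: one must show that further degenerations of a multiscale differential compatible with $\Lambda$ --- where the curves within a single level of $\Lambda$ acquire their own level structures, new nodes and new prong-matchings --- map compatibly to boundary points of the compactified generalized strata $B_\Lambda^{[i]}$ (including the residue conditions $\bm{\mathfrak{R}}^{[i]}$), and that $D_\Lambda^s$ is itself a proper DM stack so that quasi-finiteness upgrades to finiteness. Deferring all of this as ``stack-theoretic bookkeeping'' defers essentially the entire proof; in \cite{cmz22} it is carried out through the explicit level-wise plumbing/Dehn-space construction of the compactified stratum, not by rigidifying the open locus after the fact.
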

\begin{remark}\label{degree_product_stratum}
  In general, the degrees of these morphisms $c_\Lambda$ and $p_\Lambda$ are not easy to write down -- only the ratio of their degrees is given in \cite[Lemma 4.5]{cmz22}. However, over the open boundary stratum $U_\Lambda\subset D_\Lambda$ (consisting of differentials compatible with $\Lambda$ but not any of its degenerations), we can replace $c_\Lambda$ and $p_\Lambda$ by finite morphisms $c_\Lambda^\Lambda:U_\Lambda^s\rightarrow U_\Lambda$ and $p_\Lambda^\Lambda:U_\Lambda^s\rightarrow V_\Lambda\subset B_\Lambda$, whose degrees are easier to write down (see the proof of Lemma 4.5 in \textit{loc. cit.}). Note that $U_\Lambda^s$ is not an open subset of $D_\Lambda^s$, but is finitely covered by an open subset of $D_\Lambda^s$. Then the degree of $p_\Lambda^\Lambda$ is equal to the number of prong matching equivalence classes and the degree of $c_\Lambda^\Lambda$ is given by the order of the group of ``ghost automorphisms'' and graph automorphisms of $\Lambda$ (see \cite{cmz22} for more details). In particular, if $\Lambda$ has a unique prong-matching equivalence class, then $\text{deg}\,p_\Lambda^\Lambda=1$ and we obtain a finite morphism $V_\Lambda\rightarrow U_\Lambda$.
\end{remark}

Now, assume $\Lambda$ is an enhanced level graph with no horizontal edges. Then the undegeneration of $\Lambda$ that keeps the $i^{\text{th}}$ level passage, that is, the passage between levels $(-i+1)$ and $-i$, and collapses the other level passages, yields a two-level graph $\Gamma_i$ (we will call such two-level graphs divisorial because the corresponding closed boundary stratum is a divisor in $\overline{B}$). Then the (closed) boundary stratum $D_\Lambda$ is contained in the intersection of the divisors $D_{\Gamma_i}$; in fact $D_\Lambda$ is a union of some components of $\cap_{i=1}^LD_{\Gamma_i}$. This collection of divisorial enhanced level graphs $[\Gamma_1,\ldots,\Gamma_L]$ in this exact order is referred to as the \textit{profile} of the boundary stratum in \cite{cmz22}. However, it should be noted that there might be a different level graph $\Lambda'$, also with $L$ levels, such that $D_{\Lambda'}$ is also contained in the intersection $\cap_{i=1}^LD_{\Gamma_i}$. Nonetheless, a consequence of \cite[Prop. 5.1]{cmz22} is that the boundary stratum $D_{\Lambda'}$ also will have the same profile as $D_\Lambda$, that is, the undegeneration of $\Lambda'$ that keeps the $i^{\text{th}}$ level passage and collapses the rest yields exactly $D_{\Gamma_i}$:
\begin{proposition}[\cite{cmz22}, Proposition 5.1]\label{boundary_intersection}
  If $D_{\Gamma_1},\ldots,D_{\Gamma_L}$ are vertical boundary divisors on $\overline{B}$ such that $\cap_{i=1}^LD_{\Gamma_i}$ is non-empty then there is a unique ordering $\sigma\in \mathrm{Sym}_L$ of the set $I\coloneqq\{1,\ldots,L\}$ such that \[D_{\sigma(I)}=\bigcap_{j=1}^LD_{\Gamma_{i_j}},\]where $D_{\sigma(I)}$ is the union of boundary strata in $\overline{B}$ with the profile $[\Gamma_{\sigma(1)},\ldots,\Gamma_{\sigma(L)}]$. Moreover, if $\Gamma_i=\Gamma_j$ for $i\neq j$ then there is no codimension $L$ boundary stratum with profile $[\Gamma_1,\ldots,\Gamma_L]$.
\end{proposition}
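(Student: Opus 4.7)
The plan is to reduce everything to a combinatorial bijection between the level passages of a single enhanced level graph $\Lambda$ and its vertical divisorial undegenerations. Fix a non-empty irreducible component of $\bigcap_{i=1}^L D_{\Gamma_i}$ and pick a point $p$ lying in the open boundary stratum through this component. Let $\Lambda$ be the enhanced level graph compatible with $p$. Because $p \in D_{\Gamma_i}$ for each $i$, every $\Gamma_i$ must arise as a vertical divisorial (two-level) undegeneration of $\Lambda$, obtained by selecting a single level passage of $\Lambda$ to keep, placing every vertex above it on the top level, every vertex below it on the bottom level, and contracting the remaining passages. In genus zero the underlying graph of $\Lambda$ is a tree, so by \cref{enhancements_on_tree} the enhancements on the edges of the resulting two-level graph are forced by combinatorics; no ambiguity can creep in from the enhancement data.

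The key step is then to show that two different level passages of $\Lambda$ produce genuinely different divisorial undegenerations. Indeed, moving from the $k$-th to the $(k{+}1)$-st level passage shifts every vertex at level $-k$ from the bottom side to the top side of the resulting two-level partition, so the partition of $V(\Lambda)$ into upper/lower halves — and hence the two-level graph — strictly changes. This gives an injection from the set of level passages of $\Lambda$ into the set of its vertical divisorial undegenerations. Applying this to our $L$ divisors $\Gamma_1,\ldots,\Gamma_L$, they correspond to $L$ distinct level passages of $\Lambda$, so $\Lambda$ has codimension at least $L$; since $p$ lies in a component of a codimension-$L$ intersection, equality holds, and the correspondence ``$k$-th level passage $\longleftrightarrow$ $\Gamma_{\sigma(k)}$'' defines the unique permutation $\sigma \in \mathrm{Sym}_L$. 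By construction, the profile of $\Lambda$ is exactly $[\Gamma_{\sigma(1)}, \ldots, \Gamma_{\sigma(L)}]$, so the component is contained in $D_{\sigma(I)}$; running over all components shows $\bigcap_i D_{\Gamma_i} \subseteq D_{\sigma(I)}$. The reverse inclusion is immediate: any stratum with profile $[\Gamma_{\sigma(1)},\ldots,\Gamma_{\sigma(L)}]$ undegenerates to $\Gamma_{\sigma(k)}$ upon keeping only its $k$-th passage, and thus lies in every $D_{\Gamma_i}$.

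The ``moreover'' part follows from the same injectivity: if $\Gamma_i = \Gamma_j$ for some $i \neq j$ and a codimension-$L$ boundary stratum had profile $[\Gamma_1, \ldots, \Gamma_L]$, then the associated graph $\Lambda$ would have two distinct level passages producing the same divisorial undegeneration, contradicting the combinatorial bijection above.

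The step I expect to be the main obstacle is the injectivity claim: in principle one could worry that two different level partitions of $\Lambda$ yield two-level enhanced graphs that are abstractly isomorphic, even if the partitions of $V(\Lambda)$ differ. The tree structure in genus zero, together with \cref{enhancements_on_tree}, is what prevents such an accidental coincidence, but one must genuinely use it — in higher genus this subtlety is what forces a more delicate argument and is the real content behind the proposition.
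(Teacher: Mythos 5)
First, note that the paper does not prove \cref{boundary_intersection} at all: it is quoted verbatim from \cite{cmz22} (Proposition 5.1 there) and used as a black box, so your argument has to stand entirely on its own. As written, it does not. The central step --- that two distinct level passages of $\Lambda$ yield distinct vertical divisorial undegenerations --- is asserted, not proved. Your justification, ``the partition of $V(\Lambda)$ into upper/lower halves strictly changes, hence the two-level graph strictly changes,'' is a non sequitur: the divisor $D_\Gamma$ only remembers the contracted, leg-labelled enhanced two-level graph up to isomorphism, not the partition of $V(\Lambda)$, and a level of $\Lambda$ may consist entirely of vertices carrying no marked legs (such vertices occur already in the paper's $n=7$ example), so changing the partition need not visibly change the leg data. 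You yourself flag this as ``the main obstacle'' and say the tree structure plus \cref{enhancements_on_tree} ``prevents such an accidental coincidence,'' but no argument is given; and since this injectivity statement is essentially equivalent to the ``moreover'' clause of the proposition, your treatment of that clause is circular. One would need to actually rule out a leg-preserving isomorphism between the two contracted graphs (e.g.\ by comparing the leg distributions and edge/enhancement data of the two undegenerations and using the tree structure), and this is precisely the nontrivial content being cited.

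There is a second gap in the main statement: uniqueness of $\sigma$ across the whole intersection. You extract $\sigma$ from one chosen component (one point $p$, one graph $\Lambda$) and then say ``running over all components shows $\bigcap_i D_{\Gamma_i}\subseteq D_{\sigma(I)}$.'' A priori different components could assign the divisors $\Gamma_1,\ldots,\Gamma_L$ to their level passages in different orders, giving different permutations, in which case no single $\sigma$ satisfies the asserted equality. The fact that the relative order of the passages corresponding to $\Gamma_i$ and $\Gamma_j$ is determined by the graphs themselves, independently of the point or component --- which is exactly why the paper can speak of \emph{the} profile of an intersection, cf.\ the discussion preceding \cref{boundary_intersection} and \cref{rmk_disjoint} --- is part of the content of \cite{cmz22} and is nowhere established in your proposal. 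Both gaps would need genuine arguments before this could be considered a proof rather than a reduction of the proposition to its own hardest special cases.
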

\begin{remark}\label{rmk_disjoint}
  If $D_1$ and $D_2$ are two different irreducible components of a vertical boundary divisor $D_\Gamma$, then the preceding proposition implies $D_1\cap D_2$ is empty. Indeed, otherwise $D_1\cap D_2$ would give us a codimension 2 boundary stratum with profile $[\Gamma_1,\Gamma_2]$ for $\Gamma_1=\Gamma_2=\Gamma$. This, in turn, implies that if $D_1$ and $D_2$ are two distinct irreducible components of an arbitrary (not necessarily divisorial) boundary stratum $D_\Lambda$, where $\Lambda$ has at least two levels, then $D_1\cap D_2$ is empty.
\end{remark}

\subsection{Overview of Chow-K\"unneth generation Property}\label{ckgp_overview}
In this section, we aim to summarize the basic results about Chow-K\"unneth generation Property (CKgP) that play an important role in our discussion later. For details and proofs, we refer to \cite[sec. 3]{cl22} and \cite[sec. 4]{clp23}, as well as references therein.
\begin{definition}
  We say a space (scheme or algebraic stack) $X$ has the Chow-K\"unneth generation Property (abbreviated CKgP) if for all spaces $Y$ (of finite type and admitting a stratification by global quotient stacks), the exterior product map on the rational Chow rings:
  \[CH_*(X)_{\mathbb{Q}}\otimes CH_*(Y)_{\mathbb{Q}}\rightarrow CH_*(X\times Y)_{\mathbb{Q}}\]is surjective (where $CH_*(X)_{\mathbb{Q}}\coloneqq CH_*(X)\otimes\mathbb{Q}$).
\end{definition}
Now, we enumerate some important features of CKgP:
\begin{enumerate}
\item\label{p1} If $X$ has the CKgP and $U\subset X$ is open then $U$ also has the CKgP. \cite[Prop. 4.2(1)]{clp23}
\item\label{p2} If $X\rightarrow Y$ is a proper and surjective morphism of DM stacks and $X$ has the CKgP then so does $Y$. \cite[Prop. 4.2(2)]{clp23}
\item\label{p3} If $X$ admits a finite stratification $X=\coprod_{S\in \Sigma}S$ and each $S$ has the CKgP then so does $X$. \cite[Prop. 4.2(3)]{clp23}
\item\label{p4} If $X_1$ and $X_2$ are spaces with the CKgP then $X_1\times X_2$ also has the CKgP. \cite[Lemma 3.2]{cl22}
\item\label{p5} Suppose $X$ is a stack that admits a coarse moduli space $X\rightarrow M$. Then $X$ has the CKgP if and only if $M$ does. \cite[Lemma 3.9]{cl22}
\item\label{p6} If $X$ is a smooth, proper Deligne-Mumford stack that has the CKgP, then the cycle class map $\text{cl}:CH^*(X)\otimes \mathbb{Q}\rightarrow H^*(X,\mathbb{Q})$ is a ring isomorphism. In particular, $X$ has no odd rational cohomology. \cite[Lemma 3.11]{cl22}
\end{enumerate}

\section{Irreducible components of boundary strata}\label{irred_bdry}
The aim of this section is to prove the irreducibility of any non-empty intersection of boundary divisors of the moduli space $\overline{B}$ in genus zero. For the rest of the paper, $\pi$ will denote the birational morphism $\overline{B}\rightarrow \overline{M}_{0,n}$, $D_\Gamma$ will denote the closed boundary stratum given by the level graph $\Gamma$ and $\delta_\Gamma$ its image in $\overline{M}_{0,n}$.
\begin{lemma}\label{unique_graph}
  Suppose $D_{\Gamma_1},\ldots,D_{\Gamma_r}$ are boundary divisors in the moduli space of multiscale differentials $\overline{B}$ in genus 0 with non-empty intersection. Then, there is a unique codimension $r$ level graph compatible with $D_{\Gamma_1}\cap\ldots\cap D_{\Gamma_r}$.
\end{lemma}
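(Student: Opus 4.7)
The plan is to show directly that any codimension $r$ level graph $\Lambda$ with $D_\Lambda\subseteq\bigcap_{i=1}^r D_{\Gamma_i}$ is uniquely determined by the collection $\{\Gamma_i\}$. First I would partition the divisors into the horizontal divisors $\{H_1,\ldots,H_a\}$ (those with a single horizontal edge) and the vertical divisors $\{V_1,\ldots,V_b\}$ (the two-level graphs), with $a+b=r$. Since $D_\Lambda\subseteq D_{\Gamma_i}$ means $\Gamma_i$ is a codimension $1$ undegeneration of $\Lambda$, and $\Lambda$ has exactly $r$ such undegenerations—one per horizontal edge and one per level passage—the horizontal divisors correspond to the $a$ horizontal edges of $\Lambda$ and the vertical divisors correspond to the $b$ level passages.

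Next I would recover the level function on the vertex set $V(\Lambda)$. Let $\Lambda^{\mathrm{vert}}$ denote the undegeneration of $\Lambda$ obtained by contracting all horizontal edges. Then $\Lambda^{\mathrm{vert}}$ has codimension $b$ and its divisorial undegenerations are exactly $\{V_1,\ldots,V_b\}$. Applying \cref{boundary_intersection} (whose hypothesis holds because $D_\Lambda$ lies in the intersection) yields a unique ordering $\sigma\in\mathrm{Sym}_b$ such that $[V_{\sigma(1)},\ldots,V_{\sigma(b)}]$ is the profile of $\Lambda^{\mathrm{vert}}$. Each $V_{\sigma(j)}$ then specifies which vertices of $\Lambda$ lie strictly above the $j$-th passage and which lie at or below it, so collating the $b$ partitions determines the level function uniquely.

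Finally I would reconstruct the dual tree underlying $\Lambda$. Its edges split into horizontal ones—one per $H_k$, each already labelled by its induced partition of $\{p_1,\ldots,p_n\}$—and vertical ones, which appear precisely as the edges of the various $V_{\sigma(j)}$ (contracting the other passages contracts every vertical edge that does not cross the $j$-th passage). Since in genus $0$ a stable tree is determined by the collection of marked-point partitions cut out by its edges, the dual graph of $\Lambda$ is uniquely determined. Combined with the level function from the previous step and \cref{enhancements_on_tree}, which forces the enhancements to be determined by the dual graph, this recovers $\Lambda$ completely. The main subtlety will be verifying that the edge data read off from the individual $\Gamma_i$ assembles into a single stable tree consistent with the recovered level function; this amounts to checking that the edges contributed by the $H_k$ and by each $V_{\sigma(j)}$ are genuinely disjoint contributions to $\Lambda$'s edge set, which follows from the codimension count in the first step.
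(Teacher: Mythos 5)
Your route is genuinely different from the paper's: instead of the paper's induction on $r$ (comparing two candidate graphs $\Lambda_1,\Lambda_2$ level by level and using the tree structure to match up the middle-level vertices), you try to reconstruct $\Lambda$ outright from the $\Gamma_i$. The skeleton is viable: each $\Gamma_i$ is a divisorial undegeneration of $\Lambda$, the set of marked-point partitions cut by the edges of $\Lambda$ equals the union of the edge-partitions of the vertical $\Gamma_i$ together with the partitions of the horizontal ones, a stable $n$-marked genus $0$ tree is determined by this set, and \cref{enhancements_on_tree} then forces the enhancements. However, two of the steps you actually wrote down do not hold up.

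First, the claim that closes your argument --- that the edges contributed by the $H_k$ and by each $V_{\sigma(j)}$ are ``genuinely disjoint contributions to $\Lambda$'s edge set,'' justified by the codimension count --- is false: an edge of $\Lambda$ whose endpoints are more than one level apart crosses several level passages and hence appears as an edge of every $V_{\sigma(j)}$ for those passages, so the contributions overlap, and the codimension count (which counts passages plus horizontal edges, not edges) says nothing about this. Disjointness is in fact unnecessary --- a set-theoretic union of partitions suffices to recover the tree --- but the justification you offer for your ``main subtlety'' does not work as stated. Second, your recovery of the level function is circular in its current order: ``each $V_{\sigma(j)}$ specifies which vertices of $\Lambda$ lie strictly above the $j$-th passage'' is not meaningful from the given data before the dual tree of $\Lambda$ is pinned down, and even once it is, vertices carrying no marked leg need an argument: one must identify the contraction $\Lambda\to V_{\sigma(j)}$ (keep exactly the edges whose partitions occur among those of $V_{\sigma(j)}$, contract the rest) and use that labelled stable trees have no automorphisms fixing the legs, so the top/bottom bipartition pulls back unambiguously. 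Locating exactly these leg-free intermediate vertices is where the paper's proof spends most of its effort, so it cannot be waved through. With the steps reordered (tree first from the union of partitions, then levels via the contraction maps, then enhancements) and the disjointness claim discarded, your approach does yield a complete and arguably cleaner proof.
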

\begin{proof}
  We proceed by induction on $r$. First, assume none of the graphs $\Gamma_i$ have a horizontal edge, and $\Lambda_1$ and $\Lambda_2$ are codimension $r$ level graphs compatible with some element in $D_{\Gamma_1}\cap\ldots\cap D_{\Gamma_r}$. \Cref{boundary_intersection} implies that there is the unique ordering (which is the same for both $\Lambda_1$ and $\Lambda_2$) of the graphs $\Gamma_1,\ldots,\Gamma_r$ so that $\Gamma_i$ is obtained from both $\Lambda_1$ and $\Lambda_2$ via the undegeneration keeping the $-i+1$ to $-i$ level passage and collapsing the others. We assume that $D_{\Gamma_2}\cap\ldots\cap D_{\Gamma_r}$ is given by a unique codimension $r-1$ level graph $\Delta$. So, both $\Lambda_1$ and $\Lambda_2$ are given by the top level degeneration of $\Delta$, so that the vertices in level -2 and below of both $\Lambda_1$ and $\Lambda_2$ are identical (to those at $-1$ and below of $\Delta$) -- that is, the corresponding vertices have identical set of half edges. Additionally, for each level passage below level $-1$, there is one-to-one correspondence between the edges in $\Lambda_1$ and those in $\Lambda_2$, that matches the enhancements as well. On the other hand, the undegenerations of both $\Lambda_1$ and $\Lambda_2$ keeping the top level passage (and collapsing the others) yield $D_{\Gamma_1}$. This forces both of them to have the same vertices at the top level as well; also the edges appearing in the top level passages of both $\Lambda_1$ and $\Lambda_2$ are in one-to-one correspondence. Since the level graphs are identical at level -2 and lower, we can contract all the lower level passages and assume that $\Lambda_1$ and $\Lambda_2$ have exactly three levels, with identical sets of vertices at levels 0 and $-2$.
  
  Now, we consider the vertices appearing in level $-1$ and prove that there is a one-to-one correspondence between the vertices of $\Lambda_1$ and $\Lambda_2$ with identical sets of half-edges. Consider a vertex $u_1$ in level $-1$ of $\Lambda_1$ that contains a marked point $p_1$. Suppose the vertex in level $-1$ of $\Lambda_2$ with the marked point $p_1$ is $u_1'$. We will now prove that both $u_1$ and $u_1'$ have identical set of marked points. So suppose the contrary, that is, there is a marked point $p_2$ on $u_1$ that is contained on a different vertex $u_2'$ in $\Lambda_2$. Since contracting the top level passage results in identical level graph $\Delta$ for both $\Lambda_i$'s, both $p_1$ and $p_2$ will be on the same top level vertex of $\Delta$. Consequently, both $u_1'$ and $u_2'$ must be connected via a sequence of vertical edges whose endpoints are contained in level 0 or $-1$, that is, we have a path $u_1'\rightarrow \alpha_1\rightarrow \ldots\rightarrow \alpha_k\rightarrow u_2'$ in $\Lambda_2$ consisting of vertical edges, where $\alpha_i$ are vertices in level 0 or $-1$. Similarly, contracting the bottom level passage next, we obtain a path (composed of vertical edges) $u_1'\rightarrow \beta_1\rightarrow\ldots\rightarrow \beta_j\rightarrow u_2'$ where $\beta_i$ are vertices in level $-1$ or $-2$. Combining these two paths, we obtain a non-trivial loop in $\Lambda_2$, which cannot exist on a tree. Thus, $u_1$ and $u_1'$ have identical set of marked points if they have one in common.
  
  Next, we look at a vertex $u$ in $\Lambda_1$ that has no marked point; our aim will be to find a vertex $u'$ in $\Lambda_2$ that has identical set of half-edges as $u$, that is $u'$ also has no marked point, and is connected to the identical set of vertices in level 0 and level $-2$ as $u$. Note that there must be at least three edges attached to $u$, with at least one going up and one going down. Suppose $u$ is connected to vertices $v_1,\ldots, v_r$ in the top level and $w_1,\ldots,w_s$ in the bottom level. Denote by $v_1',\ldots,v_r'$ the vertices in the top level of $\Lambda_2$ with the same set of half edges as $v_1,\ldots,v_r$ and by $w_1',\ldots,w_s'$ the analogous set of vertices in the bottom level of $\Lambda_2$. As before, contracting the bottom level passage must yield the same level graph for both $\Lambda_i$'s. Since such undegeneration coalesces $u,w_1,\ldots,w_s$ to a single vertex, it follows that $w_1',\ldots,w_s'$ also get coalesced into a single vertex after such a degeneration. But this can only happen if any two of $w_i'$ are connected via path consisting of vertical edges whose endpoints lie on level $-1$ or $-2$. Suppose such a subtree consists of two (or more) vertices $u_1', u_2'$ of level $-1$. Analogously, it follows that all of $v_i'$ are also connected via a subtree consisting of vertical edges whose endpoints lie on level 0 and $-1$. Then $u_1'$ and $u_2'$ also should be a part of such a subtree, so we see that $u_1'$ and $u_2'$ are connected by two distinct paths, which is impossible in a tree. Thus, it follows that the vertices $v_1',\ldots,v_r'$ and $w_1',\ldots,w_s'$ are connected to a unique vertex $u'$ in level $-1$, and this vertex has identical set of half-edges as $u$ in $\Lambda_1$.
  
  Combining the argument from the previous two paragraph, we conclude that there is one-to-one correspondence between the vertices in level $-1$ of $\Lambda_1$ and $\Lambda_2$ with identical set of half edges.
  
  Next, we assume $D_{\Gamma_1}$ is a horizontal boundary divisor. Such a horizontal edge is characterized by the partition of marked points into the two ends. By induction hypothesis, we assume $D_{\Gamma_2}\cap\ldots\cap D_{\Gamma_r}$ is given by a unique codimension $r-1$ graph $\Delta$. Then the dual graph of $\Lambda_i$ has one more edge than that of $\Delta$ and that edge is created by a degeneration of a particular vertex $v$ of the latter. That is, both $\Lambda_1$ and $\Lambda_2$ are obtained from $\Delta$ by degenerating the vertex $v$ in $\Delta$ by introducing a horizontal edge that produces a fixed partition of the marked points on the two sides, so they have to be the same.
\end{proof}
Now we move to the proof of irreducibility of the boundary strata. First we deal with the level graphs with only horizontal edges.
\begin{claim}\label{irreducibility_divisor}
  Suppose $\Gamma$ is a level graph with  only horizontal edges. Then the boundary stratum $D_\Gamma\subset \overline{B}$ is irreducible.
\end{claim}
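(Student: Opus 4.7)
The plan is to show that the open stratum $U_\Gamma \subset D_\Gamma$ (the locus of differentials compatible with $\Gamma$ but not any further degeneration) maps bijectively via the restriction of $\pi$ to the open boundary stratum $\delta_\Gamma^\circ \subset \overline{M}_{0,n}$ with the same dual graph. Since $\delta_\Gamma^\circ$ is irreducible -- it is the quotient of the irreducible product $\prod_v \mathcal{M}_{0,n_v}$ by the finite graph automorphism group of $\Gamma$ -- this will force $U_\Gamma$, and hence $D_\Gamma = \overline{U_\Gamma}$, to be irreducible.

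Since $\Gamma$ has only horizontal edges, all vertices lie on a single level with vanishing enhancements, and by \Cref{enhancements_on_tree} no level-rotation or prong-matching data is required. A point of $U_\Gamma$ is thus the common $\mathbb{C}^*$-equivalence class of a tuple $\{\omega_v\}_v$ of meromorphic differentials on the $\mathbb{P}^1$-components of a pointed nodal curve with dual graph $\Gamma$, having the prescribed orders at the legs, simple poles at the horizontal half-edges, and opposite residues at the two ends of each horizontal edge. The main step is the rigidity of this tuple: on each $\mathbb{P}^1$-component $C_v$ with $n_v$ marked points of prescribed orders, the space of meromorphic differentials of this type is one-dimensional, so fixing generators $\omega_v^0$ with residue $r_v$ at each horizontal half-edge, any compatible tuple can be written as $(c_v\omega_v^0)_v \in (\mathbb{C}^*)^{|V(\Gamma)|}$ subject to the $|E(\Gamma)|$ linear residue-matching conditions $c_v r_v + c_{v'} r_{v'} = 0$ (one per horizontal edge). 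Since $\Gamma$ is a tree in genus zero (so $|V(\Gamma)| = |E(\Gamma)|+1$) and residues at genuine simple poles are necessarily nonzero, these equations propagate the ratios $c_v/c_{v'} = -r_{v'}/r_v$ along the tree from an arbitrary base vertex, yielding a one-dimensional solution space of nowhere-vanishing tuples, which collapses to a single point after the overall $\mathbb{C}^*$-quotient.

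Consequently, the forgetful morphism $\pi|_{U_\Gamma} \colon U_\Gamma \to \delta_\Gamma^\circ$ is bijective on geometric points: surjective by the construction of $\{\omega_v\}$ from the pointed curve, and injective by the rigidity above. Since $U_\Gamma$ is pure of dimension $\dim \delta_\Gamma^\circ$ and the image is irreducible, $U_\Gamma$ has a unique top-dimensional component and is therefore irreducible; taking closure, $D_\Gamma$ is irreducible as well. The main obstacle is the rigidity / uniqueness argument for the compatible tuple $\{\omega_v\}$, which uses crucially that $\Gamma$ is a tree and that simple poles have nonzero residues; once this is in place, the irreducibility of $D_\Gamma$ is immediate from the well-known irreducibility of the boundary strata of $\overline{M}_{0,n}$.
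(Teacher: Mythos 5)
Your argument is correct, but it proves the key point by a different mechanism than the paper. Both proofs reduce to the same skeleton: the purely horizontal stratum $D_\Gamma$ maps onto the irreducible boundary stratum $\delta_\Gamma\subset\overline{M}_{0,n}$ with the same dual graph, and irreducibility follows once the generic fiber of $\pi|_{D_\Gamma}$ is a single point. The paper gets the singleton fiber abstractly: since $\Gamma$ has $r$ horizontal edges, $D_\Gamma$ and $\delta_\Gamma$ both have codimension $r$, so the fiber of the proper birational morphism $\pi$ over a generic point of $\delta_\Gamma$ is finite, and Zariski's main theorem (using normality of $\overline{M}_{0,n}$) forces it to be connected, hence a point. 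You instead establish it by an explicit rigidity computation: on each $\mathbb{P}^1$-component a differential with prescribed divisor is unique up to scale, the residues at the simple poles are nonzero, and the residue-matching equations propagate the scalars uniquely along the tree, so the compatible tuple is unique up to the overall $\mathbb{C}^*$ that is projectivized away. Your route is more hands-on and genus-0-specific (it uses that the dual graph is a tree and that horizontal nodes are simple poles), and it yields slightly more -- an explicit bijection $U_\Gamma\to\delta_\Gamma^o$, hence also surjectivity and the codimension count -- whereas the paper's ZMT argument is shorter and does not require writing down the differentials at all. Two small points to tidy: the citation of \cref{enhancements_on_tree} is not really what rules out prong-matching and level-rotation data (that is immediate from the definitions, since horizontal edges carry $\kappa_e=0$ and there are no levels below the top); and your purity claim for $U_\Gamma$ should be justified by the normal crossings structure of $\partial\overline{\mathcal{B}}$, which guarantees every component of $D_\Gamma$ has codimension exactly $r$ -- with that in place, bijectivity onto the irreducible $\delta_\Gamma^o$ indeed excludes a second component.
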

\begin{proof}
  Suppose the level graph $\Gamma$ has $r$ horizontal edges, so $D_\Gamma$ has codimension $r$ in $\overline{B}$. Then the image $\delta_\Gamma$ of $D_\Gamma$ in $\overline{M}_{0,n}$ also has codimension $r$, so the fiber of $\pi:\overline{B}\rightarrow \overline{M}_{0,n}$ over a generic point in $\delta_\Gamma$ is finite. But $\overline{M}_{0,n}$ is normal, so by Zariski's main theorem, the fiber of $\pi$ over a generic point in $\delta_\Gamma$ is connected (so a singleton), and thus $D_\Gamma$ is irreducible.
\end{proof}
For a more general level graph $\Gamma$, a priori the presence of multiple prong-matching equivalence classes for a twisted differential, that cannot be connected by the monodromy action of the level rotation torus, can result in reducibility of the boundary stratum $D_\Gamma$. However, the prong-matching equivalence classes are better behaved in genus 0, as indicated by the following lemma, which will play a crucial role in our proof of irreducibility of the boundary strata:
\begin{lemma}\label{prongs_unique}
  Suppose $\Gamma$ is an enhanced level graph in genus 0 with only vertical edges (or more generally, an enhanced level graph that is a tree and has only vertical edges) such that each level contains exactly one vertex. Then the number of prong matching equivalence classes for any twisted differential compatible with the level graph $\Gamma$ is exactly one.
\end{lemma}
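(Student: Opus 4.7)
The plan is to translate the statement into the surjectivity of an integer linear map. Label the unique vertex at level $-i$ as $v_{-i}$ for $i=0,1,\ldots,L$, where $L$ is the number of level passages. Each of the $L$ vertical edges of the tree $\Gamma$ connects $v_{-a(e)}$ to $v_{-b(e)}$ with $a(e)<b(e)$ and crosses exactly the passages $a(e)+1,\ldots,b(e)$. The action of $R_\Gamma\cong\mathbb{Z}^L$ on the set of prong matchings $\prod_e\mathbb{Z}/\kappa_e\mathbb{Z}$ is the coordinatewise reduction of a $\mathbb{Z}$-linear map $\phi:\mathbb{Z}^L\to\mathbb{Z}^L$ (source indexed by level passages, target by edges) given by the $L\times L$ matrix $M$ with $M_{e,i}=1$ exactly when $a(e)<i\leq b(e)$. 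Since the number of prong-matching equivalence classes equals the index in $\prod_e\mathbb{Z}/\kappa_e\mathbb{Z}$ of the image of this composition, it suffices to prove $\det M=\pm 1$, whereupon $\phi$ is a $\mathbb{Z}$-linear isomorphism of $\mathbb{Z}^L$ and its reduction is surjective regardless of the values of $\kappa_e$.

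To compute $\det M$, I perform the determinant-preserving column operations $\mathrm{col}_i\mapsto\mathrm{col}_i-\mathrm{col}_{i+1}$ for $i=1,\ldots,L-1$. A direct check shows that the resulting matrix $M^*$ has entries $M^*_{e,i}=[b(e)=i]-[a(e)=i]$, which is precisely the signed incidence matrix of the tree $\Gamma$ in which the column corresponding to $v_0$ has been removed (the row for an edge $e$ carries $+1$ at its lower endpoint and $-1$ at its upper endpoint, unless the upper endpoint is $v_0$, in which case the $-1$ is dropped). It is then a classical fact that this matrix has determinant $\pm 1$; one direct way to see it is to order the non-root vertices $v_{-i_1},\ldots,v_{-i_L}$ by breadth-first search from $v_0$ and order the edges correspondingly by the BFS-parent relation, which makes $M^*$ lower triangular with $\pm 1$ on the diagonal.

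This gives $\det M=\det M^*=\pm 1$, so $\phi$ surjects onto $\mathbb{Z}^L$ and hence, after composition with the quotient, onto $\prod_e\mathbb{Z}/\kappa_e\mathbb{Z}$, establishing transitivity and proving that there is exactly one prong-matching equivalence class. The main step, and the one most likely to feel non-obvious, is realizing that these column operations convert the interval-indicator matrix $M$ into a signed incidence matrix of $\Gamma$; without this reduction the problem appears combinatorially complicated, since the pattern of which edges cross which passages can vary considerably with the shape of the tree, and a brute-force case analysis or naive induction (e.g., peeling off a leaf $v_{-L}$) runs into trouble whenever the lowest-level vertex has degree greater than one.
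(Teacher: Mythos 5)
Your proof is correct, and it takes a genuinely different route from the paper. The paper argues by induction on the number of edges: it deletes a leaf $v$ of the tree, invokes transitivity of the smaller level rotation group $R_\Lambda$, and then handles the extra edge $e$ by rotating the level passage just below $v$ and compensating with an opposite rotation of the passage just above, using exactly the hypothesis that $v$ is the unique vertex on its level (so every other edge crossing the lower passage also crosses the upper one). You instead make the problem purely linear-algebraic: fixing a reference prong matching identifies the set of prong matchings with $\prod_e\mathbb{Z}/\kappa_e\mathbb{Z}$, the orbits of $R_\Gamma\cong\mathbb{Z}^L$ are the cosets of the image of the crossing matrix $M$, and your column-differencing observation turns $M$ into the reduced (root-deleted) signed incidence matrix of the tree, which is unimodular by the standard BFS-triangularization (or total unimodularity of incidence matrices of graphs). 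Hence the rotation map is onto $\prod_e\mathbb{Z}/\kappa_e\mathbb{Z}$ independently of the enhancements $\kappa_e$, giving a single class. Your computations check out: with passage $i$ lying above level $-i$, an edge from level $-a(e)$ to $-b(e)$ crosses passages $a(e)+1,\ldots,b(e)$, and the sequential operations $\mathrm{col}_i\mapsto\mathrm{col}_i-\mathrm{col}_{i+1}$ indeed yield $M^*_{e,i}=[b(e)=i]-[a(e)=i]$; the one-vertex-per-level hypothesis is used precisely to make $M$ square and to identify passages with non-root vertices. What your approach buys is a non-inductive, closed-form argument that moreover identifies the set of prong-matching classes in general as a cokernel, making transparent why the statement can fail when a level contains several vertices; the paper's induction is more elementary and stays closer to the way level rotations are manipulated in the multiscale literature. (One small caveat: your closing remark that leaf-peeling induction ``runs into trouble'' is not quite fair to the paper's argument, which peels an arbitrary leaf and repairs the side effect by the compensating rotation of the adjacent passage -- but this does not affect the validity of your proof.)
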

Utmost thanks to Myeongjae Lee for providing the idea of the proof of this lemma.
\begin{proof}
  The proof is based on induction on the number of levels on $\Gamma$ (or equivalently, the number of edges on $\Gamma$). The base case where the graph $\Gamma$ has exactly one edge is clear. Now, consider a level graph $\Gamma$ with $L$ edges (so there are $L+1$ vertices and $L$ levels below the top one). Let $v$ be a leaf of $\Gamma$ (since $\Gamma$ is a tree, such a vertex exists) and assume it is connected by (unique) edge $e$ to the unique vertex at level $j$ (without loss of generality, assume $i>j$). Erasing the edge $e$ and replacing it by the corresponding half leg on the vertex at level $j$, we obtain a new enhanced level graph $\Lambda$ with $L$ vertices. Then, by the induction hypothesis, the level rotation group $R_\Lambda\cong\mathbb{Z}^{L-1}$ acts on the set of prong matchings on a twisted differential compatible with $\Lambda$ transitively. This level rotation group $R_\Lambda$ is a subgroup of the level rotation group $R_\Gamma\cong \mathbb{Z}^L$ as a direct summand, $R_\Gamma=R_\Lambda\times \mathbb{Z}$, where the last factor of $\mathbb{Z}$ acts on the prong matchings of $\Gamma$ by rotating the prongs in the level passage from $i$ to $i-1$ (but fixing the prongs in other level passages), whereas the factor $R_\Lambda$ acts by rotating the prongs in the other level passages.
  
  So, using the transitivity of the aciton of $R_\Lambda$ on the prongs matchings on $\Lambda$, we  identify all the prong matchings on the edges other than $e$, then use the remaining factor of $\mathbb{Z}$ acting on the level passage from $i$ to $i-1$ in order to rotate the prongs on edge $e$ so as to identify any two prong matchings on $e$ to each other. However this latter action also rotates the prongs on all the other edges that cross the level passage $i$ to $i-1$. But notice that since $v$ is the unique vertex in level $i$, any edge in $\Gamma$ that crosses the level passage $i$ to $i-1$ also crosses the level passage $i+1$ to $i$. So, if $p$ is the integer by which we rotated the level passage $i$ to $i-1$, we readjust the prongs in the level passage $i+1$ to $i$ via rotation by $-p$, so that the net result prong rotation of all edges (other than $e$) that crosses the level passage $i$ to $i-1$ is 0 (in particular, this ensures the transitivity of action of $R_\Lambda$ on the edges other than $e$ remains undisturbed). This completes the proof.
\end{proof}
We remark that the same proof applies even when there are some horizontal edges, as long as each level by itself is connected. With this, we can prove that the boundary stratum $D_\Gamma$ for such a graph $\Gamma$ is irreducible:
\begin{proposition}
  Suppose $\Gamma$ is an enhanced level graph in genus 0 such that each level by itself is connected, i.e., the vertices within each level are connected by horizontal edges. Then the boundary stratum $D_\Gamma$ is irreducible.
\end{proposition}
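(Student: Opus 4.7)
The plan is to reduce the irreducibility of $D_\Gamma$ to that of its generalized-stratum factors $B_\Gamma^{[i]}$, using the structure provided by \Cref{product_stratum} and \Cref{degree_product_stratum}, and then to argue irreducibility of each factor by genus-$0$ considerations.

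First, since each level of $\Gamma$ is connected, the extension of \Cref{prongs_unique} noted in the remark immediately following it applies: the prong-matching equivalence class for any twisted differential compatible with $\Gamma$ is unique. By \Cref{degree_product_stratum}, the morphism $p_\Gamma^\Gamma : U_\Gamma^s \to V_\Gamma$ therefore has degree one, so we obtain a finite surjective morphism $V_\Gamma \to U_\Gamma$, where $V_\Gamma$ is a non-empty open subvariety of $B_\Gamma = \prod_i B_\Gamma^{[i]}$.

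Next, I would show that each generalized stratum $B_\Gamma^{[i]}$ is irreducible. Its defining datum consists of genus-$0$ vertices connected by horizontal edges (by hypothesis of the proposition), with half-edges corresponding either to marked points of $\overline{B}$ or to the endpoints of vertical edges passing to other levels. The open part of $B_\Gamma^{[i]}$ fibers over the moduli of stable genus-$0$ curves with the prescribed horizontal dual graph --- an irreducible boundary stratum in the appropriate $\overline{M}_{0,n^{[i]}}$ --- and over each such curve, the meromorphic differential of prescribed type is determined by its divisor up to a scalar per component. The residue conditions $\bm{\mathfrak{R}}^{[i]}$ cut out a closed subvariety by linear equations in this family, and irreducibility then follows by an argument in the spirit of \Cref{irreducibility_divisor}: the forgetful projection to $\overline{M}_{0,n^{[i]}}$ is codimension-preserving and generically finite, so the normality of $\overline{M}_{0,n^{[i]}}$ and Zariski's main theorem force the generic fiber to be a singleton. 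Hence the open part of $B_\Gamma^{[i]}$, and therefore its closure $B_\Gamma^{[i]}$, is irreducible.

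Putting these together, $B_\Gamma = \prod_i B_\Gamma^{[i]}$ is irreducible, its non-empty open subvariety $V_\Gamma$ is irreducible, the image $U_\Gamma$ of $V_\Gamma$ under the finite surjective morphism $V_\Gamma \to U_\Gamma$ is irreducible, and since $D_\Gamma = \overline{U_\Gamma}$ we conclude that $D_\Gamma$ is irreducible. The main obstacle is the verification of irreducibility of each generalized stratum $B_\Gamma^{[i]}$: one must rule out the possibility that the residue conditions $\bm{\mathfrak{R}}^{[i]}$ split the moduli into multiple components, which requires carefully analyzing the genus-$0$ forgetful structure of the generalized stratum and applying Zariski's main theorem to the appropriate birational morphism, in close analogy with the proof of \Cref{irreducibility_divisor}.
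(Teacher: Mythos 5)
Your top-level reduction coincides with the paper's: the remark following \cref{prongs_unique} gives a unique prong-matching equivalence class, \cref{degree_product_stratum} then yields a finite surjective map $V_\Gamma\to U_\Gamma$ with $V_\Gamma$ open in $B_\Gamma=\prod_iB_\Gamma^{[i]}$, so everything reduces to irreducibility of each level stratum $B_\Gamma^{[i]}$. The gap is exactly in that last step, and you flag it yourself without closing it. Your argument is that $B_\Gamma^{[i]}$ maps generically finitely onto (the closure of) a boundary stratum of $\overline{M}_{0,n^{[i]}}$, which is normal, so Zariski's main theorem forces the generic fiber to be a singleton. That inference is not valid: ZMT gives connected fibers for a \emph{proper birational} morphism onto a normal variety, and birationality of $B_\Gamma^{[i]}$ onto its image is essentially the point at issue --- if $B_\Gamma^{[i]}$ had two irreducible components, each mapping birationally onto the boundary stratum, the map would be generically $2:1$, with finite fibers and normal target, and ZMT would raise no objection. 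In \cref{irreducibility_divisor} the argument is legitimate only because the \emph{ambient} morphism $\pi:\overline{B}\rightarrow\overline{M}_{0,n}$ is proper and birational (both sides contain $M_{0,n}$ as a dense open), so its fibers over generic points of $\delta_\Gamma$ are connected, and then finiteness makes them singletons; you cannot quote that argument for the restricted map $B_\Gamma^{[i]}\to\overline{M}_{0,n^{[i]}}$ on its own.

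The paper closes this in two steps that your write-up is missing. If the $i^{\text{th}}$ level has a single vertex, there are no residue conditions at all, since the Global Residue Condition is vacuous in genus 0; hence $B_\Gamma^{[i]}$ is the full stratum $\mathbb{P}\Xi\overline{\mathcal{M}}_{0,n_i}(\mu^{[i]})$, which is irreducible. If the level has several vertices joined by horizontal edges, one smooths the horizontal nodes so as to embed $B_\Gamma^{[i]}$ as a purely horizontal boundary stratum of an honest stratum $\mathbb{P}\Xi\overline{\mathcal{M}}_{0,n_i}(\mu^{[i]})$, and then \cref{irreducibility_divisor} applies verbatim, because that ambient stratum does map properly and birationally onto $\overline{M}_{0,n_i}$. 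Alternatively, you could complete your own computation directly: on a connected genus-0 level the horizontal edges form a tree, simple poles have nonzero residues, so the residue-matching conditions pin down the per-component scalars uniquely up to one overall factor, and the open part of $B_\Gamma^{[i]}$ is in bijection with the irreducible open locus of curves with the prescribed horizontal configuration. Either route also disposes of the worry you leave open about $\bm{\mathfrak{R}}^{[i]}$ splitting the stratum: in genus 0 these conditions are trivial, as the paper notes.
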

\begin{proof}
  Since the number of prong-matching equivalence classes is one, using \cref{degree_product_stratum}, we see that the open substratum $U_\Gamma\subset D_\Gamma$ (consisting of multiscale differentials compatible with $\Gamma$, but not with any degeneration of $\Gamma$) is finitely covered by $V_\Gamma\subset B_\Gamma$ (we are using the same notation as in the remark). Note that $V_\Gamma$ is simply a product of level-wise open strata: $V_\Gamma=\prod_{i=0}^LV_\Gamma^{[i]}$, where $V_\Gamma^{[i]}\subset B_\Gamma^{[i]}$ is the open substratum consisting of non-degenerate twisted differentials. Thus, to prove that $U_\Gamma$ (and thus $D_\Gamma$) is irreducible, it is enough to prove that each $B_\Gamma^{[i]}$ is irreducible. If $i^{\text{th}}$ level consists of a single vertex then since Global Resiue Condition does not appear in genus 0, $B_\Gamma^{[i]}$ is simply $\mathbb{P}\Xi\overline{\mathcal{M}}_{0,n_i}(\mu^{[i]})$ for appropriate $n_i$ and $\mu^{[i]}$, so is irreducible. On the other hand, if the $i^{\text{th}}$ level has horizontal edges, by smoothing the horizontal edges, we can embed $B_\Gamma^{[i]}$ into some moduli space $\mathbb{P}\Xi\overline{\mathcal{M}}_{0,n_i}(\mu^{[i]})$, for appropriate $n_i$ and $\mu^{[i]}$, as a boundary stratum defined by a level graph with only horizontal edges. Then by \cref{irreducibility_divisor}, such a boundary stratum has to be irreducible.
\end{proof}

Using the preceding proposition as the base case, we will now extend the irreducibility to the arbitrary boundary stratum $D_\Gamma$:
\begin{proposition}\label{irreducible_boundary}
  Suppose $\Gamma$ is an (enhanced) level graph in genus 0, then the boundary stratum $D_\Gamma$ parametrizing the multiscale differentials  compatible with $\Gamma$ is irreducible.
\end{proposition}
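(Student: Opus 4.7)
The plan is to induct on the codimension $r$ of $\Gamma$, using the preceding proposition (which handles the case when every level is internally connected via horizontal edges) as the key base step. The codimension $0$ case is trivial, and the codimension $1$ case splits into horizontal divisors, covered by \cref{irreducibility_divisor}, and two-level vertical divisors in genus zero, where the tree structure of the dual graph allows one to invoke \cref{prongs_unique} together with the preceding proposition.

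For the inductive step, suppose $\Gamma$ has codimension $r\geq 2$ and some level that is not internally connected by its horizontal edges. I would construct a further degeneration $\Gamma^*$ of $\Gamma$ as follows: for each disconnected level with horizontal-connected components $C_1,\ldots,C_k$, split that level into $k$ consecutive new levels, placing each $C_j$ on its own level. By \cref{enhancements_on_tree}, the enhancements on all edges are determined by the underlying dual graph and so remain well-defined throughout. Iterating over all disconnected levels yields a $\Gamma^*$ with each level internally connected, so by the preceding proposition $D_{\Gamma^*}$ is irreducible, and $D_{\Gamma^*}\subset D_\Gamma$ since $\Gamma^*$ is a degeneration of $\Gamma$.

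To deduce irreducibility of $D_\Gamma$ itself, I would combine three ingredients. First, $\overline{B}$ is a smooth Deligne-Mumford stack with normal crossing boundary, so $D_\Gamma$ is smooth and its irreducible components coincide with its connected components. Second, \cref{rmk_disjoint} asserts that distinct irreducible components of $D_\Gamma$ are pairwise disjoint. Third, the irreducible subvariety $D_{\Gamma^*}$ sits inside $D_\Gamma$, so if $D_\Gamma$ had multiple components, $D_{\Gamma^*}$ would lie in exactly one of them; it therefore suffices to argue that $D_{\Gamma^*}$ meets every hypothetical component. For this I would analyze the finite morphism $p_\Gamma^\Gamma:U_\Gamma^s\to V_\Gamma\subset B_\Gamma$ from \cref{degree_product_stratum}: in genus zero, $B_\Gamma=\prod_i B_\Gamma^{[i]}$ is irreducible, since each factor $B_\Gamma^{[i]}$ decomposes over the horizontal-connected components of level $i$, each of which is a simpler instance already covered by induction or by \cref{irreducibility_divisor}.

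The principal obstacle is the combinatorial verification that the combined action of the level rotation torus together with the graph and ghost automorphisms acts transitively on the prong-matching equivalence classes for arbitrary $\Gamma$ in genus zero, so that the image $U_\Gamma$ of the potentially reducible cover $U_\Gamma^s$ is nonetheless irreducible. This is where the tree structure of the dual graph is essential, and the argument would proceed by an inductive extension of the method of \cref{prongs_unique}, now carefully tracking the rotations of prongs by the level rotation group alongside the actions of automorphisms on vertices and edges within disconnected levels, in order to match, at any chosen point, a prong matching on an arbitrary component to one connected to the chosen $D_{\Gamma^*}$.
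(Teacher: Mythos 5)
Your setup matches the paper's strategy: degenerate $\Gamma$ to a graph $\Gamma^*$ (the paper's $\Delta$) whose levels are internally connected, use the preceding proposition to get irreducibility of $D_{\Gamma^*}$, and use pairwise disjointness of components (\cref{rmk_disjoint}) to conclude, provided $D_{\Gamma^*}$ meets every component of $D_\Gamma$. But that last step — the whole content of the inductive step — is exactly what you do not prove: you only announce that it "would proceed by an inductive extension of the method of \cref{prongs_unique}". The paper closes this gap with a one-line geometric observation, not a prong-matching computation: any multiscale differential compatible with $\Gamma$ can be degenerated, \emph{inside the closure of its own component}, by sliding the connected pieces of a level apart onto separate levels (this is a degeneration available in the boundary of every component, e.g.\ via the level rotation torus), so every irreducible component of $D_\Gamma$ contains $D_{\Gamma^*}$; since distinct components are disjoint by \cref{rmk_disjoint}, there is only one.

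The route you propose instead — transitivity of the level rotation group together with graph and ghost automorphisms on the prong-matching equivalence classes of an arbitrary genus-$0$ graph — is not only unproven but false as stated. Take the cherry $\langle 5\,||\,1,2\,|\,3,4\rangle$ for $\mu=(1,0,1,0,-4)$: both edges have enhancement $2$, so there are $\gcd(2,2)=2$ prong-matching equivalence classes, while the ghost group is trivial (the lattices $M$ and $M'$ coincide for a two-level graph) and there is no nontrivial graph automorphism since the legs are labelled; the level rotation acts trivially on the set of classes by definition. So the combined action you invoke does not act transitively, even though $D_\Gamma$ is irreducible. What actually identifies the classes on the open stratum is the monodromy of the family over $V_\Gamma$ (rescaling the differential on a single lower-level component around a loop rotates the prongs on that edge only), which is a genuinely harder statement to set up and is precisely what the paper's closure-and-disjointness argument avoids. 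So the approach as written has a real gap, and its proposed repair would need to be replaced either by the paper's degeneration observation or by a monodromy argument you have not supplied.
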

This, in particular, implies that any non-empty intersection $D_{\Gamma_1}\cap\ldots\cap D_{\Gamma_i}$ of boundary divisors is irreducible.
\begin{proof}
  Since purely horizontal level graphs are already covered by \cref{irreducibility_divisor}, we assume that $\Gamma$ has at least two levels. Then we degenerate $\Gamma$ to $\Delta$ (without changing the underlying dual graph) so that each level of $\Delta$ is connected. That is, this degeneration only involves moving the various connected components of a particular level up or down. Then, as proven in the preceding proposition, $D_\Delta$ is irreducible. However, any irreducible component of $D_\Gamma$ must admit a degeneration to differentials compatible with the level graph $\Delta$, so $D_\Delta$ is contained in the intersection of the irreducible components of $D_\Gamma$. But since $\Gamma$ contains a vertical edge, by \cref{boundary_intersection}, the components of $D_\Gamma$ have to be pairwise disjoint (see Remark \ref{rmk_disjoint}), and thus $D_\Gamma$ can only have one component.
  \begin{comment}
    Thus, to complete the induction, it remains to prove that the general fiber of $\pi|_{D_\Gamma}:D_\Gamma\rightarrow\delta_\Gamma$ is connected for the level graph $\Gamma$ without a horizontal edge (where $\pi$ is the morphism $\overline{B}\rightarrow \overline{M}_{0,n}$). For a general point $x\in D_\Gamma$, the fiber $\pi^{-1}(x)$ will usually be bigger than the fiber $(\pi|_{D_\Gamma})^{-1}(x)$. The irreducible components of $\pi^{-1}(x)$ are in corresponcence with certain ``terminal'' level graphs; we pick out a component that contains $(\pi|_{D_\Gamma})^{-1}(x)$ and corresponds to the terminal level graph $\Lambda$. In particular, $\Gamma$ and $\Lambda$ have same underylying dual graph and $D_\Gamma\subset D_\Lambda$. We know $F_\Lambda\coloneqq(\pi|_{D_\Lambda})^{-1}(x)$ is connected and is finitely covered by a toric variety, say for a torus $T_\Lambda$. On the other hand, $(\pi|_{D_\Gamma})^{-1}(x)$ is a $T_\Lambda$-invariant subvariety of $F_\Lambda$. Then  This completes the proof of the proposition.
    \end{comment}
\end{proof}
\begin{remark}
In higher genus, there trivially exist vertical divisors that are reducible. As an example, consider $\mu=(1,3,-4)$. Then by \cite[Prop. 3.3]{cc14}, the moduli space $\overline{B}=\mathbb{P}\Xi\overline{\mathcal{M}}_{1,3}(\mu)$ is irreducible. Now, consider the enhanced level graph $\Gamma$
\begin{figure}[H]
  \centering
\begin{tikzpicture}[main/.style = {draw, circle}] 
  \node[main, minimum size=2mm,pin=90:$-4$] (1) {$1$};
  \node[main, minimum size=2mm,pin=-135:$1$, pin=-45:$3$] (2) [below of=1]{$0$};
  \draw (1) to [out=-90,in=90] (2);
\end{tikzpicture}.
\end{figure}
Then $D_\Gamma$ has two irreducible components. Indeed, the top level stratum of $D_\Gamma$ is isomorphic to the stratum $\mathbb{P}\Xi\overline{\mathcal{M}}_{1,2}(4,-4)$, which, by \cite[Prop. 3.2]{cc14}, has two irreducible components.
\end{remark}
\section{Computation of rational cohomology}\label{rational_cohomology}
The main objective of this section is to prove \cref{main-thm}. For this, we will construct a stratification of the moduli space $\overline{B}$ into locally closed subspaces with CKgP. Because of the property \ref{p5} in the section \ref{ckgp_overview}, whether we produce a stratification for the stack or its coarse moduli space is immaterial, however for consistency, we will stick with the coarse space $\overline{B}$. For each boundary stratum $D_\Gamma$ (associated to the level graph $\Gamma$), we denote by $D_\Gamma^o\subset D_\Gamma$ the open substratum consisting of differentials compatible with $\Gamma$ but none of its degenerations (that is, both the degenerations of $\Gamma$ that change the underlying dual graph as well as the degenerations of $\Gamma$ that only change the level structure, but keep the dual graph fixed, are excluded from $D_\Gamma^o$). Similarly, $\delta_\Gamma$ will denote the image of $D_\Gamma$ in $\overline{M}_{0,n}$ and $\delta_\Gamma^o\subset \delta_\Gamma$ the image of $D_\Gamma^o$ in $\overline{M}_{0,n}$.
\begin{proposition}
  For each open boundary stratum $D_\Gamma^o$, there is a finite and surjective morphism $\delta_\Gamma^o\times (\mathbb{C}^*)^d\rightarrow D_\Gamma^o$, where $d$ is the dimension of a generic fiber of the morphism $\pi|_{D_\Gamma^o}$.
\end{proposition}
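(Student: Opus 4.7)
The plan is to exploit the product structure of generalized strata from \cref{product_stratum} together with the rigidity of genus-zero differentials. By \cref{product_stratum} and \cref{degree_product_stratum}, there is a finite correspondence
\[
V_\Gamma \xleftarrow{p_\Gamma^\Gamma} U_\Gamma^s \xrightarrow{c_\Gamma^\Gamma} D_\Gamma^o, \qquad V_\Gamma = \prod_{i} V_\Gamma^{[i]};
\]
the strategy is to first construct a finite surjective morphism $\delta_\Gamma^o \times (\mathbb{C}^*)^d \to V_\Gamma$ and then splice through this correspondence.

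For each level $i$, partition its vertices into horizontally-connected components $K_1, \ldots, K_{c_i}$. In genus zero, on each component $C_v$ with fixed positions of its marked points and nodes, the space of meromorphic differentials with the prescribed orders of zeros and poles is exactly one-dimensional (parametrized by a scalar $\lambda_v \in \mathbb{C}^*$). Residue-matching at the horizontal edges of each $K$ imposes linear constraints tying together the scalars within $K$, leaving one free $\mathbb{C}^*$ per horizontally-connected component. Modding out by the overall level-wise $\mathbb{C}^*$ scaling (the projectivization in $\mathbb{P}\Xi\overline{\mathcal{M}}$) leaves $c_i - 1$ free torus parameters. Thus the forgetful morphism $V_\Gamma^{[i]} \to \prod_K \delta_\Gamma^{[i, K], o}$ is a $(\mathbb{C}^*)^{c_i - 1}$-torsor, where each $\delta_\Gamma^{[i, K], o}$ is (up to the finite clutching morphism) isomorphic to $\prod_{v \in K} M_{0, n_v}$.

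Because each $M_{0, n_v}$ is a hyperplane-arrangement complement in an affine space and so has trivial Picard group, every $(\mathbb{C}^*)^{c_i - 1}$-torsor over $\prod_K \delta_\Gamma^{[i, K], o}$ trivializes, yielding a finite surjective morphism $\prod_K \delta_\Gamma^{[i, K], o} \times (\mathbb{C}^*)^{c_i - 1} \to V_\Gamma^{[i]}$. Taking the product over all levels and using the natural identification $\prod_{i, K} \delta_\Gamma^{[i, K], o} \cong \prod_v M_{0, n_v}$ together with the finite clutching morphism $\prod_v M_{0, n_v} \to \delta_\Gamma^o$, we obtain a finite surjective morphism $\delta_\Gamma^o \times (\mathbb{C}^*)^d \to V_\Gamma$ with $d = \sum_i (c_i - 1)$. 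Collapsing each horizontally-connected component of $\Gamma$ to a single vertex produces a tree with $\sum_i c_i$ vertices and $|E_v(\Gamma)|$ edges, so $d = |E_v| - L$, which matches the generic fiber dimension $\dim D_\Gamma^o - \dim \delta_\Gamma^o = (n - 3 - L - h) - (n - 3 - |E|) = |E_v| - L$ of $\pi|_{D_\Gamma^o}$.

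Finally, splice this with the correspondence: when the prong-matching equivalence class is unique (as in \cref{prongs_unique} and its extension to horizontal edges, which applies in particular after collapsing $\Gamma$ to the case of connected levels), we obtain a direct finite morphism $V_\Gamma \to D_\Gamma^o$ which, precomposed with $\delta_\Gamma^o \times (\mathbb{C}^*)^d \to V_\Gamma$, yields the claimed morphism; in the general case the additional discrete prong data can be absorbed into the torsor trivialization at the cost of enlarging the finite covers. The main technical hurdle is establishing the triviality of the $(\mathbb{C}^*)^{c_i - 1}$-torsor, which reduces to the Picard-triviality of the open part of $M_{0, n}$; the remainder of the argument is dimension/graph bookkeeping combined with the product-structure setup from \cref{product_stratum}.
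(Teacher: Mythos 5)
Your level-wise analysis (one free scalar per horizontally-connected component after residue matching, the dimension bookkeeping $d=\sum_i(c_i-1)=|E_v|-L$, and the trivialization of the torus torsor via triviality of $\operatorname{Pic}$ of products of $M_{0,n_v}$) is sound and closely parallels what the paper does level by level. The genuine gap is the final splicing step. A morphism $V_\Gamma\to D_\Gamma^o$ exists only when $\deg p_\Gamma^\Gamma=1$, i.e.\ when there is a unique prong-matching equivalence class, and in genus $0$ this can fail for exactly the graphs you need: \cref{prongs_unique} covers only graphs in which each level is connected, and your parenthetical claim that it ``applies after collapsing $\Gamma$ to the case of connected levels'' is a non sequitur, since collapsing produces a different graph and a different stratum. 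Concretely, for a cherry whose two edges both have enhancement $\kappa\geq 2$, the level rotation group $\mathbb{Z}$ acts diagonally on $\mathbb{Z}/\kappa\times\mathbb{Z}/\kappa$, so there are $\kappa$ prong-matching classes; such cherries are realizable for many genus-$0$ profiles $\mu$. In that situation you only have the correspondence $V_\Gamma\leftarrow U_\Gamma^s\rightarrow D_\Gamma^o$, and pulling your map back along $p_\Gamma^\Gamma$ replaces the source by a finite cover of $\delta_\Gamma^o\times(\mathbb{C}^*)^d$, which is not what the proposition asserts; saying the discrete prong data ``can be absorbed by enlarging the finite covers'' is exactly the unproved point, since you would have to show the enlarged cover is again of the form $\delta_\Gamma^o\times(\mathbb{C}^*)^d$ (e.g.\ pulled back from an isogeny of the torus factor).

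The paper avoids $V_\Gamma$ altogether and this is how it sidesteps the issue: it writes each differential explicitly as $f(z)\,dz$ with $f$ a ratio of linear factors, producing a holomorphic section $\sigma$ of $D_\Gamma^o\to\delta_\Gamma^o$ with a fixed choice of prong matching, and then acts by the subtorus $T_{RC}\subset(\mathbb{C}^*)^{V(\Gamma)}$ of vertex-wise rescalings preserving the horizontal residue matching. The crucial feature your route loses by passing to $V_\Gamma$ (which forgets prong matchings) is that the monodromy of this vertex-wise torus twists the prong matchings at every edge adjacent to a vertex, not only level passage by level passage, and hence connects the different prong-matching equivalence classes; after replacing $T_{RC}$ by a finite cover (still a torus) the action is transitive on the fibers of $D_\Gamma^o\to\delta_\Gamma^o$, and quotienting by the level-rotation directions yields the finite surjection $\delta_\Gamma^o\times(\mathbb{C}^*)^d\to D_\Gamma^o$. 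To repair your argument you would need to establish this transitivity (or an equivalent statement about the prong-matching covering being induced from a torus isogeny); as written, your proof only establishes the proposition for strata with a unique prong-matching equivalence class.
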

\begin{proof}
  A point of $D_\Gamma^o$ is represented by the data of a nodal stable curve $C$ of genus 0 and marked points $p_1,\dots,p_n$, along with a collection of differentials, one for each component of $C$, with zeros and poles of prescribed order dictated by $\mu$ and compatible with the level graph $\Gamma$ (with opposite residues on the horizontal edges), and compatible prong matchings between the edges connected by a vertical edge. Each component of $C$ is a smooth rational curve, and the data of half edges (along with the order of zeros and poles) on the vertex corresponding to that component will define an unprojectivized moduli space of abelian differentials over a smooth genus 0 curve. Any such abelian differential can be written as $f(z)dz$, where $z$ is a global coordinate on $\mathbb{P}^1$ (which can be chosen to vary holomorphically over $\delta_\Gamma^o$) and $f(z)$ is a rational function with zeros and poles prescribed by the half edges and their enhancements. Suppose the $p^{th}$ level of an element in $D_\Gamma^o$ has connected components $C_1,\ldots,C_m$; each $C_i$ is a union of rational components connected by horizontal edges. A compatible twisted differential on $C_i$ will then be a collection of differentials $\omega_i=\{\omega_{i,j}\}$ of the form $f_{i,j}(z)dz$ on each irreducible component $R_{i,j}$ of $C_i$, where $f_{i,j}(z)$ on each irreducible component is chosen so that each horizontal edge will have a pole with opposite residues on the two vertices connected by the edge. Then all compatible twisted differentials on $C_i$ will simply be scalar multiples of $\omega_i$. Note that up to a scalar factor (say $K$), the rational function $f_{i,j}(z)$ is a quotient of a product of linear factors $(z-a)$ (where zeros appear in the numerator and poles in the denominator, and the factors are repeated as indicated by the order, of course). As we vary the curve $C_i$ holomorphically in the moduli, the residues of the simple poles at the horizontal edges also vary holomorphically. Consequently, the factor $K$ required to make sure the two sides of a horizontal edge have opposite residues, will also vary holomorphically. Doing this for each level, we obtain a section $\sigma$ of the morphism $D_\Gamma^o\rightarrow\delta_\Gamma^o$ (for prong-matching, we fix any choice of orientation-reversing bijection between the outgoing prongs on the higher vertex and the incoming prongs on the lower vertex for each vertical edge).
  
  Recall that there is an action of a torus $(\mathbb{C}^*)^{V(\Gamma)}$ on the space of multiscale differentials compatible with $\Gamma$ that acts by rescaling the differentials on each vertex of $\Gamma$ individually. In general, this does not respect the residue condition on the horizontal edges, so there is a subtorus $T_{RC}\subset (\mathbb{C}^*)^{V(\Gamma)}$ that preserves the differentials in a fiber of $D_\Gamma^o\rightarrow\delta_\Gamma^o$. But this torus does not bring the prong matchings into consideration -- a loop that is non-trivial in the fundamental group of $T_{RC}$ might return to the same differential but with a different prong matching belonging to a different equivalence class. So, one might need to take a finite etale cover of $T_{RC}$ to make sure the torus acts naturally; however we will continue to denote this torus also by $T_{RC}$. The actions of various such tori and how they interact with the prong-matchings is discussed in detail in \cite[Sec. 5]{bcggm}. Note that the torus $T_{RC}$ acts transitively on the fibers, so using the section $\sigma$ mentioned above, we obtain a surjective morphism $\eta:\delta_\Gamma^o\times T_{RC}\rightarrow D_\Gamma^o$. The fiber of this morphism will be a torus isogeneous to $(\mathbb{C})^{L(\Gamma)}$ acting by rescaling the differentials level by level. Thus $\eta$ factors through a finite surjective morphism $T\times \delta_\Gamma^o\rightarrow D_\Gamma^o$ for a torus $T\cong(\mathbb{C}^*)^d$ of appropriate dimension. In fact this torus is analogous to the one constructed in \cite{ccm22} to verify that each component of a fiber of $\overline{B}\rightarrow \overline{M}_{0,n}$ is covered by a toric variety.
  \end{proof}
Since $\delta_\Gamma^o$ is a product of $M_{0,i}$, which has the CKgP, and a torus, being an open subset of the affine space, also has the CKgP (see Property \ref{p1} of CKgP in section \ref{ckgp_overview}), it follows from Properties \ref{p2} and \ref{p4} of CKgP that the open boundary stratum $D_\Gamma^o\subset\overline{B}$ also has the CKgP. This implies $\overline{B}$ admits a stratification by spaces with CKgP, and thus by Property \ref{p3}, $\overline{B}$ itself has the CKgP. Since $\overline{\mathcal{B}}$ is a smooth and proper Deligne Mumford stack, using Property \ref{p6} of CKgP, yields:
\begin{proposition}
  The rational Chow and cohomology rings of $\overline{B}$ are isomorphic: $CH^*(\overline{B})\otimes\mathbb{Q}\cong H^*(\overline{B},\mathbb{Q})$. In particular, $\overline{B}$ has no odd rational cohomology.
\end{proposition}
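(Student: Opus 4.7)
The plan is to chain together the properties of CKgP listed in \cref{ckgp_overview}, using the finite cover of each open stratum established in the preceding proposition. The argument consists of three stages: establishing CKgP for the individual open strata, assembling them into CKgP for $\overline{B}$, and invoking the consequence for smooth proper DM stacks.

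First, I would verify that every open boundary stratum $D_\Gamma^o$ has the CKgP. The image $\delta_\Gamma^o\subset\overline{M}_{0,n}$ is a product of open moduli spaces $M_{0,n_i}$; each $M_{0,n_i}$ inherits CKgP from $\overline{M}_{0,n_i}$ (which has CKgP by \cite{cl22}) via Property \ref{p1}. The torus $(\mathbb{C}^*)^d$, being an open subset of affine space, has CKgP by the same property. Property \ref{p4} then yields CKgP for the product $\delta_\Gamma^o\times(\mathbb{C}^*)^d$. Combining this with the finite surjective morphism $\delta_\Gamma^o\times(\mathbb{C}^*)^d\rightarrow D_\Gamma^o$ from the preceding proposition and invoking Property \ref{p2} transfers CKgP to $D_\Gamma^o$.

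Second, the collection $\{D_\Gamma^o\}_{\Gamma}$, as $\Gamma$ ranges over the enhanced level graphs compatible with $\mu$ (together with the interior corresponding to the trivial graph), forms a finite stratification of $\overline{B}$ by locally closed subspaces each having CKgP. Property \ref{p3} then yields CKgP for $\overline{B}$ itself, and Property \ref{p5} transfers this to the smooth proper Deligne-Mumford stack $\overline{\mathcal{B}}$. Since $\overline{\mathcal{B}}$ is smooth, proper, and Deligne-Mumford, Property \ref{p6} immediately gives the ring isomorphism $CH^*(\overline{\mathcal{B}})\otimes\mathbb{Q}\cong H^*(\overline{\mathcal{B}},\mathbb{Q})$ and the vanishing of odd rational cohomology. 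The statement for the coarse space $\overline{B}$ follows from the standard comparison that rational Chow and rational cohomology of a DM stack agree with those of its coarse moduli space.

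The substantive geometric content has already been carried out in the preceding proposition, where the finite cover of $D_\Gamma^o$ by $\delta_\Gamma^o\times(\mathbb{C}^*)^d$ was constructed; with that in hand, the proof of this proposition is essentially a bookkeeping exercise that assembles the listed CKgP properties in the correct order. The only genuine external input is the theorem that $\overline{M}_{0,n}$ has CKgP, and there is no real obstacle beyond verifying that the stratification of $\overline{B}$ by open strata is finite, which is clear because the set of enhanced level graphs compatible with $\mu$ in genus zero is finite.
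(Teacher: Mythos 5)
Your proposal is correct and follows essentially the same route as the paper: establish CKgP for each open stratum $D_\Gamma^o$ via the finite cover $\delta_\Gamma^o\times(\mathbb{C}^*)^d\rightarrow D_\Gamma^o$ together with Properties \ref{p1}, \ref{p2}, \ref{p4}, assemble via the stratification and Property \ref{p3}, pass between the coarse space and the smooth proper stack $\overline{\mathcal{B}}$ via Property \ref{p5}, and conclude with Property \ref{p6}. No gaps to report.
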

Next, we will prove that the rational Chow ring is isomorphic to the tautological ring $R^*(\overline{B})$. Recall from \cite[Sec. 8]{cmz22} that the tautological rings of strata are defined as the smallest set of $\mathbb{Q}$-algebras $R^*(\mathbb{P}\Xi\overline{\mathcal{M}}_{g,n}(\mu))\subset CH^*(\mathbb{P}\Xi\overline{\mathcal{M}}_{g,n}(\mu))\otimes\mathbb{Q}$ which:
\begin{itemize}
\item contain the $\psi$-classes for each marked point,
\item is closed under the pushforward of the map forgetting a marked point of order zero, and
\item is closed under the maps $(c_\Gamma)_*(p^{[i]}_\Gamma)^*$ that we introduced in \cref{product_stratum}, for each enhanced level graph $\Gamma$.
\end{itemize}
We will define the tautological ring of any (closed) boundary stratum $D_\Lambda$ analogously (in the third bullet point, we only consider the enhanced level graphs that are degenerations of $\Lambda$, of course). Similarly, the tautological ring of an open subset (either of $\mathbb{P}\Xi\overline{\mathcal{M}}_{g,n}(\mu)$ or a boundary stratum) will be defined as the restriction of the tautological ring from $\mathbb{P}\Xi\overline{\mathcal{M}}_{g,n}(\mu)$ or the boundary sratum, respectively.

Now, we specialize to genus $g=0$.
\begin{proposition}
  The rational Chow and tautological rings of $\overline{B}$ are isomorphic: $CH^*(\overline{B})\otimes\mathbb{Q}\cong R^*(\overline{B})$.
\end{proposition}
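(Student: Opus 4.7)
The inclusion $R^*(\overline{B})\subseteq CH^*(\overline{B})\otimes\mathbb{Q}$ holds tautologically from the definition of the tautological ring as a subalgebra of the rational Chow ring, so the substance of the proposition lies in the reverse inclusion. My plan is to prove this in two stages: first, that $CH^*(\overline{B})\otimes\mathbb{Q}$ is $\mathbb{Q}$-linearly spanned by the fundamental class $[\overline{B}]$ together with the classes $[D_\Gamma]$ of the closed boundary strata; second, that each such $[D_\Gamma]$ already lies in $R^*(\overline{B})$.

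For the first stage, I would invoke the stratification $\overline{B}=\coprod_\Gamma D_\Gamma^o$ by open boundary strata constructed at the start of this section. The key computation is that each open stratum has rank-one rational Chow group: since $\delta_\Gamma^o$ is isomorphic to a product of open moduli spaces $M_{0,n_v}$ (one per vertex of the dual graph of $\Gamma$, each being a Zariski open subset of an affine space), and since the torus $(\mathbb{C}^*)^d$ has rational Chow concentrated in degree zero, a rational K\"unneth for these trivial-Chow factors gives $CH^*(\delta_\Gamma^o\times(\mathbb{C}^*)^d)\otimes\mathbb{Q}=\mathbb{Q}$. The finite surjective morphism $\delta_\Gamma^o\times(\mathbb{C}^*)^d\to D_\Gamma^o$ just constructed then induces a surjection on rational Chow groups (choosing, for each subvariety of $D_\Gamma^o$, an irreducible component of its preimage that dominates it), so $CH_*(D_\Gamma^o)\otimes\mathbb{Q}=\mathbb{Q}\cdot[D_\Gamma^o]$; this class is well-defined thanks to the irreducibility of $D_\Gamma$ established in \cref{irreducible_boundary}. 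Iterated excision through the strata, ordered by increasing codimension, then yields that $CH_*(\overline{B})\otimes\mathbb{Q}$ -- equivalently, by smoothness of the stack $\overline{\mathcal{B}}$, the rational Chow ring $CH^*(\overline{B})\otimes\mathbb{Q}$ -- is $\mathbb{Q}$-spanned by the classes $[D_\Gamma]$, where $[\overline{B}]$ itself arises from the interior $B=M_{0,n}$.

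For the second stage, I would exploit the finite morphisms $c_\Gamma\colon D_\Gamma^s\to D_\Gamma$ and $p_\Gamma^{[i]}\colon D_\Gamma^s\to B_\Gamma^{[i]}$ from \cref{product_stratum}. The unit class $1\in R^0(B_\Gamma^{[i]})$ pulls back via $p_\Gamma^{[i]}$ to the unit class on $D_\Gamma^s$, which in turn pushes forward via $c_\Gamma$ (followed by the closed immersion $D_\Gamma\hookrightarrow\overline{B}$) to $\deg(c_\Gamma)\cdot[D_\Gamma]$. This gives the formula
\[
  [D_\Gamma]\;=\;\tfrac{1}{\deg(c_\Gamma)}\,(c_\Gamma)_*\,(p_\Gamma^{[i]})^*\,1_{B_\Gamma^{[i]}}
\]
in $CH^*(\overline{B})\otimes\mathbb{Q}$. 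Since $R^*(\overline{B})$ is by definition closed under the operations $(c_\Gamma)_*(p_\Gamma^{[i]})^*$ and contains the unit class, this displays $[D_\Gamma]$ as a tautological class, which combined with the first stage completes the argument.

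The most delicate point in my view is not any single step in isolation but the combination: one must align the chosen stratification with the ordering used in excision (so that successive unions of strata are genuinely closed), verify the K\"unneth vanishing uniformly across all enhanced level graphs $\Gamma$, and track through the finite covers $\delta_\Gamma^o\times(\mathbb{C}^*)^d\to D_\Gamma^o$ that the unique generator of $CH_*(D_\Gamma^o)\otimes\mathbb{Q}$ pushes forward to precisely $[D_\Gamma]$ in $CH^*(\overline{B})\otimes\mathbb{Q}$ (up to the nonzero rational scalar absorbed in the pushforward formula). Once this bookkeeping is in place, the tautological identification of each $[D_\Gamma]$ is essentially formal.
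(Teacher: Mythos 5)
Your argument is correct, and its first half coincides with the paper's: both rest on the stratification of $\overline{B}$ by the open boundary strata $D_\Gamma^o$, the fact that each $D_\Gamma^o$ has rank-one rational Chow group because it is a finite surjective image of $\delta_\Gamma^o\times(\mathbb{C}^*)^d$ (with the K\"unneth surjectivity justified, as the paper does, by the CKgP of a factor -- you should say this explicitly rather than just invoking ``a rational K\"unneth''), and the localization sequence. Where you genuinely diverge is the second half. The paper runs an induction on dimension over the \emph{closed} boundary strata, defining a tautological ring for each $D_\Lambda$ and proving $CH^*=R^*$ stratum by stratum (a ``filling criterion'' in the style of Canning--Larson); you instead extract from the excision argument the purely additive statement that $CH_*(\overline{B})_{\mathbb{Q}}$ is spanned by the classes $[D_\Gamma]$, and then exhibit each such class directly as a tautological class via $[D_\Gamma]=\tfrac{1}{\deg(c_\Gamma)}(c_\Gamma)_*(p_\Gamma^{[i]})^*\,1$, using \cref{product_stratum} and the closure of $R^*$ under these operations. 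This is a legitimate shortcut: you never need tautological rings of boundary strata, and you do not need the actual value of $\deg(c_\Gamma)$ (which, per \cref{degree_product_stratum}, is not easy to write down) -- only that the pushforward is a nonzero multiple of $[D_\Gamma]$, which follows from finiteness and surjectivity of $c_\Gamma$ together with the irreducibility of $D_\Gamma$ from \cref{irreducible_boundary}. What the paper's longer induction buys is the stronger intermediate statement that every closed boundary stratum itself satisfies $CH^*=R^*$, which is then reused in the subsequent proposition on generation by boundary divisors; what your route buys is a cleaner proof of the present proposition that also makes explicit the additive spanning of the Chow group by boundary strata classes, anticipating the discussion of relations in \cref{relations_cohomology}.
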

\begin{proof}
  The idea is similar to \cite[Lemma 4.1]{cl22} (\textit{Filling Criteria: version 1}). As observed above, all boundary strata of $\overline{B}$ have CKgP and provide a stratification of $\overline{B}$ (thereby also proving $\overline{B}$ has CKgP). We will next prove that each boundary stratum also satisfies $CH^*=R^*$.
  
  First note that the interior $B\coloneqq\overline{B}\setminus\partial\overline{B}$ is equal to $M_{0,n}$ which is a Zariski open subset of the affine space $\mathbb{C}^{n-3}$. So the Chow ring of $M_{0,n}$ (and thus of $B$) is isomorphic to $\mathbb{Q}$ generated by its fundamental class; in particular it is tautological.
  
  Similarly, the interior $D_\Gamma^o$ of a boundary stratum $D_\Gamma$ also satisfies $CH^*=R^*$ since, as discussed above, $D_\Gamma^o$ is a finite image of $(\mathbb{C}^*)^r\times\delta_\Gamma^o$ which also has Chow ring isomorphic to $\mathbb{Q}$ (note that by the CKgP, the morphism $CH^*((\mathbb{C}^*)^r)\otimes CH^*(\delta_\Gamma^o)\rightarrow CH^*((\mathbb{C}^*)^r\times\delta_\Gamma^o)$ is surjective, and both $(\mathbb{C}^*)^r$ and $\delta_\Gamma^o$ have Chow ring isomorphic to $\mathbb{Q}$, being an open subset of the affine space).
  
  Now, we prove that for each $\Gamma$, the boundary stratum $D_\Gamma$ has its rational Chow ring isomorphic to its tautological ring, by induction on the dimension of $D_\Gamma$. The base case ($\text{dim}=0$) is vacuous. Now, assume $D_\Gamma$ is an $r$-dimensional boundary stratum represented by the level graph $\Gamma$. Then $D_\Gamma$ admits a stratification into open boundary substrata (as well as its interior $D_\Gamma^o$) such that each has CKgP and satisfies $CH^*=R^*$. Further, by induction hypothesis, each closed boundary substratum of $D_\Gamma$ also has CKgP and satisfies $CH^*=R^*$. By the localization sequence for the Chow groups (\cite[Prop. 1.8]{fulton}), there is a surjective morphism $CH_k(D_\Gamma\setminus D_\Gamma^o)\oplus CH_k(D_\Gamma^o)\rightarrow CH_k(D_\Gamma)$. Since both direct summands on left side are generated by tautological classes, it follows that the Chow ring of $D_\Gamma$ is also generated by the tautological classes, thereby completing the proof.
\end{proof}
With the application of \cref{irreducible_boundary}, we obtain:
\begin{proposition}
  The rational Chow ring (and thus, also rational cohomology ring) of $\overline{B}$ is generated by the boundary divisors.
\end{proposition}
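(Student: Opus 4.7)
The plan is to reduce to showing that each closed boundary stratum class $[D_\Gamma]$ is a product of boundary divisor classes, and then invoke the irreducibility results proven earlier. The argument splits into two steps: \emph{additive generation} of $CH^*(\overline{B})_\mathbb{Q}$ by the stratum classes $\{[D_\Gamma]\}$, and \emph{multiplicative expression} of each such $[D_\Gamma]$ as a monomial in divisor classes.

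For additive generation I would use iterated localization together with the CKgP. Each open stratum $D_\Gamma^o$ admits a finite surjection from $\delta_\Gamma^o \times (\mathbb{C}^*)^d$, established in the first proposition of \cref{rational_cohomology}. Since $\delta_\Gamma^o$ is a product of spaces $M_{0,n_i}$, which are open in affine space, and the torus factor is also open in affine space, both have rational Chow groups concentrated in top dimension. By Property \ref{p4} of CKgP (applied to the product), the same holds for $\delta_\Gamma^o \times (\mathbb{C}^*)^d$. Rational surjectivity of pushforward along finite surjections (via the identity $f_*f^* = (\deg f)\cdot\mathrm{id}$) then yields $CH_k(D_\Gamma^o)_\mathbb{Q} = 0$ for $k < \dim D_\Gamma^o$. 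Iterating the localization exact sequence over the stratification of $\overline{B}$ by closed strata of increasing codimension gives that $CH_*(\overline{B})_\mathbb{Q}$ is $\mathbb{Q}$-spanned by the classes $\{[D_\Gamma]\}$ (including $[\overline{B}]$ for the interior).

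For the multiplicative step, let $\Gamma$ have codimension $r$, and let $\Gamma_1,\ldots,\Gamma_r$ be its $r$ divisorial undegenerations (one per horizontal edge, one per level passage). By \cref{unique_graph}, $\Gamma$ is the unique codimension-$r$ level graph compatible with $D_{\Gamma_1}\cap\cdots\cap D_{\Gamma_r}$, and by \cref{irreducible_boundary}, $D_\Gamma$ is irreducible. Hence the set-theoretic intersection equals $D_\Gamma$ and has the expected codimension $r$. Since $\overline{\mathcal{B}}$ is a smooth proper DM stack, the intersection product $[D_{\Gamma_1}]\cdots[D_{\Gamma_r}]$ refines to a positive cycle supported on this irreducible intersection, and therefore equals $m\cdot[D_\Gamma]$ for some positive rational multiplicity $m$. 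This exhibits $[D_\Gamma]$ as an element of the subring generated by divisor classes, and combined with the additive generation above completes the proof.

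The step I expect to be the main obstacle is the additive generation, specifically verifying that $CH_*(D_\Gamma^o)_\mathbb{Q}$ is concentrated in top dimension. This requires combining the CKgP-based K\"unneth surjection for the cover $\delta_\Gamma^o \times (\mathbb{C}^*)^d$ with the rational surjectivity of pushforward under the finite morphism to $D_\Gamma^o$; both ingredients are essentially standard, but the interaction with the stacky structure needs care. By contrast the multiplicative step is mostly bookkeeping: positivity of $m$ is automatic from irreducibility plus correct codimension on a smooth DM stack.
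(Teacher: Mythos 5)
Your proposal is correct, but the multiplicative step takes a genuinely different route from the paper. The paper first proves that $CH^*(\overline{B})\otimes\mathbb{Q}$ is tautological and then runs an induction on the dimension of boundary strata: any class on $D_\Gamma$ is supported on its boundary, hence comes from a one-level degeneration $D_\Lambda$, whose own boundary divisors are identified via \cref{unique_graph} and \cref{irreducible_boundary} with restrictions of boundary divisors of $\overline{B}$, and one concludes with the projection formula. Your additive step is essentially the same as the paper's (the finite surjection $\delta_\Gamma^o\times(\mathbb{C}^*)^d\rightarrow D_\Gamma^o$, the CKgP K\"unneth surjection, and excision/localization), but your second step replaces the induction by the direct identity $[D_{\Gamma_1}]\cdots[D_{\Gamma_r}]=m\,[D_\Gamma]$ with $m>0$, which is cleaner and in addition records that every closed stratum class is, up to a positive rational factor, a monomial in divisor classes. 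To make it airtight you should spell out two points you currently assert implicitly: (i) the divisorial undegenerations $\Gamma_1,\ldots,\Gamma_r$ are pairwise \emph{distinct} divisors --- for the vertical ones this follows from the ``moreover'' clause of \cref{boundary_intersection} because $D_\Gamma\neq\emptyset$, and for horizontal ones because distinct edges of a stable tree cut out distinct partitions of the legs; and (ii) the intersection $\bigcap_iD_{\Gamma_i}$ has \emph{pure} expected codimension $r$, which follows since a stratum contained in $r$ pairwise distinct boundary divisors has at least $r$ distinct divisorial undegenerations and hence codimension at least $r$, while every component of an intersection of $r$ divisors has codimension at most $r$. Granting these, \cref{unique_graph} and \cref{irreducible_boundary} give $\bigcap_iD_{\Gamma_i}=D_\Gamma$ set-theoretically, and positivity of multiplicities for proper intersections on the smooth proper DM stack $\overline{\mathcal{B}}$ (with rational coefficients, transferred to the coarse space) indeed yields $m>0$, so the argument goes through.
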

\begin{proof}
  Since the Chow ring of $\overline{B}$ is tautological, every rational cycle class in $\overline{B}$ is a pushforward of a cycle class in the boundary. Inducting on the dimension, we will prove that every irreducible boundary stratum $D_\Gamma$ has its rational Chow ring generated by its boundary divisors, where by boundary divisor of $D_\Gamma$, we refer to the divisors in $D_\Gamma$ obtained as its proper intersection with a boundary divisor in $\overline{B}$. The base case ($\text{dim}=0$) is clear. Now, for the induction step, take a boundary stratum $D_\Gamma$ and consider an irreducible cycle $Z\subset D_\Gamma$. Since the interior of $D_\Gamma$ has the Chow ring isomorphic to $\mathbb{Q}$, we can assume $Z$ is contained in the boundary; in particular, take $Z\subset D_\Lambda$ for a one level degeneration $\Lambda$ of $\Gamma$. So, by induction hypothesis, $D_\Lambda$ has its Chow ring generated by boundary divisors. But the boundary divisors in $D_\Lambda$ are boundary strata in $D_\Gamma$ represented by appropriate level graphs obtained as codimension one degeneration of $\Lambda$, so by Lemma \ref{unique_graph} and Proposition \ref{irreducible_boundary}, are restrictions of boundary divisors of $D_\Gamma$ into $D_\Lambda$, which completes the proof.
\end{proof}
This also concludes the proof of \cref{main-thm}.
\begin{remark}
  The applicability of CKgP is very special to genus 0. In fact, already in genus 1 and $n=2$, the CKgP fails for the moduli space of differentials $\overline{\mathcal{B}}\coloneqq \mathbb{P}\Xi\overline{\mathcal{M}}_{1,2}(p,-p)$ if $p$ is large enough. Indeed, assuming $p$ is prime, this moduli space is the classical modular curve $X_1(p)$ whose genus grows to infinity as $p\rightarrow\infty$ -- see \cite{modular} (or \cite{tahar17} for flat-geometric proof). Then by the property \ref{p6} of CKgP in \cref{ckgp_overview}, it cannot have the CKgP.\\
  Actually, even when the coarse moduli space $\overline{B}$ of $\mathbb{P}\Xi\overline{\mathcal{M}}_{1,n}(\mu)$ is birational to $\overline{M}_{1,n-1}$ and $n-1\leq 10$ (the range where $\overline{M}_{1,n-1}$ has the CKgP, see \cite[Section 5]{cl22}), we cannot expect the techniques we used in this paper to be applicable. Indeed, if $\mu=(1,m,-m-1)$, then $\overline{B}$ admits a boundary divisor associated to the enhanced level graph
  \begin{figure}[H]
  \centering
\begin{tikzpicture}[main/.style = {draw, circle}] 
  \node[main, minimum size=2mm,pin=-90:$m$] (1) {$1$};
  \node[main, minimum size=2mm,pin=135:$1$, pin=45:$-m-1$] (2) [above of=1]{$0$};
  \draw (1) to [out=90,in=-90] (2);
\end{tikzpicture},
\end{figure}
\noindent which has a component isomorphic to the modular curve $X_1(m)$. So, the stratification of $\overline{B}$ into boundary strata alone is not sufficient to show that $\overline{B}$ has the CKgP when $m$ becomes large enough.
\end{remark}

\subsection{Relations in the cohomology}\label{relations_cohomology}
It is proven in \cite{km94} that the space of relations between the codimension $i$ boundary strata in the cohomology group $H^{2i}(\overline{M}_{0,n})$ is \textit{additively} generated by the pullbacks of the WDVV relations from $\overline{M}_{0,4}$. Our aim in this subsection is to prove the analogous statement on the space of relations in the cohomology ring of the moduli space of differentials $\overline{B}$ in genus 0. Our motivation for the argument instead comes from the Mixed Hodge Theoretic argument used in \cite{petersen12} (which, in turn, is based on the ideas of \cite{getzler95}). In particular, we will see that while the space of relations is not necessarily $\mathbb{Q}$-linearly generated by the pullbacks of the WDVV relations from $\overline{M}_{0,n}$, the other generators can be recovered geometrically from the toric variety structure on the fibers of the birational morphism $\pi:\overline{B}\rightarrow\overline{M}_{0,n}$.

The idea is to consider the spectral sequence of a filtration. Indeed, we consider the sequence of subvarieties $T_0\subset T_1\subset\ldots \subset T_{n-3}=\overline{B}$, where $T_p$ is the union of all the boundary strata $D_\Gamma\subset \overline{B}$ of dimension at most $p$. Then we have a spectral sequence in cohomology with compact support (with $\mathbb{Q}$-coefficients, as usual)
\begin{equation}\label{spectral_sequence}
  E_1^{p,q}=H^{p+q}_c(T_p\setminus T_{p-1})\implies H^{p+q}_c(\overline{B}),
\end{equation}
  whose differentials are compatible with the natural Mixed Hodge Structure (MHS). Note that $T_p\setminus T_{p-1}=\sqcup_\Gamma D_\Gamma^o$ is the union of the open boundary strata of dimension exactly $p$ (which are all pairwise disjoint). Of course $\overline{B}$ is compact with only orbifold singularities, so $H_c^{p+q}(\overline{B})=H^{p+q}(\overline{B})$ has pure Hodge structure of weight $p+q$. Consequently, only $gr_{p+q}^WE_1^{p,q}$ survives to the $E_\infty$ page of the spectral sequence, that is, we have to only understand $gr_i^W H^i_c(D_\Gamma^o)$. Poincar\'e Duality gives the perfect pairing
\[H^{2p-i}(D_\Gamma^o)\otimes H^i_c(D_\Gamma^o)\rightarrow H_c^{2p}(D_\Gamma^o)\cong\mathbb{Q},\]
that is compatible with the natural mixed Hodge structures and weights, where the right side has the pure Hodge structure of weight $2p$ given by the fundamental class. Thus, determining $gr_i^WH^i_c(D_\Gamma^o)$ is equivalent to determining $gr_{2p-i}^WH^{2p-i}(D_\Gamma^o)$. We know that the mixed Hodge structure on $k^{\text{th}}$ cohomology group $H^k$ of a complement of a hyperplane arrangement in $\mathbb{C}^N$ is pure of weight $2k$ and of type $(k,k)$ (see \cite{shapiro93}), so using the finite morphism $\delta_\Gamma^o\times (\mathbb{C}^*)^r\rightarrow D_\Gamma^o$, we see that $H^i(D_\Gamma^o)$ also has pure Hodge structure of weight $2i$ and of type $(i,i)$; equivalently, $H^i_c(D_\Gamma^o)$ has pure Hodge structure of weight $2i-2p$. In particular, this implies $gr_i^WH^i(D_\Gamma^o)\neq 0$ if and only if $i=0$; equivalently, $gr_i^WH^i_c(D_\Gamma^o)\neq 0$ if and only if $i=2p$.

Summarizing the above discussion, we have proven the following:
\begin{lemma}
  The spectral sequence (\ref{spectral_sequence}) is a first quadrant spectral sequence on $E_1$-page such that $E_1^{p,q}=0$ for $q\geq p$ and $E_1^{p,q}$ has the pure Hodge structure of weight $2(p+q)-2p=2q$ for $p\geq q$.
\end{lemma}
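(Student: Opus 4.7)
The plan is to consolidate the observations developed in the preceding discussion into three steps: first-quadrant structure, vanishing above the diagonal, and purity of each term. Since $T_p$ is empty for $p < 0$, the spectral sequence is concentrated in $p \geq 0$. For fixed $p$, the space $T_p \setminus T_{p-1} = \sqcup_\Gamma D_\Gamma^o$ is a disjoint union of smooth varieties of complex dimension $p$, because each $D_\Gamma^o$ is the open stratum of a normal crossing boundary divisor in the smooth DM stack $\overline{\mathcal{B}}$. Hence $H^k_c(D_\Gamma^o)$ vanishes for $k > 2p$ and for $k < 0$, giving $E_1^{p,q} = 0$ whenever $q > p$ or $q < -p$; in particular, the spectral sequence is first-quadrant and vanishes above the diagonal $q = p$.

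For the weight claim, I would first establish purity for $H^i(D_\Gamma^o)$ itself. The finite surjective morphism $\delta_\Gamma^o \times (\mathbb{C}^*)^d \to D_\Gamma^o$ constructed in the preceding proposition induces an injection on rational cohomology (compatible with MHS), so it suffices to verify purity for the source. The space $\delta_\Gamma^o$ is a product of $M_{0,n_i}$'s, each of which is realizable as the complement of a hyperplane arrangement in affine space; and $(\mathbb{C}^*)^d$ is itself the complement of the coordinate hyperplane arrangement in $\mathbb{C}^d$. Hence $\delta_\Gamma^o \times (\mathbb{C}^*)^d$ is a hyperplane complement in some $\mathbb{C}^N$, whose rational cohomology in degree $i$ is pure of Hodge--Tate type $(i,i)$ by the cited result of Shapiro. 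Consequently $H^i(D_\Gamma^o)$ is pure of weight $2i$.

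Finally, Poincar\'e duality for the smooth variety $D_\Gamma^o$ of complex dimension $p$ yields a MHS isomorphism $H^k_c(D_\Gamma^o) \cong H^{2p-k}(D_\Gamma^o)^\vee(-p)$. Combining this with the purity of $H^{2p-k}(D_\Gamma^o)$ at weight $2(2p - k)$ and accounting for the Tate twist, we conclude that $H^k_c(D_\Gamma^o)$ is pure of weight $-2(2p - k) + 2p = 2k - 2p$. Setting $k = p+q$ gives the asserted weight $2q$ for $p \geq q$. The main conceptual step is the identification of $\delta_\Gamma^o \times (\mathbb{C}^*)^d$ with the complement of a hyperplane arrangement, which packages the geometry of the toric fibers constructed earlier and reduces the Hodge-theoretic question to a classical purity result; everything else is a standard bookkeeping argument using Poincar\'e duality and dimension counting.
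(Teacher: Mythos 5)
Your argument is essentially the paper's: realize $\delta_\Gamma^o\times(\mathbb{C}^*)^d$ as a hyperplane-arrangement complement, invoke Shapiro's purity (weight $2k$, type $(k,k)$ on $H^k$), transfer to $D_\Gamma^o$ along the finite surjection (injectivity of pullback is fine since $D_\Gamma^o$ is irreducible and normal, being the coarse space of a smooth stack), and convert to compactly supported cohomology by Poincar\'e duality; the weight bookkeeping $2k-2p$, i.e.\ $2q$ at $E_1^{p,q}$, matches the paper exactly. Your vanishing range $q>p$ rather than the stated $q\geq p$ is also the correct one: the diagonal terms $E_1^{p,p}$ are precisely what survives to $E_\infty$ in the paper's subsequent argument (and are nonzero, being top compactly supported cohomology of $p$-dimensional strata), so the ``$\geq$'' in the statement should be read as ``$>$''.

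One concrete slip: your justification of first-quadrantness does not work as written. From ``$H^k_c(D_\Gamma^o)=0$ for $k<0$'' you only get $E_1^{p,q}=0$ for $q<-p$, which says nothing about $-p\leq q<0$; dimension alone does not force $H^k_c$ of a $p$-dimensional variety to vanish in degrees $k<p$ (think of $\mathbb{P}^1$ in degree $0$). What you need is $H^k_c(D_\Gamma^o)=0$ for $k<p$, and this does follow from ingredients you already have: either note that $\delta_\Gamma^o\times(\mathbb{C}^*)^d$ is affine, so $H^i(D_\Gamma^o)=0$ for $i>p$ (Artin vanishing plus the transfer) and dualize; or use the purity you just proved -- $H^{2p-k}(D_\Gamma^o)$ is pure of type $(2p-k,2p-k)$ with $2p-k>p=\dim D_\Gamma^o$, hence zero, equivalently $H^k_c(D_\Gamma^o)$ would be pure of negative weight, hence zero. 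With that one-line patch the proof is complete and coincides with the paper's.
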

The pure weights at $E_1^{p,q}$ of the spectral sequence is illustrated as follows:
\begin{figure}[H]
  \centering
\begin{tikzpicture}
  \matrix (m) [matrix of math nodes,
    nodes in empty cells,nodes={minimum width=5ex,
    minimum height=5ex,outer sep=-5pt},
    column sep=1ex,row sep=1ex]{
                &      &     &     &     & \\
          2     &   *  &  *  &  4  &  4  & \\
          1     &   *  &  2  &  2  &  2  & \\
          0     &   0  &  0  &  0  &  0  & \\
    \quad\strut &   0  &  1  &  2  &  3  & \strut \\};
\draw[-stealth] (m-2-4) -- (m-2-5);
\draw[-stealth] (m-3-3) -- (m-3-4);
\draw[-stealth] (m-3-4) -- (m-3-5);
\draw[-stealth] (m-4-2) -- (m-4-3);
\draw[-stealth] (m-4-3) -- (m-4-4);
\draw[-stealth] (m-4-4) -- (m-4-5);
\draw[thick] (m-1-1.east) -- (m-5-1.east) ;
\draw[thick] (m-5-1.north) -- (m-5-6.north) ;
\end{tikzpicture}.
\end{figure}
\noindent (The asterisk indicates the corresponding $E_1^{p,q}$ is trivial and the arrows indicate the differentials $d_1$ on the $E_1$-page.) Consequently, the differentials $d_r:E_r^{p,q}\rightarrow E_r^{p+r,q-r+1}$ are trivial when $r\geq 2$ (because the pure weights of the two sides are different), that is, $E_2^{p,q}=E_\infty^{p,q}$. Moreover, only $E_1^{p,p}$ can survive to $E_\infty$, so we have
\[H_c^{2p}(\overline{B})=E_2^{p,p}=\text{ker}\left(H^{2p}_c(T_p\setminus T_{p-1})\rightarrow H^{2p+1}_c(T_{p+1}\setminus T_p)\right).\]Dualizing, we see that the map $H^0(T_p\setminus T_{p-1})\rightarrow H^{2n-2p}(\overline{B})$, given by the Gysin pushforward, is surjective and that the sequence
\[H^1(T_{p+1}\setminus T_p)\rightarrow H^0(T_p\setminus T_{p-1})\rightarrow H^{2n-2p}(\overline{B})\rightarrow 0\] is exact. That is, the space of relations among the $p$-dimensional boundary strata in cohomology is given by the image of $H^1(T_{p+1}\setminus T_p)$ in $H^0(T_p\setminus T_{p-1})$. Note that we have $H^1(T_{p+1}\setminus T_p) = \bigoplus_{\Gamma\in LG^{p+1}}H^1(D_\Gamma^o)$, where $LG^{p+1}$ is the set of all level graphs $\Gamma$ such that $D_\Gamma$ is $(p+1)$-dimensional, so the direct sum of $H^1(\delta_\Gamma^o)\oplus H^1((\mathbb{C}^*)^{r(\Gamma)})\cong H^1(\delta_\Gamma^o\times (\mathbb{C}^*)^{r(\Gamma)})$, over all the level graphs $\Gamma\in LG^{p+1}$, surjects onto the space of relations (where $r(\Gamma)$ is the dimension of a general fiber of $D_\Gamma^o\rightarrow \delta_\Gamma^o$). As mentioned in \cite{petersen12}, the image of $H^1(\delta_\Gamma^o)$ constitutes the WDVV relations pulled back from $\overline{M}_{0,n}$. On the other hand, since a connected component of a fiber of $\overline{B}\rightarrow \overline{M}_{0,n}$ is finitely covered by a simplicial toric variety (see \cite{ccm22} for details), the image of $H^1((\mathbb{C}^*)^{r(\Gamma)})$ consititute the relations between the $p$-dimensional torus invariant subvarieties in the fiber of $D_\Gamma\rightarrow\delta_\Gamma$. Thus, we have proven the following:
\begin{proposition}
  The vector space of relations between the cohomology classes of $p$-dimensional boundary strata in $H^{2d-2p}(\overline{B})$ (where $d=n-3=\text{dim}\,\overline{B}$) is $\mathbb{Q}$-linearly generated by the WDVV relations pulled back from $\overline{M}_{0,n}$ and the relations between the $p$-dimensional torus invariant subvarieties in the toric fibers of $\overline{B}\rightarrow\overline{M}_{0,n}$.
\end{proposition}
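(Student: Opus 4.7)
The plan is to follow the mixed Hodge theoretic strategy of \cite{petersen12}, realizing the space of relations as the image of a differential in a spectral sequence whose collapse is forced by weight considerations. First I would filter $\overline{B}$ by the subvarieties $T_0\subset T_1\subset\ldots\subset T_{n-3}=\overline{B}$, where $T_p$ is the union of all boundary strata of dimension at most $p$ (with $\overline{B}$ itself counted as the unique top stratum). The corresponding long exact sequences in compactly supported cohomology assemble into the first-quadrant spectral sequence
\[E_1^{p,q}=H^{p+q}_c(T_p\setminus T_{p-1})=\bigoplus_{\Gamma\in LG^p}H^{p+q}_c(D_\Gamma^o)\Longrightarrow H^{p+q}_c(\overline{B}),\]
whose differentials respect the natural mixed Hodge structure.

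Next I would compute the weights on $E_1^{p,q}$. Using the finite surjective morphism $\delta_\Gamma^o\times(\mathbb{C}^*)^{r(\Gamma)}\to D_\Gamma^o$ constructed in the previous section, together with the fact that $\delta_\Gamma^o$ is (a finite image of) a complement of a hyperplane arrangement in $\mathbb{C}^{n-3}$, I would invoke the theorem of Shapiro (or the classical result that complements of hyperplane arrangements are of Hodge--Tate type with $H^i$ pure of weight $2i$) to conclude that $H^i(D_\Gamma^o)$ is pure of type $(i,i)$. Poincar\'e duality (respecting weights since $D_\Gamma^o$ has only finite quotient singularities) then gives that $H^i_c(D_\Gamma^o)$ is pure of weight $2i-2p$, so $E_1^{p,q}$ is pure of weight $2q$.

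With this purity, weight strictness of the differentials forces $d_r=0$ for all $r\geq 2$, so $E_2=E_\infty$. Since $\overline{B}$ is smooth and proper, $H^{2p}_c(\overline{B})=H^{2p}(\overline{B})$ is pure of weight $2p$, so only the diagonal terms $E_\infty^{p,p}$ contribute, and I obtain
\[H^{2p}_c(\overline{B})=\ker\bigl(d_1:E_1^{p,p}\to E_1^{p+1,p}\bigr).\]
Dualizing using Poincar\'e duality on each open stratum (and shifting to the dimension $p$ stratification of the dual formulation), this yields the exact sequence
\[\bigoplus_{\Gamma\in LG^{p+1}}H^1(D_\Gamma^o)\;\longrightarrow\;\bigoplus_{\Gamma\in LG^p}H^0(D_\Gamma^o)\;\longrightarrow\;H^{2d-2p}(\overline{B})\;\longrightarrow\;0,\]
where the rightmost map is the sum of Gysin pushforwards of the fundamental classes of the $p$-dimensional strata.

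Finally I would identify the image of the first map geometrically. Using the finite cover $\delta_\Gamma^o\times(\mathbb{C}^*)^{r(\Gamma)}\to D_\Gamma^o$, the K\"unneth formula gives
\[H^1(D_\Gamma^o)\;\hookrightarrow\;H^1(\delta_\Gamma^o)\oplus H^1\bigl((\mathbb{C}^*)^{r(\Gamma)}\bigr).\]
The image of the first summand under the Gysin pushforward consists precisely of the pullbacks of the WDVV relations from $\overline{M}_{0,n}$ (as observed in \cite{petersen12}, since $\delta_\Gamma^o$ is the corresponding open stratum in $\overline{M}_{0,n}$ and its $H^1$ is generated by WDVV-type classes). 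The image of the second summand records the monodromy of the toric fiber: since a connected component of a generic fiber of $\pi:\overline{B}\to\overline{M}_{0,n}$ is finitely covered by a simplicial toric variety (by \cite{ccm22}), $H^1$ of the torus encodes exactly the linear relations among the torus-invariant divisors, and therefore the Gysin pushforward sends it to the relations between the $p$-dimensional torus-invariant subvarieties of the fiber.

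The main obstacle I anticipate is the purity/weight bookkeeping: one must verify carefully that the finite morphism $\delta_\Gamma^o\times(\mathbb{C}^*)^{r(\Gamma)}\to D_\Gamma^o$ is a quotient by a finite group (so that it induces an injection on rational cohomology and transfers the Hodge--Tate purity), and that Poincar\'e duality on the orbifold $D_\Gamma^o$ is compatible with weights. Once these technical points are in place, the spectral sequence argument and the geometric identification of the two types of generators go through cleanly.
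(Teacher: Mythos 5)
Your proposal is correct and follows essentially the same route as the paper: the same filtration by dimension of boundary strata, the same compactly supported spectral sequence collapsing by weight purity (via the finite cover $\delta_\Gamma^o\times(\mathbb{C}^*)^{r(\Gamma)}\to D_\Gamma^o$ and the Hodge--Tate purity of hyperplane arrangement complements), and the same identification of the image of $H^1(\delta_\Gamma^o)$ with the pulled-back WDVV relations and of $H^1((\mathbb{C}^*)^{r(\Gamma)})$ with the toric relations in the fibers. The technical points you flag (injectivity of pullback under the finite cover and weight-compatible Poincar\'e duality on the orbifold strata) are handled implicitly in the paper exactly as you suggest, so no genuinely different ideas are involved.
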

\begin{example}
  Suppose $n=7$ and $\mu=(0^6,-2)$, then $\overline{B}=\mathbb{P}\Xi\overline{M}_{0,7}(\mu)$ is smooth (this is proven in the next section). Then consider the boundary stratum $D_\Gamma$ given by the level graph $\Gamma$:
  \begin{figure}[H]
    \centering
  \begin{tikzpicture}[main/.style = {draw, circle, fill=black}] 
  \node[main, minimum size=2mm,pin=90:$7$] (5) at (0,1) {};
  \node[main, minimum size=2mm] (1) at (0,0) {};
  \node[main, minimum size=2mm,pin=250:$5$,pin=290:$6$] (2) at (1.5,-1.5) {};
  \node[main, minimum size=2mm,pin=250:$1$,pin=290:$2$] (3) at (-1.5,-1.5) {};
  \node[main, minimum size=2mm,pin=250:$3$,pin=290:$4$] (4) at (0,-1.5) {};
  \draw (5) to (2);
  \draw (1) to (3);
  \draw (1) to (4);
  \draw (1) to (5);
\end{tikzpicture}
\end{figure}
\noindent (where the legs are labelled by their indices as a marked point $p_i$).Then the image of $D_\Gamma$ in $\overline{M}_{0,n}$ is a point, and $D_\Gamma$ isomorphic to a toric surface obtained by blowing up $\mathbb{P}^2$ at three points. In particular, $D_\Gamma$ has six torus-invariant $(-1)$-curves $C_1,\ldots,C_6$ (each of which is a boundary stratum corresponding to a degeneration of the bottom level of $\Gamma$), but its Picard rank is 4. So, there are two independent relations between $C_i$'s that cannot be written as a pullback of a WDVV relations in $\overline{M}_{0,7}$.
\end{example}
\section{Smooth coarse moduli spaces}\label{smooth_spaces}
In this section, we will analyze the cases where the coarse moduli space $\overline{B}$ of $\overline{\mathcal{B}}=\mathbb{P}\Xi\overline{M}_{0,n}(\mu)$ is smooth, where $\mu=(m_1,\ldots,m_n)$. Throughout this section, we will assume that $m_1\geq m_2\geq\ldots\geq m_n$. Recall from \cite[Sec. 2]{ccm22} that the isotropy group of a multiscale differential $(C,\omega)$ in $\overline{\mathcal{B}}$ is given by an extension of the group of curve automorphisms $\text{Aut}(C,\omega)$ by the group of ghost automorphisms $K_\Gamma=\text{Tw}_\Gamma/\text{Tw}^s_\Gamma$. Here, $\text{Tw}_\Gamma$ is the twist group, a discrete subgroup of $\mathbb{C}^L$ such that the quotient $\mathbb{C}^L/\text{Tw}_\Gamma$ is naturally identified with the level rotation torus. In particular, $\text{Tw}_\Gamma$ acts on the multiscale differentials compatible with $\Gamma$ by fixing the differentials on each level and bringing the prongs back to themselves, whereas the simple twist group $\text{Tw}^s_\Gamma$ is the subgroup of $\text{Tw}_\Gamma$ consisting of elements that can be written as a product of twists that act on one level passage only. Since $\text{Aut}(C,\omega)$ is trivial for a curve of genus 0, it follows that the singularities of $\overline{B}$, if any, are induced by the ghost automorphisms.

First, we introduce the following definition:
\begin{definition}\label{realizable_def}
  An enhanced level graph $\Gamma$ is said to be \textit{realizable (with in a given stratum of differentials)} if there is a multiscale differential $\omega$ compatible with $\Gamma$ (that is, $D_\Gamma\neq \emptyset$). When genus is 0, this entails with a collection of differentials $\{\omega_v\}_{v\in V(\Gamma)}$ (i.e. a multiscale differential) has prescribed orders of zeros and poles (including at the nodes), such that the two ends of a horizontal edge have simple poles with opposite residues and the level structure of $\Gamma$ is compatible -- for a vertical edge, the multiscale differential has non-negative order on the downward pointing half edge, and negative order on the upward pointing half edge, such that the sum of orders is $-2$.\\
  Otherwise, we call $\Gamma$ \textit{unrealizable}.
\end{definition}
The cherry graphs in \cref{cherry_figure} will play important role in our discussion below, so we introduce the following notations:

\textbf{Notation:} For brevity and convenience, we will denote the cherry graph \cref{regular_cherry} by the partition $\langle i_1,\ldots,i_k\,||\,i_{k+1},\ldots,i_\ell\,|\,i_{\ell+1},\ldots,i_n\rangle$ and the upside down cherry \cref{inverted_cherry} by the partition $\langle i_{k+1},\ldots,i_\ell\,|\,i_{\ell+1},\ldots,i_n\,||\,i_1,\ldots,i_k\rangle$ (double vertical lines $||$ are used to delineate the marked legs on the root vertex of the graph).
\begin{figure}[H]
   \centering
   \begin{subfigure}{.5\textwidth}
     \centering
  \begin{tikzpicture}[main/.style = {draw, circle, fill=black}] 
  \node[main, minimum size=2mm,pin=135:$i_1$,pin=45:$i_k$] (1) at (0,0) {};
  \node[main, minimum size=2mm,pin=250:$i_{\ell+1}$,pin=340:$i_n$] (2) at (1,-1) {};
  \node[main, minimum size=2mm,pin=200:$i_{k+1}$,pin=290:$i_\ell$] (3) at (-1,-1) {};
  \draw (1) -- node[right] {b} ++ (2);
  \draw (1) -- node[left] {a} ++ (3);
\end{tikzpicture}
\caption{A cherry}\label{regular_cherry}
\end{subfigure}%
\begin{subfigure}{.5\textwidth}
  \centering
  \begin{tikzpicture}[main/.style = {draw, circle, fill=black}]
  \node[main, minimum size=2mm,pin=225:$i_1$,pin=315:$i_k$] (1) at (0,0) {};
  \node[main, minimum size=2mm,pin=110:$i_{\ell+1}$,pin=20:$i_n$] (2) at (1,1) {};
  \node[main, minimum size=2mm,pin=160:$i_{k+1}$,pin=70:$i_\ell$] (3) at (-1,1) {};
  \draw (1) -- node[right] {b} ++ (2);
  \draw (1) -- node[left] {a} ++ (3);
\end{tikzpicture}
\caption{An upside down cherry}\label{inverted_cherry}
\end{subfigure}
\caption{Cherry graphs}\label{cherry_figure}
\end{figure}
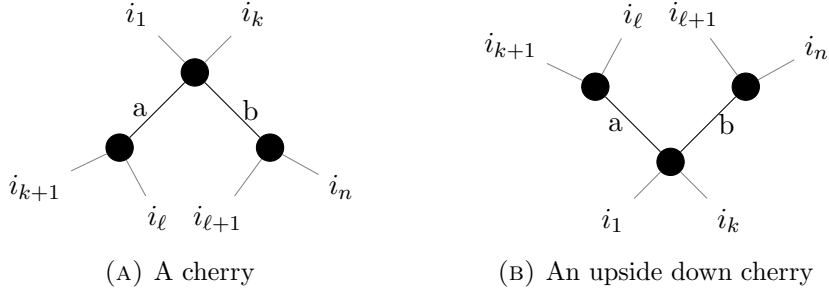

In either of the graphs in \cref{cherry_figure} above, we have the enhancements $a=|-1-m_{i_{k+1}}-\ldots-m_{i_\ell}|$ and $b=|-1-m_{i_{\ell+1}}-\ldots-m_{i_n}|$, so the enhancements are equal if and only if $m_{i_{k+1}}+\ldots+m_{i_\ell}=m_{i_{\ell+1}}+\ldots+m_{i_n}$. This leads to the following definition which will simplify our exposition:
\begin{definition}
  A cherry \cref{regular_cherry} or an upside down cherry \cref{inverted_cherry} will be called {\em balanced} if the enhancements on the two edges are equal. That is, for a cherry $\langle i_1,\ldots,i_k\,||\,i_{k+1},\ldots,i_\ell\,|\,i_{\ell+1},\ldots,i_n\rangle$ or an upside down cherry $\langle i_{k+1},\ldots,i_\ell\,|\,i_{\ell+1},\ldots,i_n\,||\,i_1,\ldots,i_k\rangle$, this means $m_{i_{k+1}}+\ldots+m_{i_\ell}=m_{i_{\ell+1}}+\ldots+m_{i_n}$.\\ Otherwise, we call the (possibly upside down) cherry {\em unbalanced}.
\end{definition}

The significance of the (un)balanced cherry will be elucidated by the computations in \cite[Example 5.7]{ccm22}, which we recall it here:
\begin{example}\cite[Example 5.7]{ccm22}\label{cherry_example}
  Suppose $\Gamma$ is a slanted cherry graph:
  \begin{figure}[H]
  \centering
  \begin{tikzpicture}[main/.style = {draw, circle, fill=black}] 
  \node[main] (1) at (0,0) {};
  \node[main] (2) at (-1,-1) {};
  \node[main] (3) at (1,-2) {};
  \draw (1) -- node[above left] {a} ++ (2);
  \draw (1) -- node[above right] {b} ++ (3);
\end{tikzpicture}.
\caption{A slanted cherry}\label{slanted_cherry}
\end{figure}
Assume that the enhancement on the shorter edge is $a$ and that on the longer edge is $b$. Then the index of $\text{Tw}_\Gamma^s$ in $\text{Tw}_\Gamma$ is given by $b/\text{gcd}(a,b)$ (which is thus also the order of the group of ghost automorphisms). In particular, if $a\neq b$ then the group of ghost automorphism is not trivial (if $b$ divides $a$, then we slant the cherry the other way to get an enhanced level graph with non-trivial group of ghost automorphisms). The situation for the upside down slanted cherry is analogous.
\end{example}

Thus by the preceding discussion and Example \ref{cherry_example}, we can conclude:
\begin{lemma}\label{equal_enhancements}
  If the set $\{1,\ldots,n\}$ admits a partition \[\langle i_1,\ldots,i_k\,|\,i_{k+1},\ldots,i_\ell\,|\,i_{\ell+1},\ldots,i_n\rangle\] such that the cherry $\langle i_1,\ldots,i_k\,||\,i_{k+1},\ldots,i_\ell\,|\,i_{\ell+1},\ldots,i_n\rangle$ or the upside down cherry $\langle i_{k+1},\ldots,i_\ell\,|\,i_{\ell+1},\ldots,i_n\,||\,i_1,\ldots,i_k\rangle$ is realizable and unbalanced (i.e. $m_{i_{k+1}}+\ldots+m_{i_\ell}\neq m_{i_{\ell+1}}+\ldots+m_{i_n}$), then the coarse moduli space $\overline{B}$ of $\mathbb{P}\Xi\overline{M}_{0,n}(\mu)$ is {\em not} a smooth variety.
\end{lemma}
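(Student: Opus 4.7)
The plan is to exhibit a boundary point in $\overline{B}$ at which the local structure is a non-smooth cyclic quotient singularity, produced by a non-trivial group of ghost automorphisms whose action on the transverse directions is not by pseudo-reflections.

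First, I construct an enhanced level graph $\Gamma$ realizing the hypothesized partition. Starting from a realizable and unbalanced (possibly upside-down) cherry as in \cref{cherry_figure}, with edge enhancements $a$ and $b$, I slant the cherry as in \cref{slanted_cherry}, arranging the slant so that the longer edge carries the \emph{larger} of the two enhancements; swapping the two sides of the cherry if necessary, I may assume $b>a$, so $b/\gcd(a,b)\geq 2$. The realizability hypothesis guarantees $D_\Gamma\neq\emptyset$, and by \cref{cherry_example} the group of ghost automorphisms $K_\Gamma=\mathrm{Tw}_\Gamma/\mathrm{Tw}_\Gamma^s$ is cyclic of order $b/\gcd(a,b)>1$. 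Because the curve automorphism group of a genus-zero pointed stable curve with the given marked points is trivial, the stack-isotropy at a generic point of $D_\Gamma^o$ is exactly this non-trivial cyclic group $K_\Gamma$.

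Next, I analyze the action of $K_\Gamma$ on a smooth local chart of $\overline{\mathcal{B}}$ through a generic point of $D_\Gamma^o$. The two vertical edges of the slanted cherry contribute two independent smoothing parameters, which serve as local coordinates normal to $D_\Gamma$; together with the coordinates along $D_\Gamma$ itself (on which $K_\Gamma$ acts trivially), they give a smooth local model. Using the explicit description of the twist group action on prong matchings from \cite{bcggm} (compare the computation in \cref{cherry_example} and in \cite[Sec.~2]{ccm22}), the generator of $K_\Gamma$ acts on these two smoothing parameters by multiplication by two non-trivial roots of unity of the same order $b/\gcd(a,b)$ (one read off from each edge enhancement). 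Hence the generator has no fixed hyperplane, and no non-trivial power of it does either.

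Finally, I conclude by Chevalley–Shephard–Todd: a finite group acting on a smooth variety produces a smooth quotient only if it is generated by pseudo-reflections. Since the generator of $K_\Gamma$ acts with non-trivial eigenvalues on \emph{both} normal directions, it is not a pseudo-reflection, and neither is any of its non-trivial powers; therefore the cyclic group $K_\Gamma$ contains no pseudo-reflections at all. The coarse quotient at the image of this point is thus a genuine cyclic quotient singularity, so $\overline{B}$ is not smooth there. The main obstacle I anticipate is the local calculation in the second step: one has to verify precisely that the ghost automorphism acts with non-trivial roots of unity on \emph{both} smoothing coordinates (rather than, say, trivially on one of them), and this requires carefully matching the prong-rotation description in \cite{bcggm, ccm22} with the plumbing coordinates at the two nodes.
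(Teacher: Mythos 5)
Your proposal is correct and follows essentially the same route as the paper: slant the unbalanced cherry so the ghost automorphism group $K_\Gamma$ of \cref{cherry_example} is a nontrivial cyclic group of order $b/\gcd(a,b)$, note that curve automorphisms are trivial in genus $0$, and conclude that the resulting quotient point of the coarse space is singular. The one step you flag as unverified --- that the generator acts by nontrivial roots of unity on \emph{both} transverse (level/smoothing) coordinates, so that no pseudo-reflections occur and Chevalley--Shephard--Todd applies --- is precisely the content the paper delegates to \cite[Example 5.7]{ccm22}, where the cyclic quotient singularity along the slanted cherry stratum is computed explicitly.
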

Thus, our strategy is to find a {\em realizable} and {\em unbalanced} (possibly upside down) cherry to eliminate the cases where the coarse moduli space cannot be smooth, and then verify that the few cases that are left are actually smooth.

Before we discuss the smooth cases, let us note the following:
  \begin{lemma}\label{realizable}
\begin{enumerate}
\item The cherry graph $\langle i_1,\ldots,i_k\,||\,i_{k+1},\ldots,i_\ell\,|\,i_{\ell+1},\ldots,i_n\rangle$ is realizable if and only if $m_{i_1}+\ldots+m_{i_k}\leq -2$ and $m_{i_{k+1}}+\ldots+m_{i_\ell}\geq 0$ and $m_{i_{\ell+1}}+\ldots+m_{i_n}\geq 0$.
\item The upside down cherry graph $\langle i_{k+1},\ldots,i_\ell\,|\,i_{\ell+1},\ldots,i_n\,||\,i_1,\ldots,i_k\rangle$ is realizable if and only if $m_{i_1}+\ldots+m_{i_k}\geq 2$ and $m_{i_{k+1}}+\ldots+m_{i_\ell}\leq -2$ and $m_{i_{\ell+1}}+\ldots+m_{i_n}\leq -2$.
\end{enumerate}
\end{lemma}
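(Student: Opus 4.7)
The plan is to use the observation (recorded in \cref{enhancements_on_tree}) that for a tree dual graph in genus~$0$ the enhancements on all edges are uniquely determined, combined with the requirement that the twisted differential on each $\mathbb{P}^1$-component have total degree $2g_v-2=-2$. Realizability then reduces to checking that the formally determined enhancements are positive integers.

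For part~(1), consider the cherry of \cref{regular_cherry}: each of the two lower vertices is a leaf of the tree, adjacent to exactly one vertical edge whose upward-pointing half-edge at the lower vertex carries a pole of order $\kappa+1$. The degree condition on the left-bottom vertex gives $\kappa_a = 1 + (m_{i_{k+1}} + \cdots + m_{i_\ell})$, and likewise $\kappa_b = 1 + (m_{i_{\ell+1}} + \cdots + m_{i_n})$ at the right-bottom vertex; the degree condition on the top vertex is then automatic, given $\sum_i m_i = -2$. Hence realizability---which requires $\kappa_a, \kappa_b \geq 1$---is equivalent to the two inequalities $m_{i_{k+1}} + \cdots + m_{i_\ell} \geq 0$ and $m_{i_{\ell+1}} + \cdots + m_{i_n} \geq 0$, from which the third inequality $m_{i_1} + \cdots + m_{i_k} \leq -2$ follows via the global sum. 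Conversely, when the three inequalities hold, one produces the required differentials on each $\mathbb{P}^1$-component by writing down a rational function with the prescribed zeros and poles as $f(z)\,dz$; in genus~$0$ the Global Residue Condition is vacuous, and the cherry has no horizontal edges, so no residue matching need be checked.

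Part~(2) is handled by a symmetric argument applied to the upside-down cherry of \cref{inverted_cherry}: now the top two vertices are the leaves, and the degree conditions on them yield $\kappa_a = -1 - (m_{i_{k+1}} + \cdots + m_{i_\ell})$ and $\kappa_b = -1 - (m_{i_{\ell+1}} + \cdots + m_{i_n})$. Positivity of these enhancements forces $m_{i_{k+1}} + \cdots + m_{i_\ell} \leq -2$ and $m_{i_{\ell+1}} + \cdots + m_{i_n} \leq -2$, and the global relation $\sum_i m_i = -2$ (equivalently, the degree condition on the root vertex) then gives $m_{i_1} + \cdots + m_{i_k} = \kappa_a + \kappa_b \geq 2$. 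The converse construction of differentials on each component proceeds exactly as before.

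The main potential pitfall is careful bookkeeping of the sign conventions for orders at the upward- versus downward-pointing half-edges of vertical edges, as recalled in the preliminaries; once those are tracked correctly, the proof is a short calculation. A minor subtlety worth flagging is that the three stated inequalities in each part are not independent: given the global constraint $\sum_i m_i = -2$, the two positivity conditions on the enhancements automatically force the inequality on the remaining vertex sum.
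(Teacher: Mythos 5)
Your proof is correct and follows essentially the same route as the paper: the necessity of the inequalities comes from the degree condition $2g_v-2=-2$ on each $\mathbb{P}^1$-component together with the sign conventions at the upward- and downward-pointing half-edges of a vertical edge, and your formulas $\kappa_a=1+(m_{i_{k+1}}+\cdots+m_{i_\ell})$, etc., agree with the enhancements recorded in the paper. The only difference is that you spell out the converse direction (constructing $f(z)\,dz$ on each component, using that the GRC is vacuous in genus $0$ and that cherries have no horizontal edges), which the paper leaves implicit — a welcome addition rather than a divergence.
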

\begin{proof}
  We will outline the proof for the cherry graph \cref{regular_cherry} (i.e. part (1)); the other case is analogous. For each edge, a multiscale differential has non-negative order on the downward pointing half edge (attached to the top vertex). Since the sum of orders of zeros and poles should sum to $-2$, this implies the sum of legs on the top vertex can be at most $-2$, i.e. $m_{i_1}+\ldots+m_{i_k}\leq -2$. On the other hand, for each edge (say, the left one), a multiscale differential has order at most $-2$ on the the upward pointing half edge (attached to the bottom left vertex). Since the sum of orders of zeros and poles should again be $-2$ for the bottom left vertex, we conclude $m_{i_{k+1}}+\ldots+m_{i_\ell}\geq 0$. Analogously, for the bottom right vertex, we obtain $m_{i_{\ell+1}}+\ldots+m_{i_n}\geq 0$.
\end{proof}
This lemma will be used repeatedly in our discussion throughout the rest of this section.

Now, we move onto determining all the smooth cases.
\subsection{Smoothness when $n\geq 7$}
Our objective is to prove that when $n\geq 7$, the only cases when the coarse moduli spaces are smooth are $\mu=(0^n,-2)$ and $(0^n,-1^2)$. First we prove that these are the only cases that could potentially be smooth:
\begin{enumerate}
\item \textbf{At least four of $\bm{m_i}$'s are non-negative.} We suppose $m_1,\ldots,m_4$ are non-negative. Consider a cherry graph $\Gamma_1 = \langle 5,6,\ldots,n\,||\,1,2\,|\,3,4\rangle$, which is realizable by \cref{realizable} since $m_5+\ldots+m_n=-2-m_1-\ldots-m_4\leq -2$. Since $m_1\geq m_2\geq m_3\geq m_4$, any such graph is balanced only if $m_1=m_2=m_3=m_4$. In fact, swapping one of these marked points with any other with non-negative order, we conclude that all the non-negative $m_i$'s are equal, that is, $m_1=m_2=\ldots=m_k\eqqcolon m$, where $m_k\geq 0>m_{k+1}$. If $k\geq 5$ then we consider a new cherry $\Gamma_1'=\langle 6,\ldots,n\,||\,1,2,5\,|\,3,4\rangle$; if it is still balanced then we must have $m_5=m=0$, that is, all non-negative entries are 0. Thus, for degree reason, we must have $\mu=(0^{n-1},-2)$ or $(0^{n-2},-1^2)$.

  Now, we assume $k=4$ instead, that is, $m=m_4\geq 0>m_5$. We still consider the graph $\Gamma_1'$ which is balanced only if $2m=2m+m_5$, which cannot happen (since $m_5<0$). So, by \cref{equal_enhancements}, $\Gamma_1'$ is unrealizable, which can only happen if $2m+m_5$ is negative. In this case, we consider an upside down cherry $\Gamma_2=\langle 5,6\,|\,4,7,8,\ldots,n\,||\,1,2,3\rangle$. So either \textit{(i)} the upside down cherry unrealizable, or \textit{(ii)} it is balanced. Since $m+m_7+\ldots+m_n\leq 2m+m_5\leq -1$, $\Gamma_2$ is unrealizable only when $n=7,\ m_5=m_7$ and $m=0$. But since $m_5,m_6,m_7$ are all negative, this cannot happen for degree reason (the sum of $m_i$'s would be less than $-2$). If $\Gamma_2$ is balanced then $m_5+m_6=m+m_7+\ldots+m_n$. In this case, consider a new upside down cherry $\Gamma_2'=\langle 5,6\,|\,3,4,7,8,\ldots,n\,||\,1,2\rangle$. If it is balanced (which happens only when $2m+m_7+\ldots+m_n=m_5+m_6$), then $m=0$ and $m_5$ through $m_n$ are negative, which cannot happen for degree reasons. So, $\Gamma_2'$ must be unrealizable, which, by lemma \cref{realizable}, can occur only when $2m+m_7+\ldots+m_n\geq -1$. But $2m+m_7\leq 2m+m_5\leq -1$, so this can happen only when $n=7$ and $2m+m_7=2m+m_5=-1$, that is, $m_5=m_6=m_7\eqqcolon\alpha$. But then $m_1+\ldots+m_7=4m+3\alpha =-2+\alpha<-2$, so cannot occur for degree reason.
  
Thus, when at least four $m_i$'s are non-negative, $\overline{B}$ could be smooth only in the cases $\mu=(0^{n-1},-2)$ and $(0^{n-2},-1^2)$.
\item \textbf{At most three $m_i$'s are non-negative.} That is, we assume $m_4<0$. We consider an upside down cherry graph $\Lambda_1=\langle n-3,n-2\,|\,n-1,n\,||\,1,2,\ldots,n-4\rangle$. Then the enhancements of the two edges are equal only when $m_n+m_{n-1}=m_{n-2}+m_{n-3}$, which can happen only when $m_n=m_{n-1}=m_{n-2}=m_{n-3}\eqcolon m$. In fact, replaceing one of the four marked points at the top level with any other with negative order $m_i$, we can conclude that all the negative $m_i$'s are equal. Furthermore, if $n>7$, we can consider the upside down chery given by $\langle 4,n-3,n-2\,|\,n-1,n\,||\,1,2,3,5,\ldots,n-4\rangle$, which is realizable and unbalanced, so we necessarily have $n=7$. So, we assume $n=7$. Then we instead look at $\Lambda_2=\langle 3,n-3,n-2\,|\,n-1,n\,||\,1,2,4,\ldots,n-4\rangle$. If $\Lambda_2$ is a realizable upside down cherry, it must be balanced, so $m_3=0$ (since $m_n+m_{n-1}=m_{n-2}+m_{n-3}=2m$). Replacing marked point 3 by 1 or 2, we necessarily have $m_1=m_2=m_3=0$, so the sum of $m_i$'s is less than -2, which cannot happen. So, $\Lambda_2$ is unrealizable, which implies $2m+m_3\geq -1$. In this case, $-2=4m+m_1+m_2+m_3\geq 4m+2m_3+m_1\geq -2+m_1\geq -2$, so each inequality is an equality, which implies $m_1=m_2=m_3=0$, which is again not possible for degree reason.

  Thus, if at most three $m_i$'s are non-negative and $n\geq 7$ then $\overline{B}$ cannot be smooth.
\end{enumerate}
From this discussion, we can see that the only cases for which $\overline{B}$ might be smooth are $\mu=(0^{n-1},-2)$ and $(0^{n-2},-1^2)$. We will prove that these two cases are indeed smooth:
\begin{proposition}\label{smooth_geq7}
  If $n\geq 7$ then the coarse moduli space $\overline{B}$ of $\mathbb{P}\Xi\overline{M}_{0,n}(\mu)$ is smooth if and only if $\mu=(0^{n-1},-2)$ or $(0^{n-2},-1^2)$.
\end{proposition}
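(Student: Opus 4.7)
The plan is to prove both directions, noting that the ``only if'' direction is already contained in the case analysis preceding the Proposition. That analysis exhibits, for every profile $\mu \neq (0^{n-1},-2), (0^{n-2},-1^2)$ with $n \geq 7$, a realizable and unbalanced (possibly upside-down) cherry, so by \cref{equal_enhancements} the coarse moduli space is not smooth. All that remains for that direction is a concluding paragraph collecting the subcases.

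The key reduction for the ``if'' direction is that in genus $0$, stability makes $\text{Aut}(C,\omega)$ trivial for every stable multiscale differential, so the only source of isotropy in $\overline{\mathcal{B}}$ is the ghost-automorphism group $K_\Gamma = \text{Tw}_\Gamma/\text{Tw}^s_\Gamma$ of the enhanced level graph $\Gamma$. If $K_\Gamma = 1$ at every realizable $\Gamma$, then the smooth DM stack $\overline{\mathcal{B}}$ is equivalent to its coarse moduli space $\overline{B}$, which therefore inherits smoothness. Moreover, when every vertical edge of $\Gamma$ has enhancement $\kappa_e = 1$, the congruence defining $\text{Tw}_\Gamma$ is vacuous and the simple twists already generate everything, giving $\text{Tw}_\Gamma = \text{Tw}^s_\Gamma$ and $K_\Gamma = 1$. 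Hence the ``if'' direction reduces to the structural claim: for $\mu = (0^{n-1},-2)$ or $(0^{n-2},-1^2)$, every realizable enhanced level graph has all vertical enhancements equal to $1$.

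To prove the structural claim, I will use the vertex order balance $\sigma_v - h_v + \sum_{\text{down}}(\kappa_e - 1) + \sum_{\text{up}}(-\kappa_e - 1) = -2$ together with the fact that the dual graph is a tree. The argument proceeds in two stages. First, I bound the horizontal subgraph level by level: at the top level, any vertex without a pole leg satisfies $h_v \geq 2$, so the leaves of the top-level horizontal subtree can only occur at the (at most two) vertices carrying pole legs, forcing this subtree to be either a single vertex or, in the $(0^{n-2},-1^2)$ case, a single path between the two pole-bearing vertices; at every lower level, any two horizontally connected vertices both possess upward edges to an ancestor subtree, which would create a cycle in the main tree, so no horizontal edges can occur below the top level. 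Second, the vertex balance applied to the restricted graphs forces $\sum_{\text{down}}(\kappa_e - 1) = 0$ at every vertex, hence all vertical enhancements equal $1$.

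The main obstacle is the $(0^{n-2},-1^2)$ case when the two simple poles lie on different top-level components: here a horizontal path can form at the top level, and I must verify that each intermediate vertex $u_i$ of this path, which has $h_{u_i} = 2$, still has $\sum_{\text{down}}(\kappa_e - 1) = 0$, and that the global tree constraint does not force $\kappa_e \geq 2$ on some edge below. The combinatorics is elementary once the tree property is used carefully, but it is the step that requires the most attention. Once the structural claim is settled, $K_\Gamma = 1$ for every realizable $\Gamma$ in the two exceptional strata, $\overline{\mathcal{B}}$ agrees with $\overline{B}$, and the smoothness follows.
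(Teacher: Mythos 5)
Your overall strategy coincides with the paper's: the ``only if'' direction is exactly the preceding cherry analysis via \cref{equal_enhancements}, and the ``if'' direction reduces smoothness to triviality of the ghost groups $K_\Gamma$, which follows once every realizable enhanced level graph has all enhancements equal to $1$. However, your proof of that structural claim has two genuine gaps. First, the topological argument excluding horizontal edges below the top level is wrong as stated. An interior vertex of a horizontal chain already carries two simple poles, so it need not possess an upward edge at all; and even for the two end vertices of such a chain, which do need upward edges, no cycle is forced once you are two or more levels below the top: the unique tree path between the two upper endpoints can pass through the horizontal chain itself (e.g.\ a path $a$--$v$--$w$--$b$ with $v$--$w$ horizontal and $a,b$ at higher, possibly different, levels is a perfectly good tree). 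The cycle argument only works one level below the top, where the top level is connected and all upward edges land in it. The actual reason such configurations are unrealizable is numerical, not topological: an end vertex of a lower-level horizontal chain has an upward pole of order $\leq -2$, a horizontal simple pole, and only zero-order legs, so its balance forces $\sum_{\mathrm{down}}(\kappa_e-1)\geq 1$, i.e.\ a downward edge with $\kappa\geq 2$; the lower endpoint of that edge then again needs a downward edge with $\kappa\geq 2$, and this descent cannot terminate at a leaf of the tree. Some such descent (or the paper's top-down recursion) is needed; your tree-cycle argument does not supply it.

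Second, your stage two asserts that ``the vertex balance applied to the restricted graphs forces $\sum_{\mathrm{down}}(\kappa_e-1)=0$ at every vertex.'' At a non-top vertex with a single upward edge of enhancement $\kappa$, the balance gives $\sum_{\mathrm{down}}(\kappa_e-1)=\kappa-1$, which is zero only if you already know $\kappa=1$; and you must also rule out vertices with two or more upward edges (an upside-down-cherry-shaped piece), for which the balance alone gives $\sum_{\mathrm{down}}(\kappa_e-1)\geq 2$ rather than a contradiction. Both points are handled by the paper's argument, which you should adopt: the top level is a single vertex carrying the $-2$ leg (resp.\ a connected horizontal chain with the two $-1$ legs at its ends), its balance forces $\kappa_e=1$ on all edges leaving the top level, and then one propagates downward --- each branch is again a rooted level tree whose root carries the unique half-edge of order $-2$ and otherwise only zero-order half-edges, so recursively all enhancements are $1$; the same descent also shows every non-top vertex lies strictly below its parent, excluding multiple upward edges and horizontal edges below the top. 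With that recursion in place of your two stages, the rest of your argument (trivial $\mathrm{Aut}(C,\omega)$ in genus $0$, $\mathrm{Tw}_\Gamma=\mathrm{Tw}_\Gamma^s$ when all $\kappa_e=1$, hence trivial isotropy and smooth coarse space) matches the paper and is correct.
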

\begin{proof}
  The ``only if'' part has already been proven. So, we will next prove that in these two cases, the coarse moduli space $\overline{B}$ is smooth. Note that in the case of $\mu=(0^{n-1},-2)$, all level graphs that can appear are rooted level trees, with the marked point of order $-2$ necessarily present in the root vertex (that is, the unique top level vertex). All other half edges appearing at the root vertex will have order 0; in particular, all edges going down from the root vertex will have enhancement equal to 1. Each such edge will lead to a branch of the tree, which will again be a rooted level tree with unique half-edge of order $-2$, and all other half edges of order 0. So, we can now proceed recursively to the lower levels to conclude that the enhancements on all the edges of the level graph have to be equal to 1. Consequently, the twist group and the simple twist group are trivially equal and thus the group of ghost automorphisms is trivial for every level graph.
  
  The case for $\mu=(0^{n-2},-1^2)$ is analogous -- the two marked points of negative order should necessarily be in the top level, but now the top level might have up to two vertices, connected by a horizontal edge. However, all the branches of the tree going down from the top level will still necessarily be rooted level trees with unique half-edge of order $-2$, so the same argument as above can be applied to conclude that the group of ghost automorphisms is trivial.
\end{proof}

\subsection{Smoothness when $n=6$}
When $n=5,6$, we lose the flexibility in the choice of the cherry graphs that we had before. Because of this, there are more cases where the coarse moduli space is smooth. In this subsection, we determine all the cases where the coarse moduli space $\overline{B}$ of $\mathbb{P}\Xi\overline{M}_{0,6}(\mu)$ is smooth.
\begin{enumerate}
\item \textbf{Exactly one $m_i$ is non-negative.} That is, $m_1\geq 0>m_2$. Then the upside down cherry graphs $\langle 3,4\,|\,5,6\,||\,1,2\rangle$ and $\langle 2,3,4\,|\,5,6\,||\,1\rangle$ are both realizable and at least one is unbalanced. (They are realizable by \cref{realizable} because $m_1\geq m_1+m_2=-2-m_3-\ldots-m_6\geq 2$.)
\item \textbf{Exactly one $m_i$ is negative.} That is, $m_5\geq 0> m_6$. Then the cherry $\langle 6\,||\,1,2,3\,|\,4,5\rangle$ is realizable ($m_6\leq -2$ since $\sum_im_i=-2$), so it must be balanced: $m_4+m_5=m_1+m_2+m_3\geq m_1+m_4+m_5$, so $m_1=0$. Since $m_1\geq\ldots\geq m_5\geq 0$, we can conclude that $m_1=\ldots=m_5=0$, so $\mu=(0^5,-2)$.
\item \textbf{Exactly two $m_i$'s are non-negative.} That is, $m_2\geq 0> m_3$. Then the upside down cherry graph $\Gamma_1=\langle 3,4\,|\,5,6\,||\,1,2\rangle$ is obviously realizable, so it must be balanced: $m_3+m_4=m_5+m_6$, which implies $m_3=\ldots=m_6\eqcolon m$. Next, consider a new upside down cherry $\Gamma_2=\langle 2,3,4\,|\,5,6\,||\,1\rangle$. Then either \textit{(i)} $\Gamma_2$ is unrealizable, or \textit{(ii)} it is balanced. The case \textit{(i)} can occur only when $m_1=1$ or $2m+m_2\geq -1$. In case of the former, $m_1=m_2=1$ and $m_3=\ldots=m_6=-1$ for degree reasons, so $\mu=(1^2,-1^4)$. In case of the latter, note that $2m+m_1\geq 2m+m_2\geq -1$, so $4m+m_1+m_2=-2$ forces $2m+m_1=2m+m_2=-1$ (in particular, $m_1=m_2$), which implies $\mu=(a^2,-b^4)$ for $a-2b=-1$ and $b\geq 1$.

  On the other hand, if the case \textit{(ii)} occurs then $m_2=0$. Then we look at a level graph $\Gamma_3=\langle 3,4,5\,|\,2,6\,||\,1\rangle$. So, the enhancements of the two edges are $m_6+m_2=m_6=m$ and $3m$, which are different. So, for smoothness of $\overline{B}$, the resulting upside down cherry cannot be realizable, so we must have $m_6+m_2=m=-1$, and thus $m_1=2$, that is, $\mu=(2,0,-1^4)$.
\item \textbf{Exactly two $m_i$'s are negative.} The argument is similar to part (3), but we swap the role of cherry graph and upside down cherry graph. We will obtain that either $\mu=(a^4,-b^2)$ for $2a-b=-1$ and $b\geq 1$ or $\mu=(0^4,-1^2)$.
\item \textbf{Exactly three $m_i$'s are non-negatve.} That is, $m_3\geq 0> m_4$. First we consider the upside down cherry graph $\Lambda_1=\langle 3,6\,|\,4,5\,||\,1,2\rangle$. Then for smoothness of $\overline{B}$, either \textit{(a)} $\Lambda_1$ is unrealizable, or \textit{(b)} $\Lambda_1$ is balanced. $\Lambda_1$ is unrealizable occurs when either $m_1+m_2\leq 1$ or $m_6+m_3\geq -1$ (see \cref{realizable}). In case of the former, we must have $m_1+m_2=1$ (for degree reason), so $m_1=1$ and $m_2=m_3=0$, which implies $\mu=(1,0^2,-1^3)$. So, let's assume $m_6+m_3\geq -1$. In fact, we can assume that $m_6+m_3=-1$ (otherwise we would have $-2=m_1+\ldots+m_6\geq 3(m_3+m_6)\geq 0$), and also that $m_4+m_1\geq 0$ (otherwise $\sum_im_i\leq 3(m_1+m_4)\leq -3$). Now, we consider the cherry graph $\Lambda_2=\langle 5,6\,||\,1,4\,|\,2,3\rangle$. Then the cherry is realizable (see \cref{realizable}), so it must be balanced, which implies $m_1+m_4=m_2+m_3$. If $m_1+m_4=0$ then we must have $m_2=m_3=0$, and thus $m_6=-1-m_3=-1=m_5=m_4$, and consequently, $\mu=(1,0^2,-1^3)$. On the other hand, if $m_1+m_4=m_2+m_3\geq 1$ then $m_2\geq 1$, so that $m_2+m_3+m_5\geq m_2+m_3+m_6\geq m_2-1\geq 0$. Thus, the new cherry level graph $\Lambda_2'=\langle 6\,||\,1,4\,|\,2,3,5\rangle$ is realizable and unbalanced (so the coarse space cannot be smooth).

  Next, we look at the case \textit{(b)}, that is, $\Lambda_1$ is balanced (i.e., $m_6+m_3=m_5+m_4\leq -2$). Then we consider the upside down cherry graph $\Lambda_1'=\langle 2,3,6\,|\,4,5\,||\,1\rangle$. Then either \textit{(i)} $\Lambda_1'$ is unrealizable, or \textit{(ii)} $\Lambda_1'$ is balanced. By \cref{realizable}, $\Lambda_1'$ is unrealizables when $m_1\leq 1$ (which implies $m_1=1$ for degree reason) or $m_2+m_3+m_6\geq -1$. In either case, we consider the cherry graph $\Lambda_2=\langle 5,6\,||\,1,4\,|\,2,3\rangle$. If $m_1=1$ and $m_2=0$ then $\mu=(1,0^2,-1^3)$. Similarly, if $m_1=m_2=1$ then $m_4+m_5+m_6=-2-m_1-m_2-m_3\geq -5$, so $m_4=-1$ (since $m_4\leq m_5\leq m_6$), so the two edges of $\Lambda_2$ will have different enhancements ($m_1+m_4=0\neq m_2+m_3$).\\
  So, now we consider $m_2+m_3+m_6\geq -1$. Then for $\Lambda_2$ to not induce singularity on $\overline{B}$, either $m_1+m_4\leq -1$ (unlike in case \textit{(a)}, $\Lambda_2$ can be unrealizable) or it is balanced (so $m_2+m_3=m_1+m_4$). In case of the former, $m_3+m_6\leq m_2+m_5\leq m_1+m_4\leq -1$, so $m_1+\ldots+m_6\leq -3$, which is impossible. So, we assume $m_2+m_3=m_1+m_4$. We have equalities: $m_6+m_3=m_5+m_4$ and $m_2+m_3=m_1+m_4$, as well as an inequaliy $m_2+m_3+m_6\geq -1$ (which implies $m_1+m_4+m_5=-2-(m_2+m_3+m_6)\leq -1$). Adding the two equalities, we obtain $m_3-1\leq m_6+m_2+2m_3=m_1+m_5+2m_4\leq m_4-1$, so that $m_3-m_4\leq 0$, which is a contradiction.

  Thus, if exactly three $m_i$'s are non-negative and $\overline{B}$ is smooth then $\mu=(1,0^2,-1^3)$.
\end{enumerate}
From this discussion, we see that the only cases for which $\overline{B}$ might be smooth are $\mu=(0^5,-2)$, $(2,0,-1^4)$, $(1,0^2,-1^3)$ and $(a^4,b^2)$ for $2a+b=-1$ (where $a,b$ are integers). In fact, as in \cref{smooth_geq7}, we can prove that each of these cases are smooth. For this, we will need to prove that $\overline{B}$ is not singular along other boundary strata of codimension $\geq 2$. To this end, we recall the description of $\text{Tw}_\Gamma$ and $\text{Tw}_\Gamma^s$ as given in \cite[Section 5]{ccm22} (especially, see the discussion right before example 5.7 there). Given an enhanced level graph $\Gamma$ with $L$ levels, define a lattice $M'\subset \mathbb{R}^L$ as:
\[M'=\left\langle\frac{1}{\ell_i}w_i:\,i=1,\ldots,L\right\rangle_{\mathbb{Z}}\subset\mathbb{R}^L,\]where $w_i$ is the $i^{\text{th}}$ unit vector in $\mathbb{R}^L$ and $\ell_i$ is the lcm of all enhancements of the edges crossing the $i^{\text{th}}$ level passage. Then we can realize the simple twist group as the dual of this lattice: $\text{Tw}_\Gamma^s=(M')^\vee$. On the other hand, using the notation $w^i_j=\sum_{k=i+1}^jw_k\in\mathbb{R}^L$, we can realize the twist group as $\text{Tw}_\Gamma=M^\vee$, such that
\[M=\left\langle\frac{1}{\kappa_e}w^{e^+}_{e^-},\ e\in E(\Gamma)\right\rangle_{\mathbb{Z}},\]
where $e^\pm$ are the upper and lower ends of the edge $e$ and $\kappa_e$ is the enhancement on $e$. Thus, to prove that $K_\Gamma$ is trivial, we need to prove that $M=M'$ for each enhanced level graph of codimension $\geq 2$. We will now prove
\begin{proposition}\label{smooth_eq6}
  The coarse moduli space $\overline{B}$ of $\mathbb{P}\Xi\overline{M}_{0,6}(\mu)$ is smooth exactly when $\mu=(0^5,-2)$, $(2,0,-1^4)$, $(1,0^2,-1^3)$ or $(a^4,b^2)$ for integers $a,b$ satisfying $2a+b=-1$.
\end{proposition}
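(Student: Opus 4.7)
The plan is to verify, for each of the four profiles isolated in the preceding case analysis, that the ghost-automorphism group $K_\Gamma = \mathrm{Tw}_\Gamma/\mathrm{Tw}^s_\Gamma$ vanishes for every realizable enhanced level graph $\Gamma$. Using the lattice description recalled immediately before the statement, this reduces to the equality $M = M'$ in $\mathbb{R}^L$ for each such $\Gamma$; since $M \subseteq M'$ is automatic (each edge $e$ crosses only passages $i$ with $\kappa_e \mid \ell_i$), the task is to produce each basis vector $\frac{1}{\ell_i}w_i$ of $M'$ as an integer combination of the edge generators $\frac{1}{\kappa_e}w^{e^+}_{e^-}$ of $M$.

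The profile $\mu = (0^5, -2)$ is handled by exactly the argument given in the proof of Proposition \ref{smooth_geq7}: the leg of order $-2$ sits at the root of every compatible level graph (which is forced to be a rooted level tree), and induction down the levels forces every $\kappa_e$ to equal $1$; in that situation $\ell_i = 1$ for all $i$ and a short combinatorial argument on the rooted tree writes each $w_i$ as an integer combination of the edge interval vectors.

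For the remaining three profiles the enhancements on some vertical edges can exceed $1$, so one must enumerate the realizable enhanced level graphs using the local balance equation $T_v + \sum_{e:\,v=e^+}(\kappa_e-1) + \sum_{e:\,v=e^-}(-\kappa_e-1) = -2$ at each vertex, together with stability. For $\mu = (1, 0^2, -1^3)$ and $\mu = (2, 0, -1^4)$ this yields a short finite list (codimension is at most $3$ since $n = 6$) with uniformly bounded enhancements, and $M = M'$ can be verified graph by graph: at every level passage the enhancements of the edges crossing it have a gcd/lcm relation forcing the required identity.

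The main obstacle is the infinite family $\mu = (a^4, b^2)$ with $2a+b = -1$, where enhancements grow linearly in $a$ and no finite enumeration suffices. The approach is to exploit the complete symmetry among the four zeros of order $a$ and between the two poles of order $b$: for any realizable $\Gamma$, the combinatorial shape of the underlying tree together with the distribution of the six legs determines each enhancement as an affine function of $a$ via the balance equations, and the relation $2a+b=-1$ forces a uniform gcd/lcm identity at each level passage that ensures $M=M'$. The delicate point is the uniform-in-$a$ organization of combinatorial types together with the verification of the required identity on each type; once this bookkeeping is done, each type reduces to a short arithmetic check.
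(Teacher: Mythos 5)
Your overall strategy (reduce smoothness to triviality of $K_\Gamma=\mathrm{Tw}_\Gamma/\mathrm{Tw}_\Gamma^s$, i.e.\ to the lattice equality $M=M'$, for every realizable graph of codimension $\geq 2$) is the same as the paper's, and your treatment of $\mu=(0^5,-2)$ is fine. But for the remaining profiles, and crucially for the infinite family $(a^4,b^2)$, the proposal stops exactly where the proof has to happen: you assert that ``the enhancements of the edges crossing each passage have a gcd/lcm relation forcing the required identity'' and that $2a+b=-1$ ``forces a uniform gcd/lcm identity at each level passage,'' and you yourself flag the uniform-in-$a$ bookkeeping as the delicate unfinished point. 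Moreover, a per-passage gcd/lcm condition is not the right criterion: triviality of $K_\Gamma$ depends on how single edges span \emph{several} passages, as the slanted cherry of \cref{cherry_example} already shows (there each passage taken alone looks harmless, yet $[\mathrm{Tw}_\Gamma:\mathrm{Tw}_\Gamma^s]=b/\gcd(a,b)$). So the argument as proposed neither proves the finite cases nor the infinite family.

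What is missing is the reduction that makes the check finite and uniform in $a$, which is how the paper proceeds. First, if a level passage is crossed by exactly one edge whose endpoints lie in consecutive levels, then that edge contributes $w_i/\ell_i$ to $M$ and no other edge has an $i$-th component, so $M=M'$ for $\Gamma$ follows from the same equality for the undegeneration collapsing that passage; hence one may assume no such passage exists. For $n=6$ (so codimension $\leq 3$) this leaves only two tree shapes (and their upside-down versions), each with three edges on four levels. Second, in those shapes all three enhancements must be equal: otherwise an undegeneration produces a realizable \emph{unbalanced} cherry or upside-down cherry, which the preceding case analysis has already excluded for the four candidate profiles (this is precisely what makes the $(a^4,b^2)$ family no harder than the others, with no case-by-case arithmetic in $a$). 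Third, with all enhancements equal to $\kappa$ one has $\ell_1=\ell_2=\ell_3=\kappa$, and the edge vectors $w_1/\kappa$, $(w_1+w_2)/\kappa$, $(w_2+w_3)/\kappa$ generate $w_i/\kappa$ for all $i$, so $M=M'$. Without steps of this kind (or a genuine substitute for them), your proposal has a gap at its core claim.
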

\begin{proof}
  We have already checked that the group $K_\Gamma$ is trivial for all slanted cherries and slanted upside down cherries appearing in the each of the four cases. Note that in each of the four cases, the exceptional divisors are given by $D_\Gamma$ where $\Gamma$ is either a cherry or an upside down cherry. Also, observe that if the $i^{\text{th}}$ level passage of a level graph $\Gamma$ has exactly one edge $e$ crossing it such that $e$ connects two vertices in successive levels then $w_{e^-}^{e^+}/\kappa_e=w_i/\ell_i$ and for any other edge $f$, the $i^{\text{th}}$ component of the vector $w_{f^-}^{f^+}$ is 0, so $M$ and $M'$ are equal for $\Gamma$ if they are equal for its undegeneration collapsing the $i^{\text{th}}$ level passage. Thus we can always assume that if the level graph $\Gamma$ has a level passage with exactly one edge then its ends are not in the successive levels, so we only need to check triviality of $K_\Gamma$ where $\Gamma$ is one of the following (or upside down versions of the following):
  \begin{figure}[H]
    \centering
    \begin{subfigure}{.5\textwidth}
     \centering
\begin{tikzpicture}[main/.style = {draw, circle, fill=black}] 
  \node[main] (1) at (0,0) {};
  \node[main] (2) at (1,-1) {};
  \node[main] (3) at (-1,-2) {};
  \node[main] (4) at (1,-3) {};
  \draw (1) to (2);
  \draw (1) to (3);
  \draw (2) to (4);
\end{tikzpicture},
\end{subfigure}%
\begin{subfigure}{.5\textwidth}
     \centering
\begin{tikzpicture}[main/.style = {draw, circle, fill=black}] 
  \node[main] (1) at (0,0) {};
  \node[main] (2) at (1,-1) {};
  \node[main] (3) at (-1,-3) {};
  \node[main] (4) at (1,-2) {};
  \draw (1) to (2);
  \draw (1) to (3);
  \draw (2) to (4);
\end{tikzpicture},
\end{subfigure}
\end{figure}
such that enhancements of the edges are equal (otherwise there will be an undegeneration yielding a cherry with different enhancements). We have suppressed the marked points for simplicity. Then we can check \'a la \cite[Example 5.7]{ccm22} that for each of these graphs, $M$ and $M'$ are equal.
\end{proof}
\subsection{Smoothness when $n=5$}
In order to state our proposition more concisely, we will allow $m_i$'s to not necessarily be in descending order. However, in our proof, we will return back to assuming $m_1\geq\ldots\geq m_5$.
\begin{proposition}\label{smooth_nequals5}
  The following are all (up to permutation of $m_i$) the cases for which the coarse moduli space $\overline{B}$ of $\mathbb{P}\Xi\overline{M}_{0,5}(\mu)$ is smooth ($a,b\in\mathbb{Z}$):
  \begin{enumerate}
  \item $\mu=(2a-1,a-1,-a^3)$ or $(a^2,0,-a-1^2)$. In these cases, $\overline{B}$ is isomorphic to $\overline{M}_{0,5}$.
\item $\mu=(4a-2,a-1,-a^2,-3a+1)$. These are all isomorphic to a blowup of $\overline{M}_{0,5}$ at one point.
\item $\mu=(3a-1,2a-1,-a,-2a^2)$ and $(2a-1^2,-a^2,-2a)$. Except when $a=0$ (for which  $\overline{B}\cong \overline{M}_{0,5}$), these are isomorphic to a blowup of $\overline{M}_{0,5}$ at two points.
\item $\mu=(4a-2,-a^4)$. These are isomorphic to a blowup of $\overline{M}_{0,5}$ at three points.
\item $\mu=(4a-2^2,-a^2,-6a+2)$. These are isomorphic to a blowup of $\overline{M}_{0,5}$ at four points.
\item $\mu=(a^2,b^3)$ with $2a+3b=-2$. These are isomorphic to a blowup of $\overline{M}_{0,5}$ at six points.
  \end{enumerate}
\end{proposition}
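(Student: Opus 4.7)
The plan is to mirror the strategy used for $n=6$ and $n\geq 7$: assume $m_1\geq\ldots\geq m_5$, split into cases according to the number of non-negative $m_i$, and in each case exhibit realizable unbalanced cherries (via \cref{realizable} and \cref{equal_enhancements}) that force singularity of $\overline{B}$. The complication here is that stability in $n=5$ forces every cherry/upside-down cherry to have a fixed shape (one leg on the singleton vertex and two legs on each of the other two vertices of the cherry, up to vertex flipping), so there are far fewer cherries available to rule out configurations. This is precisely why more smooth cases survive than in $n\geq 6$. For each configuration (five non-negative; four non-negative; three non-negative; etc.), requiring every realizable cherry/upside-down cherry to be balanced imposes a system of equalities $m_{i_{k+1}}+\ldots+m_{i_\ell}=m_{i_{\ell+1}}+\ldots+m_{i_n}$; combined with $\sum m_i=-2$, these force $\mu$ into one of the six listed one-parameter families.

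To verify smoothness in each of the listed cases, I would proceed exactly as in the proof of \cref{smooth_eq6}. For each enhanced level graph $\Gamma$, we must check that the ghost automorphism group $K_\Gamma=\mathrm{Tw}_\Gamma/\mathrm{Tw}_\Gamma^s$ is trivial. In codimension one (cherries and upside-down cherries that are divisors of $\overline{B}$), the two edges have equal enhancements in each of the listed cases, so \cref{cherry_example} gives $K_\Gamma=1$. In codimension two, we use the lattice description $\mathrm{Tw}_\Gamma=M^\vee$ and $\mathrm{Tw}_\Gamma^s=(M')^\vee$ recalled in the proof of \cref{smooth_eq6}: whenever a level passage is crossed by a single edge between consecutive levels, the corresponding generators of $M$ and $M'$ coincide, so we may reduce to an undegeneration, and the only remaining graphs to check are the two three-level trees drawn in \cref{smooth_eq6}, whose lattices $M$ and $M'$ agree for the same reason as before.

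Finally, to identify each smooth $\overline{B}$ with a blowup of $\overline{M}_{0,5}$, I would count the divisors of $\overline{B}$ that are exceptional over $\overline{M}_{0,5}$. By the Picard-rank argument in the introduction, this count equals $\operatorname{rk}\operatorname{Pic}(\overline{B})-\operatorname{rk}\operatorname{Pic}(\overline{M}_{0,5})$. An exceptional divisor corresponds to a divisorial level graph $\Gamma$ (necessarily a cherry or upside-down cherry in the $n=5$ setting) whose image $\delta_\Gamma$ in $\overline{M}_{0,5}$ has codimension at least two, i.e., whose underlying dual graph is a stable tree with three vertices. Enumerating these cherries in each of the six families (using \cref{realizable} to exclude unrealizable ones) gives the blowup counts $0,1,2,3,4,6$. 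That $\overline{B}\to\overline{M}_{0,5}$ is actually a sequence of blowups at smooth (reduced, irreducible) points, rather than merely a birational morphism with the right Picard rank, follows from the same idea outlined in the introduction for $\mu=(0^{n-1},-2)$: factor $\pi$ as a sequence of blowups along the images $\delta_\Gamma$ of the exceptional divisors, in order of increasing image dimension.

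The main obstacle is the sheer case analysis in the first paragraph, which is combinatorial and must be pushed through carefully without missing any realizable-unbalanced cherry; the second obstacle is the blowup identification, which requires verifying that in each family the fiber over each image point $\delta_\Gamma$ is irreducible (this follows from \cref{irreducible_boundary}) and isomorphic to $\mathbb{P}^1$ (since the dimensions match), and that distinct exceptional divisors have disjoint images.
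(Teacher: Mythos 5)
Your proposal follows essentially the same route as the paper's own proof: necessity of the six families via realizable unbalanced (upside-down) cherries using \cref{realizable} and \cref{equal_enhancements}, sufficiency via triviality of the ghost automorphism groups checked with the lattice description, and the blowup identification by counting the exceptional cherry divisors — the paper merely organizes the case analysis by the number of exceptional divisors rather than by sign pattern, which yields the classification and the blowup descriptions simultaneously. One small correction to your verification step: for $n=5$ the codimension-two graphs remaining after the single-edge reduction are the slanted cherries and three-vertex chains (the four-vertex trees drawn in \cref{smooth_eq6} cannot occur since dual graphs have at most three vertices), and it is precisely for the slanted cherries that \cref{cherry_example} together with balancedness gives $K_\Gamma=1$, not for the two-level cherries where it is automatic.
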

Since the dual graphs on $\overline{M}_{0,5}$ can have at most three vertices, the exceptional divisors of $\overline{B}$ over $\overline{M}_{0,5}$ are all given by enhanced level graphs with two levels such that at least one of the levels must have two disjoint vertices (so they should be either a cherry or an upside down cherry). So to prove each of these cases is smooth, we simply write down the possible cherries/upside down cherries for all these cases and check that they are balanced.

The following will be convenient in the proof of \cref{smooth_nequals5}:
  \begin{claim}\label{all_balanced_cherries}
    If $m_1\geq\ldots\geq m_5$ with $m_2\geq 0$ and $m_4<0$ then:
    \begin{enumerate}[(a)]
    \item all possibilities for realizable and  balanced cherries are $\langle 5\,||\,1,4\,|\,2,3\rangle$, $\langle 5\,||\,1,3\,|\,2,4\rangle$, $\langle 4\,||\,1,5\,|\,2,3\rangle$, $\langle 4\,||\,1,3\,|\,2,5\rangle$, $\langle 3\,||\,1,5\,|\,2,4\rangle$ and $\langle 3\,||\,1,4\,|\,2,5\rangle$.
    \item all possibilities for realizable and  balanced upside down cherries are $\langle 2,5\,|\,3,4\,||\,1\rangle$, $\langle 2,4\,|\,3,5\,||\,1\rangle$, $\langle 1,5\,|\,3,4\,||\,2\rangle$, $\langle 1,4\,|\,3,5\,||\,2\rangle$, $\langle 1,5\,|\,2,4\,||\,3\rangle$ and $\langle 1,4\,|\,2,5\,||\,3\rangle$.
    \end{enumerate}
  \end{claim}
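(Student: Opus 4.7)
The plan is to enumerate, for $n = 5$, all combinatorial types of cherries (part (a)) and upside down cherries (part (b)), and then eliminate those inconsistent with realizability or balance by combining \cref{realizable} with the hypotheses $m_1 \geq \cdots \geq m_5$, $m_2 \geq 0$, $m_4 < 0$.

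For part (a), stability of the dual graph forces one leg on the top vertex and two legs on each bottom vertex, so a cherry is specified by a choice of top leg $a \in \{1, \ldots, 5\}$ together with an unordered partition of the other four legs into two pairs, giving $5 \cdot 3 = 15$ types. By \cref{realizable}(1) we need $m_a \leq -2$; since $m_1, m_2 \geq 0$ this restricts $a$ to $\{3, 4, 5\}$. With $a$ fixed, write the remaining legs as $\{1, 2\} \sqcup \{x, y\}$ with $\{x, y\} = \{3, 4, 5\} \setminus \{a\}$; there are three possible bottom partitions. I would then eliminate the partition $\{1, 2\}\,|\,\{x, y\}$ by noting that $\{x, y\}$ always contains an index $\geq 4$ whose $m$-value is negative, so monotonicity yields $m_x + m_y \leq m_2 + m_4 < m_1 + m_2$ strictly and the two pair sums can never agree. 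The other two partitions, over the three choices of $a$, produce exactly $3 \cdot 2 = 6$ cherries, which I would then match against the six listed.

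Part (b) is handled dually. Stability again fixes the shape to one leg on the root (bottom) vertex and two on each of the two top vertices. \Cref{realizable}(2) gives $m_a \geq 2$, forcing $a \in \{1, 2, 3\}$ because $m_4, m_5 < 0$. The four top legs split as $\{4, 5\} \sqcup \{p, q\}$ with $\{p, q\} = \{1, 2, 3\} \setminus \{a\}$, and I would rule out the partition that keeps $\{4, 5\}$ together: when $a = 3$ we would need $m_p + m_q = m_1 + m_2 \leq -2$, contradicting $m_1, m_2 \geq 0$; when $a \in \{1, 2\}$ the pair $\{p, q\}$ still contains some index in $\{1, 2\}$ with $m \geq 0$, so $m_p + m_q \geq m_4 > m_4 + m_5$ strictly (using $m_5 < 0$), breaking balance. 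The two remaining partitions per choice of $a$ yield the six listed upside down cherries.

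The only substantive input beyond a straightforward finite case analysis is the strict monotonicity argument that rules out the ``bad partition'' in each part, and this follows cleanly from $m_2 \geq 0 > m_4$ together with the global ordering. I do not anticipate any real obstacle beyond organizing the bookkeeping of which partition corresponds to which element of the list.
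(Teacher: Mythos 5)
Your proposal is correct and is essentially the paper's argument: the paper also disposes of this claim by a finite case check, noting that every cherry or upside down cherry not on the list is either unrealizable (via \cref{realizable}) or unbalanced by a monotonicity comparison such as $m_1+m_2\geq m_3>m_3+m_4$. Your write-up just organizes the same bookkeeping more explicitly (restricting the singleton leg by realizability, then killing the partition that keeps $\{1,2\}$ resp.\ $\{4,5\}$ together), and the inequalities you use are valid under the stated hypotheses.
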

  \begin{proof}
    Any other possibility can be checked easily to be either unrealizable or unbalanced. For instance, the cherry $\langle 5\,||\,1,2\,|\,4,3\rangle$ is unbalanced because $m_1+m_2\geq m_3>m_3+m_4$. Other possibilities can be excluded analogously.
  \end{proof}
  \begin{remark}\label{realizable_implication}
    Let us note that in part (a) (resp. part (b)) of \cref{all_balanced_cherries}, if any of the last three cherries (resp. upside down cherries) is realizable then the first three are all realzable. This can be checked easily as a consequence of \cref{realizable}.
    \end{remark}
\begin{proof}[Proof of \cref{smooth_nequals5}]
  Note that since enhancements on a dual graph is automatically determined (\cref{enhancements_on_tree}), there cannot be more than one enhanced level graph of codimension 1 over a dual graph with two edges. So, $\overline{B}$ is a blowup of $\overline{M}_{0,5}$ at finitely many (reduced) points. Our strategy will then be to analyze all the cases for which $\overline{B}$ is a blowup of $i$ points in $\overline{M}_{0,5}$.

  Let us observe that if $\mu=(m_1,\ldots,m_5)$ with exactly four entries non-negative (resp. negative) then we always obtain a realizable cherry graph (resp. upside down cherry graph) where the marked legs of non-negative (resp. negative) orders are distributed on the two leaves. Imposing that they are balanced and permuting the marked legs on the leaves, we see that the non-negative (resp. negative) $m_i$'s have to be equal, so $\mu$ has to be $(a^4,-4a-2)$ (resp. $(4a-2, -a^4)$). So, for the rest of the argument, we will assume that at least two entries of $\mu$ are non-negative and at least two are negative. Additionally, as mentioned in the beginning of the section, we will assume $m_1\geq m_2\geq\ldots\geq m_5$ throughout the proof. So, by inspection of the inequalities in \cref{realizable}, if a cherry graph is realizable on $\overline{B}$ then one of realizable cherries must be $\Gamma_1$ (\cref{gamma1}); similarly, if an upside down cherry graph is realizable on $\overline{B}$ then one of the realizable upside down cherries must be $\Gamma_2$ (\cref{gamma2}).
  \begin{enumerate}
  \item \textbf{$\overline{B}$ is isomorphic to $\overline{M}_{0,5}$.} This means none of the stable cherry or upside down cherry graphs will be realizable. For instance, if we take a cherry graph $\Gamma_1=\langle 5\,||\,1,4\,|\,2,3\rangle$ (\cref{gamma1}) then (by \cref{realizable}) either $m_5=-1$ or $m_1+m_4\leq -1$ or $m_2+m_3\leq -1$. Similarly, for an upside down cherry $\Gamma_2=\langle 2,5\,|\,3,4\,||\,1\rangle$ (\cref{gamma2}), either $m_1\in \{0,1\}$ or $m_2+m_5\geq -1$ or $m_3+m_4\geq -1$. Of course $m_i$'s are additionally subject to the degree constraint: $\sum_im_i=-2$.
\begin{figure}[H]
   \centering
   \begin{subfigure}{.5\textwidth}
     \centering
  \begin{tikzpicture}[main/.style = {draw, circle, fill=black}] 
  \node[main, minimum size=2mm,pin=90:$5$] (1) at (0,0) {};
  \node[main, minimum size=2mm,pin=225:$1$,pin=315:$4$] (2) at (-1,-1) {};
  \node[main, minimum size=2mm,pin=225:$2$,pin=315:$3$] (3) at (1,-1) {};
  \draw (1) to (2);
  \draw (1) to (3);
\end{tikzpicture}
\caption{$\Gamma_1$}\label{gamma1}
\end{subfigure}%
\begin{subfigure}{.5\textwidth}
  \centering
  \begin{tikzpicture}[main/.style = {draw, circle, fill=black}]
  \node[main, minimum size=2mm,pin=270:$1$] (1) at (0,0) {};
  \node[main, minimum size=2mm,pin=135:$3$,pin=45:$4$] (2) at (1,1) {};
  \node[main, minimum size=2mm,pin=135:$2$,pin=45:$5$] (3) at (-1,1) {};
  \draw (1) to (2);
  \draw (1) to (3);
\end{tikzpicture}
\caption{$\Gamma_2$}\label{gamma2}
\end{subfigure}
\caption{Possible exceptional divisors in $\overline{B}$}
\label{exceptionals}
\end{figure}
The cases $m_5=-1$ and $m_1\in \{0,1\}$ are easily dealt with by enumerating all possible 5-tuples $\mu$ (noting that $m_1\geq\ldots\geq m_5$), and checking which one of them give smooth coarse moduli space $\overline{B}$ -- the smooth ones are $(1^3, -2, -3),\ (1^2,0,-2^2),\ (1,0,-1^3)$, and $(0^3,-1^2)$, which are all special cases of tuples listed in part (1) of the proposition.

So, we are left with four cases from above: $m_1+m_4\leq -1$ and $m_2+m_5\geq -1$, or $m_1+m_4\leq -1$ and $m_3+m_4\geq -1$, or $m_2+m_3\leq -1$ and $m_2+m_5\geq -1$, or $m_2+m_3\leq -1$ and $m_3+m_4\geq -1$. First, assume $m_1+m_4\leq -1$ and $m_2+m_5\geq -1$. Since $-1\geq m_1+m_4\geq m_1+m_5\geq m_2+m_5\geq -1$, we see that $m_1+m_4=m_1+m_5=m_2+m_5=-1$ (and thus $m_1=m_2$ and $m_4=m_5$). Additionally, $\sum_im_i=-2$ implies $m_3=0$, so $\mu=(a^2,0,-a-1^2)$.

The other cases can also be dealt with analogously -- for example, $m_1+m_4\leq -1$ and $m_3+m_4\geq -1$ implies $\mu=(a^3,-a-1,-2a-1)$ (which, after permutation and change of variable $a\rightarrow -a$, is the first tuple in part (1) of the proposition); $m_2+m_3\leq -1$ and $m_2+m_5\geq -1$ implies $\mu=(2a-1,a-1,a^3)$; and $m_2+m_3\leq -1$ and $m_3+m_4\geq -1$ implies $\mu=(a^2,0,-a-1^2)$.
\item \textbf{$\overline{B}$ is isomorphic to a blowup of $\overline{M}_{0,5}$ at a point.} The unique exceptional divisor is either represented by a balanced cherry or by a balanced upside down cherry. Suppose the upside down cherry $\Gamma_2=\langle 2,5\,|\,3,4\,||\,1\rangle$ (\cref{gamma2} from before) represents the exceptional divisor. Since it is balanced, we get $m_2+m_5=m_3+m_4\leq -2$ and $m_1\geq 2$. On the other hand, all other cherry and upside down cherry graphs must then be unrealizable. In particular, unrealizability of the cherry graph $\Gamma_1=\langle 5\,||\,1,4\,|\,2,3\rangle$ (\cref{gamma1}) implies either $m_1+m_4\leq -1$ or $m_2+m_3\leq -1$ (because $m_2+m_5\leq -2$, we have $m_5\leq -2$). On the other hand, $\Gamma_2'=\langle 2,4\,|\,3,5\,||\,1\rangle$ analogously yields another unrealizable upside down cherry graph, so we get $m_2+m_4\geq -1$.

  If $m_1+m_4\leq -1$ and $m_2+m_4\geq -1$ then $-1\leq m_2+m_4\leq m_1+m_4\leq -1$ implies $m_1=m_2$, which is not possible (otherwise swapping marked points 1 and 2 in $\Gamma_2$ would give another exceptional divisor on $\overline{B}$). So, we assume $m_2+m_3\leq -1$ and $m_2+m_4\geq -1$. Then $-1\leq m_2+m_4\leq m_2+m_3\leq -1$ implies $m_3=m_4$ (say equal to $-a$) and $m_2+m_3=-1$. So $m_2=a-1$ and $m_2+m_5=m_3+m_4$ implies $m_5=-3a+1$. And $\sum_im_i=-2$ implies $m_1=4a-2$, so that $\mu=(4a-2,a-1,-a^2,-3a+1)$.

  The argument for the case where exceptional divisor is given by regular cherry $\Gamma_1$ (\cref{gamma1}) is analogous, and implies $\mu=(3a+1,a^2,-a-1,-4a-2)$ (which, after a permutation and change of variable $a\rightarrow -a$, is the tuple in the part (2) of the proposition).
\item \textbf{$\overline{B}$ is isomorphic to a blowup of $\overline{M}_{0,5}$ at two points.} First we note that the two exceptional divisors should either both be given by regular cherry graphs or both by upside down cherry graphs. Indeed, if both $\Gamma_1=\langle 5\,||\,1,4\,|\,2,3\rangle$ and $\Gamma_2=\langle 2,5\,|\,3,4\,||\,1\rangle$ are realizable then $m_1+m_4=m_2+m_3$ an $m_2+m_5=m_3+m_4$. Then $\sum_im_i=-2$ implies $3m_3+m_2+m_4=-2$. On the other hand, swapping marked points 3 and 4 in $\Gamma_1$ (resp. swapping marked points 4 and 5 in $\Gamma_2$) must then yield an unrealizable level graph, so $m_2+m_4\leq -1$ (resp. $m_2+m_4\geq -1$). That is, $m_2+m_4=-1$, so putting this in $3m_3+m_2+m_4=-2$ implies $m_3=-1/3$, which is impossible.

  So, let us assume both exceptional divisors are represented by two cherry graphs $\Gamma_1$ and $\Gamma_1'$, where $\Gamma_1'$ is either $\langle 4\,||\,1,5\,|\,2,3\rangle$ or $\langle 5\,||\,2,4\,|\,1,3\rangle$. In case of the former, the latter is unrealizable, so that $m_2+m_4\leq -1$. And since both $\Gamma_1$ and $\Gamma_1'$ are balanced, we get $m_1+m_4=m_2+m_3\geq 0$ and $m_1+m_5=m_2+m_3$ (so that $m_4=m_5$). Additionally, since the graph $\Gamma_2$ is also unrealizable, either $m_2+m_5\geq -1$ or $m_3+m_4\geq -1$ or $m_1\leq 1$. If $m_2+m_5\geq -1$ then $-1\leq m_2+m_5=m_2+m_4\leq -1$ implies $m_2+m_4=-1$. So, putting $m_4=m_5=-m_2-1$ and $m_1=m_2+m_3-m_4$ in the equation $\sum_im_i=-2$ gives $m_2=-2m_3-1$. So, setting $m_3=-a$ then gives $\mu = (3a-1,2a-1,-a,-2a^2)$.

  The argument for the case where $\Gamma_1'=\langle 5\,||\,2,4\,|\,1,3\rangle$ is similar and yields $\mu=(2a-1^2,-a^2,-2a)$.

  Similarly, the argument for the case where both exceptional divisors are given by upside down cherry graphs is analogous, and yields $\mu=(2a^2,a,-2a-1,-3a-1)$ and $(2a,a^2,-2a-1^2)$, which differ from the tuples in part (3) of the proposition by a permutation and change of variable $a\rightarrow -a$.
\item \textbf{$\overline{B}$ is isomorphic to a blowup of $\overline{M}_{0,5}$ at three points.} We will show that when at least two $m_i$'s are negative and two are non-negative then we cannot get smooth $\overline{B}$ with exactly three exceptional divisors. Suppose the contrary, and first assume two of the exceptional divisors are given by cherry graphs $\Gamma_1=\langle 5\,||\,1,4\,|\,2,3\rangle$ and $\Gamma_1'$ and the third is given by upside down cherry $\Gamma_2=\langle 2,5\,|\,3,4\,||\,1\rangle$; the case where two are given by upside down cherry will be analogous. As in part (3) of the proof, the $\Gamma_1'$ can be either $\langle 4\,||\,1,5\,|\,2,3\rangle$ or $\langle 5\,||\,2,4\,|\,1,3\rangle$. Consequently, the equality of enhancements of the two edges for both $\Gamma_1$ and $\Gamma_1'$ implies $m_1+m_4=m_2+m_3$ and $m_4=m_5$ or $m_1=m_2$. If $m_4=m_5$, swapping 4 and 5 in $\Gamma_2$ gives fourth exceptional divisor, whereas if $m_1=m_2$ then swapping 1 and 2 in $\Gamma_2$ will give the fourth exceptional divisor.

  On the other hand, if all three exceptional divisors are given by cherry graphs, then by \cref{realizable_implication} they must be $\Gamma_1=\langle 5\,||\,1,4\,|\,2,3\rangle$, $\Gamma_1'=\langle 4\,||\,1,5\,|\,2,3\rangle$ and $\Gamma_1''=\langle 5\,||\,2,4\,|\,1,3\rangle$, so as before, we obtain $m_1+m_4=m_2+m_3$, $m_4=m_5$ and $m_1=m_2$ (so $m_3=m_5$ as well). That is, $\mu=(a^2,-b^3)$ for $2a-3b=-2$; but this case will have six exceptional divisors.

  If all three exceptional divisors are given by upside down cherries, the analogous argument implies that there are six exceptional divisors.
\item \textbf{$\overline{B}$ is isomorphic to a blowup of $\overline{M}_{0,5}$ at four points.} In part (4) above, we saw that if $m_1,\,m_2\geq 0>m_4$, then existence of three realizable and balanced cherries implies there are six exceptional divisors. So, we assume two are given by cherries $\Gamma_1=\langle 5\,||\,1,4\,|\,2,3\rangle$ and $\Gamma_1'$ whereas two by upside down cherries$\Gamma_2=\langle 2,5\,|\,3,4\,||\,1\rangle$ and $\Gamma_2'$. Equating the enhancements of $\Gamma_1$ and $\Gamma_2$, we obtain $m_1+m_4=m_2+m_3$ and $m_2+m_5=m_3+m_4$. Now, as in part (3), $\Gamma_1'$ has two options: $\langle 4\,||\,1,5\,|\,2,3\rangle$ or $\langle 5\,||\,2,4\,|\,1,3\rangle$. Suppose $\Gamma_1'=\langle 4\,||\,1,5\,|\,2,3\rangle$ then as in part (3) of the proof, we get $m_4=m_5$ (so $m_2+m_5=m_3+m_4$ implies $m_2=m_3$). Put $m_2=m_3=a$, so $m_1=m_2+m_3-m_4=2a-m_4$, so that $\sum_im_i=-2$ implies $m_4=-4a-2$ and thus $m_1=6a+2$, that is, $\mu=(6a+2,a^2,-4a-2^2)$, which is the tuple in part (5) of the proposition after a permutation and change of variable $a\rightarrow -a$.

  If instead $\Gamma_1'=\langle 5\,||\,2,4\,|\,1,3\rangle$, similar argument would yield $\mu=(4a-2^2,-a^2,-6a+2)$.
\item Since the presence of three exceptional divisors represented by cherry graphs (or, three by upside down cherry graphs) implies $\overline{B}$ has six exceptional divisors, there can be no $\mu$ for which $\overline{B}$ is isomorphic to a blowup of $\overline{M}_{0,5}$ at exactly five points. And we already saw in part (4) that if $\overline{B}$ is isomorphic to a blowup of $\overline{M}_{0,5}$ at six points then $\mu=(a^2,b^3)$ with $2a+3b=-2$.
  \end{enumerate}
\end{proof}
\section{Integral cohomology when $\mu=(0^{n-1},-2)$ or $\mu=(0^{n-2},-1^2)$}\label{integral_cohomology}
In the previous section, we saw that when $n\geq 7$, $\mu=(0^{n-1},-2)$ and $(0^{n-2},-1^2)$ are the only two cases where the coarse moduli space $\overline{B}$ is smooth. In this section, we aim to prove that in these cases, the \textit{integral} cohomology ring is generated by the boundary divisors. Since the proofs in both cases are essentially identical, we will assume $\mu=(0^{n-1},-2)$ for simplicity.

\begin{remark}\label{multiscaled_lines}
  In addition to being one of the few cases where the moduli space is a smooth variety, the case of $\mu=(0^{n-1},-2)$ is of independent interest. The space of multiscaled lines with collision $A_{n-1}$ considered in \cite{ar24} shares a lot of structural similarities with  $\overline{B}$. Both spaces aim to parametrize meromorphic differentials on a genus 0 curve with unique pole of order 2 at one point. The boundaries of both spaces are stratified by level graphs that are rooted level trees, and the number of levels corresponds to the codimension of the boundary stratum. However, there are two main differences: the points are allowed to collide in $A_{n-1}$, but not in $\overline{B}$, and the top level vertex carries a bona fide meromorphic differential in $A_{n-1}$ but carries a differential only up to a scalar factor in $\overline{B}$. Nevertheless, we expect there to be some relationships and interplay between these spaces, which have been further investigated in \cite{drz25} after the current work was completed and submitted.
\end{remark}

Because of the smoothness of the moduli space, it is reasonable to expect the integral version of  \cref{main-thm} to hold in this case. However, the property \ref{p6} in the Section \ref{ckgp_overview} (and the references cited therein) holds only for rational cohomology. So, we will take a completely different approach to computing the cohomology ring of $\overline{B}$ -- by using the birational morphism $\overline{B}\rightarrow \overline{M}_{0,n}$ and factoring the morphism as a sequence of blow ups along smooth, reduced and connected subschemes.

To realize this factorization, we use the ordering of the boundary divisors (really, only those exceptional over $\overline{M}_{0,n}$) given in \cite{ccm22}. Recall that the partial ordering on the set of boundary divisors is defined as follows: $D_1 <D_2$ if $D_1\cap D_2$ is non-empty, and $D_1\cap D_2$ is a degeneration of the top level of $D_2$ and the bottom level of $D_1$. Heuristically, this ordering says that if $D_1<D_2$ then the bottom level of $D_1$ ``admits more degenerations than the bottom level of $D_2$''. Equivalently, we can describe this ordering by saying $D_1<D_2$ if $D_1\cap D_2\neq \emptyset$ and the lower level stratum of $D_1$ has higher dimension than that of $D_2$. We then extend this partial order to a total order in an arbitrary way, and enumerate the exceptional divisors: $D_1<D_2<\ldots<D_m$. Then we claim that the birational morphism $\pi:\overline{B}\rightarrow \overline{M}_{0,n}$ factors as \[\overline{B}=X_m\rightarrow X_{m-1}\rightarrow\ldots\rightarrow X_0=\overline{M}_{0,n}\]such that the birational morphism $X_i\rightarrow X_{i-1}$ has (the proper transform of) $D_i$ as the exceptional divisor, but is biregular otherwise.\\

Note that, when $\mu=(0^{n-1},-2)$, every codimension 1 enhanced level graph will consist of a unique vertex on the top level, where the marked point of order $-2$ lies, and at most $\left\lfloor \frac{n-1}{2}\right\rfloor$ nodes on the bottom level (and the graph can have no horizontal edge). Let us denote by $Z_i$ the (reduced) image of $D_i$ in $\overline{M}_{0,n}$. Then we have:
\begin{claim}
  If $Z_i\subset Z_j$, then $i\leq j$.
\end{claim}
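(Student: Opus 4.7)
The plan is to prove the stronger statement that for $\mu=(0^{n-1},-2)$ the containment $Z_i\subset Z_j$ actually forces $Z_i=Z_j$ and $D_i=D_j$, so that $i=j\le j$; thus the claim amounts to the fact that no two \emph{distinct} exceptional divisors compare under containment of their images in $\overline{M}_{0,n}$.

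The first step is to pin down the underlying dual graph of any exceptional $D_j$. A short degree count forces the top level of a codimension-$1$ enhanced level graph to consist of the single vertex $v_0$ carrying $p_n$: on any other vertex the marked-point orders sum to $0$, so the node half-edge orders would have to sum to $-2$, contradicting non-negativity at the top level. Moreover each bottom vertex has exactly one upward half-edge, because its orders sum to $-2$ and each upward half-edge already contributes $-2$. Hence the underlying stable graph is a \emph{star} centered at $v_0$ with $k_j\ge 2$ leaves, each leaf decorated with a non-empty subset of $\{1,\ldots,n-1\}$; the leaf subsets form a partition of $\{1,\ldots,n-1\}$. Since all enhancements are $1$, the level structure is determined, and each edge has a unique prong matching, so the enhanced level graph $D_j$ is reconstructed from this partition; the map $D_j\mapsto Z_j$ is therefore injective on exceptional divisors.

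The main step is the combinatorial observation that no non-trivial degeneration of a star is itself a star. Any such degeneration is obtained by a sequence of vertex splits; splitting a leaf $v_l$ introduces a new degree-$2$ vertex not adjacent to $v_0$, while splitting the center $v_0$ itself (only possible when $k_j\ge 3$, by stability) must leave each side with at least one of the original half-edges, so both new vertices have degree $\ge 2$. In either case the resulting tree has a non-leaf vertex distinct from $v_0$, so it is not a star, and iterating the splits preserves this defect. Coupled with the standard criterion that for closed boundary strata in $\overline{M}_{0,n}$ the containment $Z_i\subset Z_j$ holds if and only if the dual graph of $Z_i$ is a (possibly trivial) degeneration of that of $Z_j$, this forces the two stars to coincide; hence $Z_i=Z_j$, then by injectivity $D_i=D_j$, and finally $i=j$.

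The only genuine obstacle is the combinatorial claim that vertex splits of a star never produce a star, which requires careful bookkeeping of the stability constraints when splitting the center; everything else is an unpacking of definitions together with the rigidity of the enhanced-level-graph data in this particular $\mu$.
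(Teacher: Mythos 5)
There is a genuine gap: your ``main step'' --- that no non-trivial degeneration of a star is again a star --- is false, and with it the stronger statement you aim for ($Z_i\subset Z_j\Rightarrow Z_i=Z_j$) fails. When you split the center $v_0$, one of the two new vertices may receive only legs (together with the new edge); by stability it needs at least two legs, but its edge-valence is then $1$, so it is a new \emph{leaf} and the result is again a star centered at the other new vertex. Your bookkeeping conflates half-edges (which include legs) with edges: ``each side has at least one of the original half-edges'' does not give edge-valence $\geq 2$. Concretely, for $\mu=(0^{n-1},-2)$ with $n\geq 7$, take $D_j$ the two-level star with bottom vertices carrying $\{1,2\}$ and $\{3,4\}$, and $D_i$ the two-level star with bottom vertices $\{1,2\}$, $\{3,4\}$, $\{5,6\}$. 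Both are exceptional divisors of $\overline{B}$ (any two-level vertical graph has codimension one), they are distinct, and $Z_i\subsetneq Z_j$ in $\overline{M}_{0,n}$, since the three-leaf star is obtained from the two-leaf star precisely by splitting the center and moving the legs $5,6$ onto the new vertex. So nested images among distinct exceptional divisors genuinely occur, and any proof must use the chosen ordering rather than rule the situation out.

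The claim is really a statement about the partial order, and your argument never invokes it. The paper's proof is short but essential: if $Z_i\subset Z_j$, then $D_i\cap D_j\neq\emptyset$ and is represented by a three-level graph whose underlying dual graph is that of $Z_i$; the undegeneration keeping the top (0 to $-1$) level passage recovers $D_i$, while keeping the lower passage recovers $D_j$, which is exactly the condition $D_i\leq D_j$ in the partial order (the bottom level of $D_i$ admits the further degeneration), hence $i\leq j$ for any compatible total order. In the counterexample above this gives $i<j$ with $Z_i\subsetneq Z_j$, consistent with the claim but incompatible with your proposed strengthening. Your first step (the structure of codimension-one graphs as rooted stars, all enhancements equal to $1$, and injectivity of $D\mapsto Z$) is fine, but it cannot substitute for the comparison of the two divisors through their common degeneration.
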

\begin{proof}
  Indeed, if $Z_i\subset Z_j$, then $D_i\cap D_j$ is represented by a three level graph whose underlying dual graph is the same as that for $Z_i$. Thus, the level graph for $D_i$ is obtained from that for $D_i\cap D_j$ via undegeneration keeping level 0 to -1 passage and collapsing the level -1 to -2 passage. This, by the definition of the partial order, implies $i\leq j$.
\end{proof}
For the first step in the above factorization, $X_1$ will be given by the blow up of $\overline{M}_{0,n}$ along $Z_1$. For this, we need to show that the preimage of $Z_1$ in $\overline{B}$ is a Cartier divisor (in fact, exactly $D_1$). So, take $\alpha\in Z_1$, then $\pi^{-1}(\alpha)\subset\overline{B}$ is connected by Zariski's main theorem. If $\pi^{-1}(\alpha)$ is not contained in $D_1$, then there is a component $F_0$ of the fiber not contained in $D_1$, but intersects $D_1$ non-trivially. A generic element of $F_0$ is associated to a level graph $\Delta$ that is not a degeneration of $\Gamma_1$ (the level graph for $D_1$), but the intersection $F_0\cap D_1$ will have generic element given by a level graph $\Lambda$ with same underlying dual graph as $\Delta$, such that as a level graph $\Lambda$ is a degeneration of $\Gamma_1$. Because of our ordering of boundary divisors, $\Lambda$ is obtained from $\Gamma_1$ by degenerating the lower level of $\Gamma_1$ only; since $\Lambda$ and $\Delta$ have the same top level (the root vertex), and undegenerating/consolidating all levels below the top of $\Lambda$ to one level yields $\Gamma_1$, same should be true for $\Delta$, thereby implying $F_0\subset D_1$, so that $\pi^{-1}(Z_1)=D_1$.

Since the preimage of $Z_1$ in $\overline{B}$ is a Cartier divisor, by the universal property of blow up, the morphism $\overline{B}\rightarrow\overline{M}_{0,n}$ factors through $X_1$. In the next step, we would like to blow up the proper transform of $Z_2$ in $X_1$ to obtain $X_2$. And indeed, this is possible -- since $Z_2$ is not contained in $Z_1$, its proper transform in $X_1$ is well defined, and since the preimage of $Z_2$ is again a Cartier divisor in $\overline{B}$, the morphism $\overline{B}\rightarrow X_1$ factors through $X_2\coloneqq \text{Bl}_{Z_2}(X_1)$ by the universal property of blow up (where for brevity, the proper transform of $Z_2$ in $X_1$ is also denoted by $Z_2$).

Continuing this way until we exhaust all the exceptional divisors in $\overline{B}$, we obtain in the end a smooth projective variety $X_m$ and a birational morphism $\overline{B}\rightarrow X_m$. But this birational morphism has no exceptional divisor, and smoothness of $X_m$ excludes the possibility of the exceptional locus being small (see \cite{km98}), thus the morphism has to be an isomorphism. Thus we have factorized the birational morphism $\overline{B}\rightarrow\overline{M}_{0,n}$ as a sequence of blow ups along a smooth connected scheme at each step.\\

The cohomology ring of a blowup can be described using Theorem 7.31 of \cite{voisin}:
\begin{theorem}[see e.g. \cite{voisin}, Thm. 7.31]\label{cohomology_blowup}
  Let $X$ be a K\"ahler manifold and $Z\subset X$ a submanifold. Denote by $\tau:\tilde{X}\rightarrow X$ the blow up of $X$ along $Z$ and by $E=\tau^{-1}(Z)$ the exceptional divisor (which is a projective bundle over $Z$). Let $h=c_1(\mathcal{O}_E(1))\in H^2(E,\mathbb{Z})$ be the first chern class of the tautological line bundle of $E$. Then we have isomorphism of the cohomology rings:
  \[H^k(X,\mathbb{Z})\oplus\left(\bigoplus_{i=0}^{r-2}H^{k-2i-2}(Z,\mathbb{Z})\right)\xrightarrow{\tau^*+\sum_ij_*\circ h^i\circ \tau|_E^*} H^k(\tilde{X},\mathbb{Z}),\]where $j:E\hookrightarrow\tilde{X}$ is the inclusion morphism.
\end{theorem}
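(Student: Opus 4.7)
The plan is to reduce the statement to a comparison of long exact sequences of pairs, exploiting two structural facts: $\tau$ is a biholomorphism away from $E$, and $E = \mathbb{P}(N_{Z/X})$ is a projective bundle of relative dimension $r-1$ over $Z$ (where $r$ is the complex codimension of $Z$ in $X$). The projective bundle formula (Leray--Hirsch) gives $H^*(E, \mathbb{Z}) \cong \bigoplus_{i=0}^{r-1} h^i \cdot (\tau|_E)^* H^*(Z, \mathbb{Z})$ as $H^*(Z, \mathbb{Z})$-modules, and this will be the source of the ``extra'' summands in the statement.

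Next, I would write the long exact sequences of the pairs $(\tilde X, \tilde X \setminus E)$ and $(X, X \setminus Z)$ and compare them via $\tau$. The biregular isomorphism $\tilde X \setminus E \simeq X \setminus Z$ identifies the complement terms, while the Thom isomorphism (using the canonical complex orientations of $N_{E/\tilde X}$ and $N_{Z/X}$) gives $H^k(\tilde X, \tilde X \setminus E) \cong H^{k-2}(E, \mathbb{Z})$ and $H^k(X, X \setminus Z) \cong H^{k-2r}(Z, \mathbb{Z})$. Under these identifications, the map $\tau^*$ between the relative groups corresponds to multiplication by $h^{r-1}$ followed by $(\tau|_E)^*$: concretely, since $\tau|_E^* N_{Z/X} = \mathcal{O}_E(-1) \oplus Q$ for the tautological quotient $Q$, pulling back the Thom class of $N_{Z/X}$ yields the Thom class of $N_{E/\tilde X}$ capped with $c_{r-1}(Q)$, and one checks $c_{r-1}(Q)$ is the Segre-inverted class whose image in $H^*(E)$ lands in the top summand $h^{r-1}\cdot(\tau|_E)^*H^*(Z)$.

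A diagram chase via the five lemma then shows that the cokernel of $\tau^*: H^k(X, \mathbb{Z}) \to H^k(\tilde X, \mathbb{Z})$ is isomorphic to the complementary direct sum $\bigoplus_{i=0}^{r-2} h^i \cdot (\tau|_E)^* H^{k-2i-2}(Z, \mathbb{Z})$, and one verifies directly that these complementary classes are hit by the map $\sum_i j_* \circ h^i \circ (\tau|_E)^*$. For injectivity of $\tau^*$, I would invoke the projection formula together with $\tau_* \tau^* = \mathrm{id}$ (valid because $\tau$ is a proper birational morphism between smooth varieties, so $R\tau_* \mathbb{Z}_{\tilde X} = \mathbb{Z}_X$). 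To conclude that the whole map is an isomorphism, I would check that the sum decomposition is actually a direct sum on the right-hand side: any relation among the $j_* h^i (\tau|_E)^* \alpha_i$ and $\tau^* \beta$ can be detected by applying $\tau_*$ and iteratively $j^*$, using the push-pull identity $\tau_* j_* = i_* (\tau|_E)_*$ and the Segre class formulas $(\tau|_E)_* h^i = s_{i-r+1}(N_{Z/X})$, which together yield a triangular system in the $\alpha_i$.

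The main technical obstacle is handling everything integrally rather than rationally: the Thom isomorphism and the projective bundle formula both require torsion-free input, which here is supplied by the complex orientations of $N_{Z/X}$ and the tautological line bundle $\mathcal{O}_E(1)$; and the five lemma step must be carried out in the category of abelian groups rather than $\mathbb{Q}$-vector spaces, so one cannot invoke dimension-counting shortcuts. Once the compatibility of Thom classes under $\tau$ is pinned down integrally, the rest of the argument is formal.
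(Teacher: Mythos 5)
This statement is not proved in the paper at all: it is quoted verbatim from the reference (\cite{voisin}, Thm.\ 7.31) and used as a black box, so there is no internal proof to compare against. Your argument is essentially the standard proof of that theorem (and close in spirit to the cited source): compare the long exact sequences of the pairs $(\tilde X,\tilde X\setminus E)$ and $(X,X\setminus Z)$ via $\tau$, use the Thom isomorphisms (valid integrally since the normal bundles are complex, hence oriented), the Leray--Hirsch decomposition of $H^*(E)$, and the key formula that under the Thom identifications $\tau^*$ on relative groups becomes $\alpha\mapsto c_{r-1}(Q)\cup(\tau|_E)^*\alpha$, whose triangularity in $h$ yields both the splitting and injectivity. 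Two points should be repaired. First, your justification of $\tau_*\tau^*=\mathrm{id}$ is wrong as stated: $R\tau_*\mathbb{Z}_{\tilde X}\neq\mathbb{Z}_X$ for a blow-up (the higher direct images $R^{2i}\tau_*\mathbb{Z}$, $1\le i\le r-1$, are nonzero and supported on $Z$); the correct argument is simply the projection formula together with $\tau_*(1)=1$, since $\tau$ is a degree-one proper map of compact oriented manifolds. Second, $c_{r-1}(Q)=\sum_{i=0}^{r-1}h^i\,(\tau|_E)^*c_{r-1-i}(N_{Z/X})$ does \emph{not} lie in the top summand $h^{r-1}(\tau|_E)^*H^*(Z)$; what matters is that it is monic of degree $r-1$ in $h$, equivalently $(\tau|_E)_*c_{r-1}(Q)=1$, which gives the split injectivity on the relative terms and the triangular system you invoke at the end. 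With those corrections the argument is complete and is the standard one.
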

So we have:
\begin{proposition}
  The ring $H^*(\mathbb{P}\Xi\overline{M}_{0,n}(-2,0^{n-1}),\mathbb{Z})$ is generated by the boundary divisors.
\end{proposition}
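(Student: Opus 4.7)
The plan is to proceed by induction along the blowup tower
\[\overline{B} = X_m \to X_{m-1} \to \cdots \to X_0 = \overline{M}_{0,n}\]
constructed above, where $X_i \to X_{i-1}$ is the blowup along the proper transform $\widetilde Z_i$ of $Z_i$ with exceptional divisor $E_i$. The base case is Keel's theorem, which says $H^*(\overline{M}_{0,n},\mathbb Z)$ is generated by boundary divisors. For the inductive step I would assume $H^*(X_{i-1},\mathbb Z)$ is generated by pullbacks of boundary divisors of $\overline{M}_{0,n}$ together with the exceptional divisors $E_1,\ldots,E_{i-1}$, and then deduce the analogous statement for $X_i$.

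By \cref{cohomology_blowup}, every class in $H^*(X_i,\mathbb Z)$ is a sum of $\tau^*\beta$ (handled by the inductive hypothesis) and terms $j_*(h^k\cdot \tau|_{E_i}^*\alpha)$ with $\alpha\in H^*(\widetilde Z_i,\mathbb Z)$. The key algebraic identity is $j^*E_i = -h$, which combined with the projection formula gives $j_*(h^k) = (-1)^k E_i^{k+1}$, and more generally
\[j_*\bigl(h^k\cdot j^*\gamma\bigr) = (-1)^k E_i^{k+1}\cdot \gamma\]
for any $\gamma\in H^*(X_i,\mathbb Z)$. So if $\alpha$ is the restriction of some $\widetilde\alpha\in H^*(X_{i-1},\mathbb Z)$ under $\iota\colon\widetilde Z_i\hookrightarrow X_{i-1}$, then $\tau|_{E_i}^*\alpha = j^*\tau^*\widetilde\alpha$ and the class becomes $(-1)^k E_i^{k+1}\cdot \tau^*\widetilde\alpha$, a polynomial in divisor classes by induction. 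The inductive step thus reduces to showing that the restriction map $H^*(X_{i-1},\mathbb Z)\to H^*(\widetilde Z_i,\mathbb Z)$ is surjective.

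The main obstacle will be proving this surjectivity. The original stratum $Z_i\subset\overline{M}_{0,n}$ is a product of factors $\overline{M}_{0,k_j}$, whose integral cohomology is generated by boundary divisors (Keel), and each such divisor of $Z_i$ is the trace of a boundary divisor of $\overline{M}_{0,n}$, so $H^*(Z_i,\mathbb Z)$ itself lifts to $\overline{M}_{0,n}$. To handle the proper transform, I would note that $\widetilde Z_i$ is obtained from $Z_i$ by blowing up, in order, those intersections $Z_j\cap Z_i$ with $j<i$ and $Z_j\subsetneq Z_i$. Each such $Z_j\cap Z_i$ is again a boundary stratum of $Z_i$, smooth and irreducible by \cref{irreducible_boundary}, and itself a product of $\overline{M}_{0,k}$'s; the resulting exceptional divisors in $\widetilde Z_i$ are the traces of the corresponding $E_j$. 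A nested induction on dimension, parallel to the outer argument, then shows $H^*(\widetilde Z_i,\mathbb Z)$ is generated by restrictions of divisor classes from $X_{i-1}$, which closes the outer induction. The delicate point throughout will be verifying smoothness and connectedness of the intermediate centers at each stage, which should follow from the product structure of boundary strata in $\overline{M}_{0,n}$ together with the irreducibility results of \cref{irred_bdry}.
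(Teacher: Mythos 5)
Your proposal follows essentially the same route as the paper: induct along the same blowup tower, apply the blowup formula with the identity $h=-E|_E$ and the projection formula, and reduce everything to the statement that the cohomology of each (proper-transform) center is generated by restrictions of boundary divisor classes — which is exactly the auxiliary induction hypothesis the paper carries along (its hypothesis that every intersection of boundary divisors in $X_k$ has cohomology generated by restricted boundary divisors), proved there by the same ``parallel'' induction you sketch. The argument is correct; the only cosmetic difference is that the paper phrases the nested step by representing classes on $Z_k$ as explicit intersections of boundary divisors and using flatness of $\tau|_E$, rather than by describing $\widetilde Z_i$ as an iterated blowup of a product of smaller $\overline{M}_{0,k}$'s.
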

\begin{remark}
  The Chow ring and the cohomology ring of $\overline{B}$ are isomorphic by Theorem 2 of \cite[Appendix]{keel} (note that, at every step, we are blowing up the proper transform of a boundary stratum in $\overline{M}_{0,n}$, which remain ``Homology Isomorphism schemes'', using the terminology of \cite[Appendix]{keel}). In particular, all the odd cohomology groups vanish.
\end{remark}
\begin{proof}
  We apply \cref{cohomology_blowup} for $X=X_k,\ \tilde{X}=X_{k+1}$ and $Z=Z_k$ the center of the blow up for that step. Denote by $\tau:X_{k+1}\rightarrow X_k$ the birational morphism. Then, to apply the theorem, we need to prove that for any cohomology class $\alpha$ in $X_k$, its pullback $\tau^*\alpha$ is generated by the intersections of the boundary divisors and that for any cohomology class $\alpha$ in $Z_k$, the class $j_*\circ h^i\circ\tau|_E^*\alpha$ is generated by the intersections of the boundary divisors, where $E$ is the exceptional divisor of the blow up $\tau:X_{k+1}\rightarrow X_k$, $j$ is the inclusion of $E$ into $X_{k+1}$ and $h=c_1(\mathcal{O}_E(1))$ is the first chern class of the tautological bundle on $E$ (viewed as the projectivized normal bundle over $Z_k$). We will use induction on $k$ to prove this. As induction hypothesis, we will assume:
  \begin{enumerate}[(i)]
  \item The cohomology ring of $X_k$ is generated by boundary divisors (which include the exceptional divisors over $\overline{M}_{0,n}$ as well as the proper transform of the boundary divisors on $\overline{M}_{0,n}$),
  \item The boundary divisor of $X_k$ is a simple normal crossing divisor and the intersection of any collection of the irreducible components of the boundary is irreducible,
  \item For any intersection $Z$ of boundary divisors in $X_k$, the cohomology ring of $Z$ is also generated by boundary divisors (that is, the pullback of boundary divisors on $X_k$ to $Z$).
  \end{enumerate}
  The three statements are true for $\overline{M}_{0,n}$ (see \cite{keel} or \cite[Chap. XVII-7]{acg2}), so the base of induction is valid. Now we proceed with the inductive step. Take a cohomology class $\alpha$ represented by a codimension $p$ subvariety $W\subset X_k$ that is an intersection of boundary divisors in $X_k$. Write $W=D_1\cap \ldots \cap D_p$ for some boundary divisors $D_i\subset X_k$. First assume $W$ is not contained in $Z_k$. Then by the projection formula, the cohomology class $\tau^*\alpha$ differs from the cohomology class of the proper transform $\overline{W}$ of $W$ by a class supported in the exceptional divisor $E$ whose Gysin pushforward is 0. We have that $\overline{W}=\overline{D}_1\cap\ldots\cap \overline{D}_p$, where $\overline{D}_i$ is the proper transform of $D_i$, so to show cohomology ring of $X_{k+1}$ is also generated by boundary divisors, it is enough to show that the image of $j_*\circ h^i\circ\tau|_E^*$ is also generated by boundary divisors.
  
  To this end, we take a cohomology class $\alpha$ in $Z_k$ represented by the intersection $W$ of boundary divisors $D_1^k\cap\ldots\cap D_m^k\cap Z_k$. Without loss of generality, we can assume that none of $D_i^k$ contains $Z_k$ (or equivalently, if $Z_k=\Delta_1\cap\ldots\cap \Delta_r$ for boundary divisors $\Delta_i\subset X_k$ then none of $\Delta_i$ is equal to $D_j^k$); this particularly means that $D_i^k$ and $Z_k$ intersect transversally, so the preimage of $D_i^k$ in $X_{k+1}$ coincides with its proper transform $D_i^{k+1}$. Then since $\tau|_E$ is flat, the cohomology class $\tau|_E^*\alpha$ is represented by $\tau|_E^{-1}(W)$ which is equal to the intersection $D_1^{k+1}\cap\ldots\cap D_m^{k+1}\cap E$, that is, it is generated by boundary divisors. And since $h=-E|_E=j^*(-E)$, by projection formula, it follows that $j_*\circ h^i\circ\tau|_E^*\alpha$ is represented by $(-E)^i\cdot D_1^{k+1}\cdot\ldots\cdot D_m^{k+1}$, as desired. Thus, we have proven the statement (i) for $X_{k+1}$. The second statement (ii) is a standard fact about simple normal crossing divisor and blowups. The statement (iii) follows via argument similar to that for $X_{k+1}$ -- the map $\tau$ restricted to each such subvariety $Z$ is again a blow up of a smooth variety along smooth subvariety, and the situation is completely parallel. This completes the proof.
\end{proof}
If $\mu=(0^{n-2},-1^2)$ then also all the compatible divisorial level graphs that can appear are either a rooted level tree or horizontal. The horizontal boundary divisor is not exceptional over $\overline{M}_{0,n}$, so the exceptional divisors in $\overline{B}$ can still be given the same partial ordering as we did above, which can then be used to give a step by step blow up construction of $\overline{B}$ from $\overline{M}_{0,n+2}$ as above. So, the same argument as above yields
\begin{proposition}
  The ring $H^*(\mathbb{P}\Xi\overline{M}_{0,n}(0^{n-2},-1^2),\mathbb{Z})$ is generated by the boundary divisors.
\end{proposition}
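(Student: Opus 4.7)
The plan is to mimic the argument given for $\mu=(0^{n-1},-2)$, exploiting the observation (noted just before the statement) that every divisorial enhanced level graph compatible with $\mu=(0^{n-2},-1^2)$ is either a horizontal divisor (one horizontal edge joining two genus $0$ components, with the two marked points of order $-1$ distributed on the two sides) or a rooted level tree with exactly one top-level vertex containing the two marked points of order $-1$. The horizontal divisor maps birationally onto a boundary divisor of $\overline{M}_{0,n}$ and is therefore \emph{not} exceptional for $\pi:\overline{B}\rightarrow\overline{M}_{0,n}$. Thus the exceptional divisors of $\pi$ are exactly those vertical divisors whose level graph is a rooted level tree with the two $-1$ legs on the top-level vertex; the qualitative picture is identical to the $(0^{n-1},-2)$ case.

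First I would equip the set of exceptional divisors with the same partial order used in the previous proof ($D_i<D_j$ if $D_i\cap D_j\neq\emptyset$ and the lower-level stratum of $D_i$ has strictly larger dimension than that of $D_j$) and extend it to a total order $D_1<\cdots<D_m$. Next I would verify the analogue of the key Claim: if $Z_i\subset Z_j$ (with $Z_i$ the reduced image of $D_i$ in $\overline{M}_{0,n}$), then $i\leq j$. Because each $D_i$ is a rooted level tree with the distinguished pair of marked points fixed at the top, the intersection $D_i\cap D_j$ is a three-level graph whose underlying dual graph is that of $Z_i$, and undegenerating its top-most level passage recovers $\Gamma_i$; this is the same combinatorics as in the $(0^{n-1},-2)$ case, and the ordering argument carries over verbatim. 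Crucially, no horizontal divisor ever appears in this analysis, so the horizontal locus is harmless.

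Next I would produce the factorization $\overline{B}=X_m\to X_{m-1}\to\cdots\to X_0=\overline{M}_{0,n}$ by inductive blowups along $Z_k\subset X_{k-1}$. For each step I need $\pi^{-1}(Z_k)$ (or rather the total transform of $Z_k$ in $\overline{B}$) to be a Cartier divisor equal to $D_k$. The argument given in the previous proof applies unchanged: for $\alpha\in Z_k$, Zariski's main theorem makes $\pi^{-1}(\alpha)$ connected, and the ordering forces any component of the fiber that meets $D_k$ to be contained in $D_k$, since a level graph $\Delta$ whose top level agrees with that of $\Gamma_k$ and which degenerates to a graph $\Lambda$ obtained from $\Gamma_k$ by degenerating the lower level only must itself be such a degeneration. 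The universal property of blowups then produces the factorization, and after the last step the resulting birational map $\overline{B}\to X_m$ has no exceptional divisor, hence is an isomorphism by smoothness of $X_m$ (invoking \cite{km98}).

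Finally, I would run the induction on $k$ exactly as in the preceding proposition, applying \cref{cohomology_blowup} at each blowup $X_{k+1}=\mathrm{Bl}_{Z_k}X_k$, with inductive hypothesis that (i) $H^*(X_k,\Z)$ is generated by boundary divisors, (ii) the boundary of $X_k$ is simple normal crossing with irreducible multi-intersections, and (iii) the same holds for every intersection of boundary divisors inside $X_k$. The base case $X_0=\overline{M}_{0,n+2}$ is Keel's theorem \cite{keel}. The pullback term $\tau^*\alpha$ is handled by proper transform plus a correction supported on the exceptional divisor, and the terms $j_*\circ h^i\circ\tau|_E^*$ are handled by the identity $h=j^*(-E)$ and the projection formula, giving classes expressible as products of boundary divisors; the irreducibility of intersections (\cref{irreducible_boundary}) ensures condition (ii) is maintained at every stage. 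The one spot needing mild extra care is the horizontal boundary divisor, which must be included as a ``boundary divisor'' of every $X_k$ from the outset (it is already such in $\overline{M}_{0,n}$, viewed as the preimage of the corresponding divisor in $\overline{M}_{0,n}$). The main obstacle, such as it is, is confirming that including the horizontal divisor does not disturb the transversality and irreducibility conditions at each intermediate blowup; this is ensured because the centers $Z_k$ meet the horizontal locus transversally, a consequence of the fact that the level graphs parametrizing $Z_k$ and the horizontal divisor share no common degenerations beyond their transverse intersection, again a direct corollary of \cref{boundary_intersection} and \cref{unique_graph}.
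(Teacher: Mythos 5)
Your proposal matches the paper's own treatment: the paper disposes of this case by exactly the observation you make — that the horizontal divisor is not exceptional over $\overline{M}_{0,n}$, so the exceptional divisors are again rooted level trees that can be given the same partial order and blown up step by step, after which the induction with the blowup formula runs verbatim. Your additional remarks on transversality with the horizontal locus are consistent with (and slightly more detailed than) the paper's one-line argument, so the proposal is correct and essentially identical in approach.
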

\begin{remark}
  Even though we stated the preceding proposition only for $\mu=(0^{n-1},-2)$ and $(0^{n-2},-1^2)$, analogous ideas should be applicable for all the other smooth cases described in \cref{smooth_eq6} and \cref{smooth_nequals5} as well. In all those cases, the exceptional divisors were represeneted by either cherry graphs or upside down cherry graphs. In each of those cases, we can check that a cherry divisor and an upside down cherry divisor are always disjoint, so we can consider the same partial ordering on the set of cherry divisors as we did above and a different partial ordering on the upside down cherry divisor obtained instead by saying $D_1<D_2$ if the \textit{top} level of $D_1$ has higher dimension than the \textit{top} level of $D_2$. Because of the disjointness, we can blowup the centers of the cherry divisors first and then blowup the centers of the upside down cherry divisors in the order described above, and follow the same argument.
\end{remark}
\printbibliography
\end{document}